\documentclass[11pt]{article}

\usepackage{amsmath,amssymb,amsthm,amsfonts,mathrsfs,mathtools}
\usepackage[a4paper,margin=1in]{geometry}
\usepackage{enumitem}
\usepackage{graphicx}
\usepackage{hyperref}
\usepackage{cite}
\usepackage{array}
\usepackage{booktabs}
\usepackage{longtable}

\usepackage{mathrsfs}
\usepackage{graphicx}
\usepackage{float}
\usepackage[font=small]{caption}

\usepackage[font=footnotesize,labelfont=bf]{caption}

\usepackage{microtype}
\numberwithin{equation}{section}

\newtheorem{definition}{Definition}[section]
\newtheorem{theorem}[definition]{Theorem}

\newtheorem{remark}[definition]{Remark}

\newtheorem{example}[definition]{Example}

\begin{document}

\begin{center} {\Large\bfseries 
Dynamic–Memory Fractional Calculus via Generator–Based Memory Construction:
Operational Theory, Semigroup Structure, and Applications
 \par} \vspace{0.8em} 

{\large
Jehad Alzabut\footnote{Corresponding author: \texttt{jalzabut@psu.edu.sa}} 

\par}

\vspace{0.3em}

{\footnotesize
Department of Mathematics and Sciences, Prince Sultan University, Riyadh-11586, Saudi Arabia\\
Department of Industrial Engineering, OSTIM Technical University, 06374 Ankara, T\"urkiye
\par}
\end{center} 

\begin{abstract}
Most existing generalized fractional operators are constructed from prescribed memory kernels, thereby restricting hereditary behavior to predefined forms and often lacking a unified mechanism that preserves operational consistency while accommodating diverse memory structures. Motivated by these limitations, this paper develops a new generator--based framework for fractional calculus in which memory laws are not imposed \emph{a priori} but are systematically generated through a dynamic memory generator $\Phi$ in the Laplace domain.

The proposed construction yields dynamic--memory kernels through inverse Laplace transforms, leading naturally to generalized dynamic--memory fractional integrals together with Riemann--Liouville and Caputo dynamic--memory fractional derivatives. Within this setting, several analytical and operational properties are established, including linearity, inverse relations, composition formulas, admissibility conditions, semigroup structures, and consistency principles. A unified convolution--symbol operational calculus is developed, accompanied by generalized dynamic--memory Mittag--Leffler functions and explicit formulas for the dynamic--memory fractional derivatives of power and polynomial functions.

Unlike many generalized formulations based on fixed kernels, the proposed framework provides a symbolic mechanism capable of generating singular, nonsingular, tempered, logarithmic, oscillatory, crossover, and multiscale hereditary behaviors within a single analytical structure. Moreover, numerous classical and modern fractional operators are recovered as special cases through suitable choices of the generator $\Phi$, demonstrating the unifying capability of the theory.

Finally, the applicability of the framework is illustrated through a nonlinear dynamic--memory fractional Langevin system involving coupled memory interactions and generalized hereditary effects. The obtained results suggest that the proposed approach provides a new strategy for constructing, analyzing, and extending fractional models beyond conventional kernel--prescribed formulations.
\medskip

\noindent
\textbf{Keywords:}
Dynamic--memory fractional calculus;
memory generators;
convolution kernels;
generalized Mittag--Leffler functions;
semigroup structure;
fractional operational calculus;
nonlocal dynamical systems.

\noindent
\textbf{MSC 2020:}
26A33 (primary), 44A10, 34A08, 47D06, 47G20.

\end{abstract}

\begin{table}[h]
\raggedright
\caption*{\textbf{List of Abbreviations}}

\renewcommand{\arraystretch}{0.9}
\setlength{\tabcolsep}{5pt}
\small

\begin{tabular}{ll}
\hline
\textbf{Abbreviation} & \textbf{Meaning} \\
\hline
$\mathcal{DMG}$ & Dynamic Memory Generator \\
$\mathcal{DMK}$ & Dynamic--Memory Kernel(s) \\
$\mathcal{DMFI}$ & Dynamic--Memory Fractional Integral \\
$\mathcal{DMFD}$ & Dynamic--Memory Fractional Derivative \\
$\mathcal{DMFC}$ & Dynamic--Memory Fractional Calculus \\
$\mathcal{DMFF}$ & Dynamic--Memory Fractional Framework \\
$\mathcal{RL\;DMFD}$ & Riemann--Liouville Dynamic--Memory Fractional Derivative \\
$\mathcal{C\;DMFD}$ & Caputo Dynamic--Memory Fractional Derivative \\
$\mathcal{DMFO}$ & Dynamic--Memory Fractional Operator \\
$\mathcal{DMMLF}$ & Dynamic--Memory Mittag--Leffler Function \\
$\mathcal{DMFLS}$ & Dynamic--Memory Fractional Langevin System \\

\hline
\end{tabular}
\end{table}


\section{Introduction}

Fractional calculus has become a widely used mathematical framework for modeling systems with memory and hereditary effects. It has found applications in physics, engineering, biology, viscoelasticity, control theory, and complex dynamical systems \cite{Podlubny1999}. To better describe different types of memory behaviors, many generalized fractional operators have been developed over recent decades \cite{Kilbas2006}.

Among the commonly used operators in fractional calculus are the classical Riemann--Liouville and Caputo fractional derivatives, which have been widely applied in models involving memory and hereditary effects. These operators are associated with singular power--law kernels and may not always be suitable for describing more complex memory behaviors observed in applications. To address this limitation, several generalized fractional operators have been introduced, including operators with nonsingular kernels, tempered memory laws, Mittag--Leffler kernels, and other generalized memory structures \cite{CaputoFabrizio2015,AtanganaBaleanu2016,Baleanu2012,Diethelm2010,Garra2021,Giusti2020}.

In addition, several fractional frameworks based on convolution kernels have been developed as extensions of classical power--law memory operators. One such direction involves fractional operators associated with Sonine kernels and convolution structures. Kochubei \cite{Kochubei2011} studied a class of fractional operators generated by Sonine kernels and considered applications to evolution equations, relaxation processes, and renewal models. Later, Luchko \cite{Luchko2021} developed a fractional calculus based on Sonine kernels and established results concerning fractional integrals, derivatives, inverse relations, and operational properties for arbitrary--order convolution operators. Related studies on convolution--type fractional operators and Sonine kernel structures can also be found in \cite{LuchkoYamamoto2017,Tarasov2019}.

Another direction in fractional calculus involves operators associated with Mittag--Leffler kernels, which provide alternatives to classical power--law memory structures. In this context, Prabhakar \cite{Prabhakar1971} introduced integral operators involving generalized Mittag--Leffler functions in the kernel, extending the classical Riemann--Liouville setting. These operators were later investigated by Kilbas, Saigo, and Saxena \cite{KilbasSaigoSaxena2004}, who established several analytical properties for Mittag--Leffler-type fractional operators. Further studies on generalized Mittag--Leffler functions and related operational calculus were carried out by Tomovski, Hilfer, and Srivastava \cite{Tomovski2010}. Related approaches also include convolution--based and arbitrary--kernel formulations, where memory effects are described through general convolution structures rather than fixed power--law kernels \cite{LorenzoHartley2000,Samko1993}.

More recently, several generalized fractional operators have been introduced in order to provide greater flexibility in modeling complex memory effects and heterogeneous dynamical behaviors. In particular, Khalil et al. \cite{Khalil2014} proposed the conformable fractional derivative as a local formulation preserving several properties of classical differentiation, while Abdeljawad \cite{Abdeljawad2015} further developed conformable fractional calculus and investigated its analytical properties. Almeida \cite{Almeida2017} introduced fractional derivatives and integrals with respect to another function, thereby extending classical formulations through generalized kernel structures and variable transformations. Jarad et al. \cite{JaradAbdeljawadAlzabut2017} proposed generalized proportional fractional operators incorporating scaling effects and broader classes of memory behavior. Other developments include Katugampola--type formulations and generalized kernel--based approaches designed to enhance the flexibility of memory modeling and operational analysis \cite{Katugampola2014,OrtigueiraMachado2015}.

Although the above--mentioned studies provide broad and powerful extensions of fractional calculus, several important challenges remain. Most existing generalized operators are constructed from prescribed kernels and therefore describe fixed memory laws. Consequently, the hereditary behavior is typically imposed \textit{a priori} rather than generated systematically. Moreover, the semigroup property is not automatically preserved for many generalized kernel--based formulations, which complicates the development of a consistent operational calculus. In addition, several existing approaches are tailored to specific classes of kernels and therefore do not offer a unified mechanism capable of simultaneously recovering singular, nonsingular, tempered, logarithmic, oscillatory, and adaptive memory structures within a single framework. The interplay between generalized kernels, Laplace--transform representations, and associated Mittag--Leffler--type functions also remains only partially understood.

Motivated by these limitations, the objective of the present work is to develop a unified dynamic--memory framework for fractional calculus generated through a $\mathcal{DMG}$
\(
\Phi(p,\Theta),
\)
where $p$ denotes the Laplace--transform variable and $\Theta$ represents a collection of modulation parameters governing the memory behavior. Unlike several existing generalized operators, the proposed theory does not start from a prescribed convolution kernel. Instead, the kernel itself is generated through the relation
\[
\Psi_\alpha(t;\Theta)
=
\mathcal L^{-1}
\left\{
\Phi(p,\Theta)^{-\alpha}
\right\}(t),
\]
which provides a systematic mechanism for constructing broad classes of memory--dependent fractional operators.

The proposed framework differs from existing convolution--type fractional calculus in several essential aspects:
\begin{itemize}[itemsep=1pt, topsep=1pt, parsep=0pt, partopsep=0pt]

\item
The memory law is generated through a $\mathcal{DMG}$ rather than being prescribed by a fixed kernel, allowing the memory behavior to be directly modulated through the parameters $\Theta$.

\item
The generated kernels naturally satisfy the semigroup relation
\[
\Psi_\alpha * \Psi_\beta
=
\Psi_{\alpha+\beta},
\]
whenever the inverse Laplace transforms exist, thus yielding a consistent operational structure without requiring additional compatibility conditions.

\item
The framework provides a unified memory--generation mechanism capable of recovering numerous classical and modern fractional operators through suitable choices of the $\mathcal{DMG}$.

\end{itemize}
Therefore, the novelty of the present work lies not merely in defining another generalized fractional derivative, but rather in constructing a generator--based $\mathcal{DMFF}$ that systematically produces families of fractional integrals, derivatives, operational identities, inverse relations, Laplace--transform formulas, power law and generalized Mittag--Leffler--type kernels within a unified setting.

\medskip

The main contributions of this work are summarized as follows:

\medskip

\begin{enumerate}[label=(\roman*),itemsep=1pt,topsep=1pt,parsep=0pt,partopsep=0pt]

\item
We introduce a $\mathcal{DMG}$ framework and develop the corresponding mechanism for generating memory kernels and associated fractional operators.

\item
We formulate generalized $\mathcal{DMFI}$ together with Riemann--Liouville and Caputo-type derivatives, and establish their fundamental analytical properties, including linearity, boundedness, inverse relations and composition formulas.

\item
We derive a consistent operational structure through semigroup properties, Laplace-transform representations, power law and generalized $\mathcal{DMMLF}$.

\item
We demonstrate that the proposed $\mathcal{DMFF}$ naturally recovers and unifies several classical and modern fractional operators through suitable choices of the $\mathcal{DMG}$.

\item
We illustrate the applicability of the developed theory through a nonlinear $\mathcal{DMFLS}$ involving coupled memory effects.

\end{enumerate}
Overall, the proposed framework introduces a new strategy for fractional modeling through dynamic memory generators and generated kernels, extending beyond conventional approaches where the memory law is prescribed in advance. This viewpoint offers additional flexibility for constructing and analyzing diverse memory behaviors within a unified setting.

The remainder of this paper is organized as follows. Section 2 introduces the main definitions of the proposed dynamic--memory framework, including the dynamic memory generator, generated kernels, and the associated fractional operators. Section 3 develops the analytical and operational theory, establishing structural properties, admissibility conditions, semigroup relations, consistency principles, Laplace representations, generalized Mittag--Leffler functions, and formulas for power and polynomial functions. Section 4 illustrates the applicability of the developed framework through a nonlinear dynamic--memory fractional Langevin system, together with theoretical analysis, numerical approximation, and representative recovery examples. Finally, Section 5 summarizes the main findings and outlines possible directions for future research.

\section{Preliminaries}

In this section, we introduce the main functional spaces, sets, and analytical tools that will be used throughout the paper.

Let $\mathcal{\mathcal{T}} > 0$ and denote by $\mathbb{R}$ the set of real numbers. Moreover, for \(d\in\mathbb N\), \(\mathbb R^d\) denotes the \(d\)-dimensional Euclidean space endowed with the norm
\[
\|x\|
=
\left(
\sum_{i=1}^{d}|x_i|^2
\right)^{1/2},
\]
while \(\mathbb R^{d\times d}\) denotes the space of real \(d\times d\) matrices.

Throughout this paper, we use several standard functional spaces. In particular,
\[
L^{1}(0,\mathcal{\mathcal{T}})
=
\left\{
x:[0,\mathcal{\mathcal{T}}]\to\mathbb{R}:
\int_{0}^{\mathcal{\mathcal{T}}}|x(t)|\,dt<\infty
\right\}
\]
denotes the Banach space of Lebesgue integrable functions endowed with the norm
\[
\|x\|_{L^{1}}
=
\int_{0}^{\mathcal{\mathcal{T}}}|x(t)|\,dt.
\]
Moreover,
\(
L^{\infty}(0,\mathcal{\mathcal{T}})
=
\{
x:[0,\mathcal{\mathcal{T}}]\to\mathbb{R}:
\operatorname*{ess\,sup}_{t\in[0,\mathcal{\mathcal{T}}]}|x(t)|<\infty
\}
\)
stands for the Banach space of essentially bounded functions equipped with the norm
\[
\|x\|_{\infty}
=
\operatorname*{ess\,sup}_{t\in[0,\mathcal{\mathcal{T}}]}|x(t)|.
\]

We also consider the space
\(
\mathcal{C}([0,\mathcal{\mathcal{T}}])
=
\left\{
x:[0,\mathcal{\mathcal{T}}]\to\mathbb{R}:
x \text{ is continuous}
\right\},
\)
equipped with the supremum norm
\[
\|x\|_{\mathcal{C}}
=
\sup_{t\in[0,\mathcal{T}]}|x(t)|,
\]
and the space
\[
\mathcal{C}^{1}([0,\mathcal{T}])
=
\left\{
x\in \mathcal{C}([0,\mathcal{T}]):
x' \in \mathcal{C}([0,\mathcal{T}])
\right\},
\]
which is endowed with the norm
\(
\|x\|_{\mathcal{C}^{1}}
=
\|x\|_{\mathcal{C}}
+
\|x'\|_{\mathcal{C}}.
\)
Finally, the space
\[
L_{\mathrm{loc}}^{1}(0,\infty)
=
\left\{
x:(0,\infty)\to\mathbb{R}:
x\in L^{1}(0,a)
\text{ for every } a>0
\right\}
\]
denotes the set of locally integrable functions on \((0,\infty)\).

Throughout the paper, the notation
\[
(f*g)(t)
=
\int_{0}^{t}f(t-s)g(s)\,ds
\]
denotes the convolution product of two locally integrable functions.
Moreover, for a function $x$ of exponential order, we denote its Laplace transform by
\[
X(p)
=
\mathcal L\{x(t)\}(p)
=
\int_{0}^{\infty}e^{-pt}x(t)\,dt.
\]
Unless otherwise stated, all functions considered in the sequel are assumed to be sufficiently smooth so that the involved integrals, derivatives, and Laplace transforms are well defined.

\begin{definition}

Let
\(
\Phi:(0,\infty)\times\Theta\to(0,\infty)
\)
be a positive function depending on the Laplace variable $p$ and a collection of modulation parameters $\Theta$.
The function
\(
\Phi(p,\Theta)
\)
is called a  $\mathcal{DMG}$.

\end{definition}


\begin{remark}

The $\mathcal{DMG}$ governs the generated memory kernel and thereby
determines the hereditary characteristics of the resulting fractional operators.

\end{remark}

\begin{definition}\label{23}

For \(\alpha>0\), the $\mathcal{DMK}$ associated with the $\mathcal{DMG}$  is defined by
\begin{equation}
\Psi_\alpha(t;\Theta)
=
\mathcal L^{-1}
\left\{
\Phi(p,\Theta)^{-\alpha}
\right\}(t).
\end{equation}

\end{definition}


\begin{remark}

Different selections of $\Phi$ generate different memory structures and therefore different fractional calculus.

\end{remark}

\noindent Generator--based constructions of memory kernels are related to broader developments in convolution and Sonine kernel fractional calculus; see, for example, \cite{Kochubei2011,Luchko2021,Samko1993}.

\begin{definition}\label{25}
Let $\alpha>0$. The $\mathcal{DMFI}$ associated with the $\mathcal{DMG}$ $\Phi$ is defined by
\begin{equation}
\begin{aligned}
(\mathcal I_\Phi^\alpha x)(t)
&=
\int_0^t
\Psi_\alpha(t-s;\Theta)x(s)\,ds  \\
&=
\int_0^t
\left[
\mathcal L^{-1}
\left\{
\Phi(p,\Theta)^{-\alpha}
\right\}
\right](t-s)
x(s)\,ds,
\qquad t\in[0,\mathcal{T}].
\end{aligned}
\end{equation}
\end{definition}

\begin{definition}\label{26}
Let $0<\alpha<1$. The $\mathcal{RL\;DMFD}$ associated with the $\mathcal{DMG}$ $\Phi$ is defined by
\begin{equation}
\begin{aligned}
(\mathcal D_\Phi^\alpha x)(t)
&=
\frac{d}{dt}
\int_0^t
\Psi_{1-\alpha}(t-s;\Theta)x(s)\,ds \\
&=
\frac{d}{dt}
\int_0^t
\left[
\mathcal L^{-1}
\left\{
\Phi(p,\Theta)^{-(1-\alpha)}
\right\}
\right](t-s)
x(s)\,ds,
\qquad t\in[0,\mathcal{T}].
\end{aligned}
\end{equation}
\end{definition}

\begin{definition}\label{27}
Let $0<\alpha<1$. The $\mathcal{C\;DMFD}$ 
associated with the $\mathcal{DMG}$ $\Phi$ is defined by
\begin{equation}
\begin{aligned}
({}^c\mathcal D_\Phi^\alpha x)(t)
&=
\int_0^t
\Psi_{1-\alpha}(t-s;\Theta)x'(s)\,ds \\
&=
\int_0^t
\left[
\mathcal L^{-1}
\left\{
\Phi(p,\Theta)^{-(1-\alpha)}
\right\}
\right](t-s)
x'(s)\,ds,
\qquad t\in[0,\mathcal{T}].
\end{aligned}
\end{equation}
\end{definition}
\noindent The proposed $\mathcal{DMFF}$ possesses several important advantages in comparison with many existing fractional operators available in the literature. In contrast to classical and generalized approaches where the memory kernel is prescribed \emph{a priori}, the present framework generates the memory structure systematically through the $\mathcal{DMG}$ 
\(
\Phi,
\)
hence providing a unified and flexible mechanism for constructing broad classes of memory--dependent operators. This generator--driven structure allows the memory behavior to be directly modulated through the parameter set \(\Theta\), which may encode singular, nonsingular, tempered, logarithmic, oscillatory, adaptive, or hybrid memory effects within a single analytical framework.

Another significant feature of the proposed operator is that it preserves several fundamental properties of fractional calculus and operational analysis. In particular, the $\mathcal{DMK}$ satisfy the semigroup relation
\[
\Psi_{\alpha} * \Psi_{\beta}
=
\Psi_{\alpha+\beta},
\]
which leads naturally to composition formulas, inverse relations, and a consistent convolution-based operational calculus. Such properties are not automatically guaranteed for many generalized kernel--based fractional operators appearing in the literature and often require additional compatibility assumptions on the kernels.
The $\mathcal{DMFF}$ provides a unified symbolic mechanism capable of recovering a broad spectrum of classical and modern fractional operators through suitable choices of the $\mathcal{DMG}$. 

\medskip

To further demonstrate the unifying capability of the proposed framework, the recovered fractional operators are classified into two categories. Table 1 illustrates that the $\mathcal{DMFF}$ naturally unifies and recovers
several classical and foundational fractional operators through
appropriate selections of the $\mathcal{DMG}$.

\begin{table}[h!]
\centering
\scriptsize
\renewcommand{\arraystretch}{1.2}

\begin{tabular}{|p{2.9cm}|p{3.2cm}|p{4.5cm}|p{2.3cm}|}
\hline
\textbf{Classical Operator} &
\textbf{Generator / Kernel Choice} &
\textbf{Resulting Kernel} &
\textbf{Memory Type}
\\
\hline

Riemann--Liouville \cite{Podlubny1999} &
$\Phi(p,\Theta)=p$ &
$\displaystyle
\Psi_\alpha(t)=
\frac{t^{\alpha-1}}{\Gamma(\alpha)}
$ &
Singular power-law
\\
\hline

Caputo \cite{Kilbas2006}
   &
$\Phi(p,\Theta)=p$ in Caputo form &
$\displaystyle
\Psi_{1-\alpha}(t)=
\frac{t^{-\alpha}}{\Gamma(1-\alpha)}
$ &
Singular power-law
\\
\hline

Weyl fractional \cite{Samko1993} &
Fourier/Laplace extension &
$\displaystyle
\Psi_\alpha(t)\sim t^{\alpha-1}
$ &
Infinite--memory
\\
\hline

Riesz fractional \cite{Kilbas2006} &
Symmetric power--law structure &
$\displaystyle
\Psi_\alpha(t)\sim |t|^{\alpha-1}
$ &
Symmetric nonlocal
\\
\hline

Hadamard--type \cite{Kilbas2006} &
Logarithmic memory generator &
$\displaystyle
\Psi_\alpha(t,s)=
\frac1{\Gamma(\alpha)}
\left(
\ln\frac ts
\right)^{\alpha-1}
\frac1s
$ &
Logarithmic memory
\\
\hline

Hilfer derivative \cite{Hilfer2000} &
Interpolation between RL and Caputo &
$\displaystyle
\mathcal{D}^{\alpha,\beta}
=
\mathcal{I}^{\beta(1-\alpha)}
\mathcal{D}\mathcal{I}^{(1-\beta)(1-\alpha)}
$ &
Interpolated singular memory
\\
\hline

Katugampola--type   \cite{Katugampola2014}  &
$\Phi(p,\Theta)=p^\rho$ &
$\displaystyle
\Psi_\alpha(t,s)=
\frac{\rho^{1-\alpha}}{\Gamma(\alpha)}
(t^\rho-s^\rho)^{\alpha-1}
s^{\rho-1}
$ &
Generalized power--law
\\
\hline

\end{tabular}

\vspace{1mm}

{\footnotesize
Table 1: Recovery of several classical and foundational fractional operators from the  $\mathcal{DMFF}$ \\
through suitable choices of the generator \(\Phi\).
}

\label{Tab1}

\end{table}

\noindent
On the other hand, Table~2 demonstrates that the  $\mathcal{DMFF}$ extends naturally
to several modern generalized fractional operators through the same
generator--based mechanism, encompassing tempered memory effects,
nonsingular kernels, Mittag--Leffler memory structures and generalized
convolution formulations.

\begin{table}[H]
\centering
\scriptsize
\renewcommand{\arraystretch}{1.2}

\begin{tabular}{|p{3cm}|p{3.2cm}|p{4.5cm}|p{2.3cm}|}
\hline
\textbf{Modern Operator} &
\textbf{Generator / Kernel Choice} &
\textbf{Resulting Kernel} &
\textbf{Memory Type}
\\
\hline

Tempered fractional \cite{Sabzikar2015} &
$\Phi(p,\Theta)=p+\lambda$ &
$\displaystyle
\Psi_\alpha(t)=
\frac{t^{\alpha-1}}{\Gamma(\alpha)}
e^{-\lambda t}
$ &
Tempered memory
\\
\hline

Prabhakar--type \cite{Prabhakar1971}  &
Mittag--Leffler symbolic generator &
$\displaystyle
\Psi_\alpha(t)
=
t^{\beta-1}
E_{\alpha,\beta}^{\gamma}(\lambda t^\alpha)
$ &
Generalized ML memory
\\
\hline

Sonine--kernel type \cite{Kochubei2011} &
General convolution generator &
$\displaystyle
\Psi_\alpha * \Phi_\alpha=\delta
$ &
Sonine memory pair
\\
\hline

Cotangent fractional \cite{Lakhalfa2021} &
$\Phi(p,\Theta)=p+\cot(\theta)$ &
$\displaystyle
\Psi_\alpha(t)=
\frac{
e^{-\cot(\theta)t}
t^{\alpha-1}
}{
\sin^\alpha(\theta)\Gamma(\alpha)
}
$ &
Cotangent--tempered
\\
\hline

Caputo--Fabrizio \cite{CaputoFabrizio2015} &
Exponential nonsingular kernel &
$\displaystyle
\Psi_\alpha(t)=
c_\alpha e^{-\mu_\alpha t}
$ &
Nonsingular exponential
\\
\hline

Atangana--Baleanu \cite{AtanganaBaleanu2016} &
Mittag--Leffler--type kernel &
$\displaystyle
\Psi_\alpha(t)=
c_\alpha
E_\alpha(-\mu_\alpha t^\alpha)
$ &
Nonsingular ML memory
\\
\hline

Distributed--order \cite{Chechkin2002} &
Order-distribution generator &
$\displaystyle
\int_0^1
\Phi(p,\Theta)^{-\alpha}
\mu(\alpha)\,d\alpha
$ &
Multi--scale memory
\\
\hline

Conformable derivative \cite{Khalil2014} &
Local reduction case &
$\displaystyle
T_\alpha x(t)
=
t^{1-\alpha}x'(t)
$ &
Local non--memory
\\
\hline

\end{tabular}

\vspace{1mm}

{\footnotesize
Table 2: Recovery of several modern generalized fractional operators from the  $\mathcal{DMFF}$ \\through suitable symbolic selections of the generator \(\Phi\).
}

\label{Tab2}

\end{table}

\noindent
\noindent
The following examples show how different choices of the $\mathcal{DMG}$
\(
\Phi
\)
recover classical and generalized fractional operators. For example, the choice
\(
\Phi(p,\Theta)=p
\)
leads to the classical Riemann--Liouville and Caputo operators, while
\(
\Phi(p,\Theta)=p+\lambda
\)
produces exponentially tempered memory effects. These examples indicate that different memory structures can be obtained within the same framework by varying the generator.

\begin{example}
Consider the choice
\(
\Phi(p,\Theta)=p.
\)
Then, by Definition \ref{23}, the generalized $\mathcal{DMK}$ becomes
\[
\Psi_\alpha(t;\Theta)
=
\mathcal L^{-1}\{p^{-\alpha}\}(t).
\]
Using the classical Laplace--transform formula
\[
\mathcal L^{-1}\{p^{-\alpha}\}(t)
=
\frac{t^{\alpha-1}}{\Gamma(\alpha)},
\;\; \alpha>0,
\]
we obtain
\(
\Psi_\alpha(t)
=
\frac{t^{\alpha-1}}{\Gamma(\alpha)}.
\)
Consequently, the $\mathcal{DMFI}$ reduces to
\[
(\mathcal I_\Phi^\alpha x)(t)
=
\int_0^t
\frac{(t-s)^{\alpha-1}}{\Gamma(\alpha)}
x(s)\,ds,
\]
which is precisely the classical Riemann--Liouville fractional integral.

Similarly, the corresponding $\mathcal{C\;DMFD}$ becomes
\[
{}^c\mathcal D_\Phi^\alpha x(t)
=
\int_0^t
\frac{(t-s)^{-\alpha}}{\Gamma(1-\alpha)}
x'(s)\,ds,
\]
which coincides with the classical Caputo fractional derivative.

\end{example}

\begin{example}
Consider the generator
\(
\Phi(p,\Theta)=p+\lambda,
\; \lambda>0.
\)
Then the associated $\mathcal{DMK}$ is given by
\[
\Psi_\alpha(t;\Theta)
=
\mathcal L^{-1}
\{(p+\lambda)^{-\alpha}\}(t).
\]
Using the shifted Laplace--transform identity
\[
\mathcal L^{-1}
\{(p+\lambda)^{-\alpha}\}(t)
=
e^{-\lambda t}
\frac{t^{\alpha-1}}{\Gamma(\alpha)},
\]
we obtain
\(
\Psi_\alpha(t)
=
\frac{t^{\alpha-1}}{\Gamma(\alpha)}
e^{-\lambda t}.
\)
Hence, the corresponding $\mathcal{DMFI}$  becomes
\[
(\mathcal I_\Phi^\alpha x)(t)
=
\int_0^t
\frac{(t-s)^{\alpha-1}}{\Gamma(\alpha)}
e^{-\lambda(t-s)}
x(s)\,ds,
\]
which is exactly the tempered fractional integral.
Moreover, the $\mathcal{C\;DMFD}$  takes the form
\[
{}^c\mathcal D_\Phi^\alpha x(t)
=
\int_0^t
\frac{(t-s)^{-\alpha}}{\Gamma(1-\alpha)}
e^{-\lambda(t-s)}
x'(s)\,ds,
\]
which coincides with the tempered Caputo fractional derivative involving exponentially tempered memory.

\end{example}

From the modeling perspective, the developed framework is particularly
suitable for describing complex dynamical systems involving heterogeneous
memory interactions, crossover effects, adaptive hereditary mechanisms,
and multiscale temporal behavior. At the same time, it combines
analytical flexibility with structural consistency while preserving
several fundamental properties of fractional calculus.


\section{Structural and Operational Properties of the Dynamic--Memory Fractional Calculus}

This section develops the main analytical framework underlying the proposed $\mathcal{DMFC}$. The obtained results establish the structural foundations, operational relations, and key analytical properties required for the subsequent theory, including admissibility conditions, inverse formulas, consistency principles, Laplace representations, and generalized memory dynamics.

\subsection{Kernel Structure, Admissibility, and Semigroup Properties}

We begin by examining the generated kernels and the conditions under which they produce a mathematically consistent dynamic--memory framework. In particular, we establish admissibility requirements, semigroup relations, and monotonicity properties that form the basis for the operator identities and analytical results developed later.


\begin{theorem}
Let $\alpha,\beta>0$. Then the following properties hold:
\begin{enumerate}[label=(\roman*)]
\item The $\mathcal{DMK}$  satisfies
\(
\Psi_\alpha * \Psi_\beta
=
\Psi_{\alpha+\beta}.
\)

\item The $\mathcal{DMFI}$  satisfies the semigroup property
\(
\mathcal I_\Phi^\alpha
\mathcal I_\Phi^\beta
=
\mathcal I_\Phi^{\alpha+\beta}.
\)

\item The $\mathcal{DMFI}$  is commutative, namely,
\(
\mathcal I_\Phi^\alpha
\mathcal I_\Phi^\beta x
=
\mathcal I_\Phi^\beta
\mathcal I_\Phi^\alpha x.
\)
\end{enumerate}
\end{theorem}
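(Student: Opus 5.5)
The plan is to prove (i) with the convolution theorem for the Laplace transform, and then to derive (ii) and (iii) from (i) using only associativity and commutativity of convolution. The standing assumptions are that $\Psi_\alpha,\Psi_\beta,\Psi_{\alpha+\beta}$ exist as locally integrable functions of exponential order. This is the implicit hypothesis in Definition \ref{23}, since the kernels are only defined when the inverse transforms exist.

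For (i), I would first record that $\Psi_\alpha\in L^1_{\mathrm{loc}}(0,\infty)$ has exponential order. Hence $\mathcal L\{\Psi_\alpha\}(p)=\Phi(p,\Theta)^{-\alpha}$ for all $p$ large enough, and the same holds for $\beta$. By Fubini's theorem, the convolution $\Psi_\alpha*\Psi_\beta$ is again locally integrable and of exponential order. The convolution theorem then gives
\[
\mathcal L\{\Psi_\alpha*\Psi_\beta\}(p)=\Phi(p,\Theta)^{-\alpha}\Phi(p,\Theta)^{-\beta}=\Phi(p,\Theta)^{-(\alpha+\beta)}.
\]
Positivity of $\Phi$ matters here: it makes the real powers unambiguous, so the exponent law $\Phi^{-\alpha}\Phi^{-\beta}=\Phi^{-(\alpha+\beta)}$ holds with no branch issue. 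The right-hand side is $\mathcal L\{\Psi_{\alpha+\beta}\}(p)$ on a common half-line. By Lerch's uniqueness theorem, two locally integrable functions of exponential order with equal Laplace transforms on a half-line agree almost everywhere. Therefore $\Psi_\alpha*\Psi_\beta=\Psi_{\alpha+\beta}$ a.e. on $(0,\infty)$.

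For (ii), I would write $\mathcal I_\Phi^\alpha x=\Psi_\alpha*x$ on $[0,\mathcal T]$, extending $x$ by zero beyond $\mathcal T$ if needed. Then
\[
\mathcal I_\Phi^\alpha\mathcal I_\Phi^\beta x=\Psi_\alpha*(\Psi_\beta*x)=(\Psi_\alpha*\Psi_\beta)*x=\Psi_{\alpha+\beta}*x=\mathcal I_\Phi^{\alpha+\beta}x.
\]
The key step is associativity, which I would justify by Fubini–Tonelli on the triangle $0\le r\le s\le t$. It applies because $\int_0^t\!\int_0^s|\Psi_\alpha(t-s)||\Psi_\beta(s-r)||x(r)|\,dr\,ds=(|\Psi_\alpha|*|\Psi_\beta|*|x|)(t)$, and this is finite for a.e. $t$ by Young's inequality on $L^1(0,\mathcal T)$, with $x\in L^1$. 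The same computation, done instead by taking Laplace transforms of $x$ extended by zero, gives an alternative check.

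For (iii), I would apply (ii) twice and use that $\alpha+\beta=\beta+\alpha$:
\[
\mathcal I_\Phi^\alpha\mathcal I_\Phi^\beta x=\mathcal I_\Phi^{\alpha+\beta}x=\mathcal I_\Phi^\beta\mathcal I_\Phi^\alpha x.
\]
The same conclusion also follows from $\Psi_\alpha*\Psi_\beta=\Psi_\beta*\Psi_\alpha$, which holds by the substitution $s\mapsto t-s$.

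The main obstacle is the rigor of (i) rather than the algebra. The theorem silently requires that $\Phi^{-\alpha}$ be a genuine Laplace transform of a locally integrable function of exponential order for every order involved, including $\alpha+\beta$. I would state this as the admissibility hypothesis, or note that the existence of $\Psi_\alpha$ and $\Psi_\beta$ already forces $\Phi^{-(\alpha+\beta)}$ to be the transform of $\Psi_\alpha*\Psi_\beta$, which then serves as the definition of $\Psi_{\alpha+\beta}$. With that in place, uniqueness via Lerch's theorem gives equality only almost everywhere. That is enough, because the kernels enter solely through integrals.
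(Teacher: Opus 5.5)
Your proposal is correct and follows essentially the paper's own proof: the Laplace convolution theorem gives (i), associativity of convolution together with (i) gives (ii), and commutativity (of the kernels, or equivalently $\alpha+\beta=\beta+\alpha$ in (ii)) gives (iii). What you add is rigor the paper leaves implicit: the existence and exponential-order hypotheses on the kernels, Lerch's uniqueness theorem (which gives equality almost everywhere), and a Fubini--Tonelli justification of associativity.
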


\begin{proof}
Using Laplace transforms, we obtain
\[
\mathcal L\{\Psi_\alpha * \Psi_\beta\}(p)
=
\Phi(p,\Theta)^{-\alpha}
\Phi(p,\Theta)^{-\beta}
=
\Phi(p,\Theta)^{-(\alpha+\beta)}.
\]
Therefore,
\[
\Psi_\alpha * \Psi_\beta
=
\Psi_{\alpha+\beta},
\]
which proves $(i)$.
Next, using the convolution representation of the $\mathcal{DMFI}$ together with $(i)$, we derive
\[
\mathcal I_\Phi^\alpha
\mathcal I_\Phi^\beta x
=
\Psi_\alpha * (\Psi_\beta * x)
=
(\Psi_\alpha * \Psi_\beta)*x
=
\Psi_{\alpha+\beta}*x
=
\mathcal I_\Phi^{\alpha+\beta}x,
\]
which proves $(ii)$.

Finally, since convolution is commutative, we have
\[
\Psi_\alpha * \Psi_\beta
=
\Psi_\beta * \Psi_\alpha.
\]
Hence,
\(
\mathcal I_\Phi^\alpha
\mathcal I_\Phi^\beta x
=
\mathcal I_\Phi^\beta
\mathcal I_\Phi^\alpha x,
\)
which establishes $(iii)$.
\end{proof}


The flexibility of the $\mathcal{DMFF}$ depends fundamentally on the selection of the $\mathcal{DMG}$.
However, not every choice necessarily produces a mathematically admissible
$\mathcal{DMK}$ or a well--defined operational structure. Therefore, it is important
to identify suitable conditions under which the generated kernels possess
appropriate analytical and structural properties.


\begin{definition}
A $\mathcal{DMG}$ 
is called admissible if the following conditions hold:

\begin{itemize}[itemsep=2pt,topsep=2pt]
\item[]
\begin{itemize}

\item[(A1)]
For every \(p>0\),
\(
\Phi(p,\Theta)>0;
\)

\item[(A2)]
For every \(\alpha>0\), the function
\(
\Phi(p,\Theta)^{-\alpha}
\)
admits an inverse Laplace transform;

\item[(A3)]
The $\mathcal{DMK}$
belongs to
\(
L_{\mathrm{loc}}^{1}(0,\infty);
\)

\item[(A4)]
The $\mathcal{DMK}$ satisfies the semigroup relation
\[
\Psi_\alpha * \Psi_\beta
=
\Psi_{\alpha+\beta},
\qquad
\alpha,\beta>0.
\]

\end{itemize}
\end{itemize}

\end{definition}


\begin{remark}
Conditions \((A1)-(A4)\) guarantee that the $\mathcal{DMFO}$
is  well defined and possess a consistent convolution--based operational
structure. In particular, the semigroup property ensures the validity of
composition formulas and inverse relations developed in the sequel.
\end{remark}


\begin{definition}
A $\mathcal{DMK}$
is called completely monotone if
\[
(-1)^n
\frac{d^n}{dt^n}
\Psi_\alpha(t;\Theta)
\ge0,
\qquad
t>0,
\quad
n=0,1,2,\ldots
\]
whenever the derivatives exist.
\end{definition}


\begin{theorem}
Suppose that the $\mathcal{DMG}$
is a Bernstein function. Then, for every \(\alpha>0\), the function
\(
\Phi(p,\Theta)^{-\alpha}
\)
is completely monotone. Consequently, the $\mathcal{DMK}$
is nonnegative and completely monotone.
\end{theorem}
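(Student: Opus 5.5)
The plan has two parts. First we show that $p\mapsto\Phi(p,\Theta)^{-\alpha}$ is completely monotone on $(0,\infty)$. Then we transfer this to the kernel $\Psi_\alpha$ through Bernstein's theorem, with the assumptions (A1)–(A2) ensuring that the inverse Laplace transform is an honest function.

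\emph{Step 1.} Write $\Phi(p,\Theta)^{-\alpha}=g_\alpha(\Phi(p,\Theta))$ with $g_\alpha(u)=u^{-\alpha}$. The function $g_\alpha$ is completely monotone on $(0,\infty)$, since $(-1)^n g_\alpha^{(n)}(u)=\alpha(\alpha+1)\cdots(\alpha+n-1)u^{-\alpha-n}\ge 0$. We then invoke the classical fact that a completely monotone function composed with a positive Bernstein function is completely monotone.

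To keep the argument self-contained, we prove this fact by induction with the Fa\`a di Bruno formula. Set $h=g\circ\Phi$. Then $-h'=(-g')(\Phi)\cdot\Phi'$. Here $-g'$ is again completely monotone, and $\Phi'$ is completely monotone because $\Phi$ is Bernstein. Products of completely monotone functions are completely monotone by the Leibniz rule. Inducting on the order of the derivative therefore gives $(-1)^n h^{(n)}\ge 0$ for all $n$. This settles the first assertion of the theorem.

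\emph{Step 2.} By Bernstein's theorem, there is a unique nonnegative Borel measure $\mu_\alpha$ on $[0,\infty)$ with $\Phi(p,\Theta)^{-\alpha}=\int_{[0,\infty)}e^{-pt}\,\mu_\alpha(dt)$. By (A2)–(A3) this measure is $\Psi_\alpha(t;\Theta)\,dt$, using uniqueness of the Laplace transform for locally integrable functions. Hence $\Psi_\alpha\ge 0$ almost everywhere, and everywhere at points of continuity. This gives the nonnegativity claim.

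\emph{Main obstacle.} The last assertion, that $\Psi_\alpha$ is itself completely monotone in $t$, does not follow from Step 2. Complete monotonicity of the symbol $\Phi^{-\alpha}$ in $p$ only encodes positivity of the kernel. It does not encode complete monotonicity of the kernel. The statement in fact fails as written: $\Phi(p)=p$ is Bernstein, yet for $\alpha>1$ the kernel $\Psi_\alpha(t)=t^{\alpha-1}/\Gamma(\alpha)$ is increasing.

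To obtain a correct result we would strengthen the hypotheses. We would assume that $\Phi$ is a \emph{complete} Bernstein function and that $0<\alpha\le 1$. Then $\Phi^{\alpha}$ is complete Bernstein, because complete Bernstein functions are closed under composition and $u\mapsto u^\alpha$ is complete Bernstein for $0<\alpha\le 1$. It follows that $\Phi^{-\alpha}=1/\Phi^{\alpha}$ is a Stieltjes function, so it has the representation
\[
\Phi(p,\Theta)^{-\alpha}=\frac{a}{p}+b+\int_{(0,\infty)}\frac{\sigma(dr)}{p+r}
\]
with $a,b\ge 0$ and $\sigma\ge 0$. Inverting term by term gives
\[
\Psi_\alpha(t)=a+\int_{(0,\infty)} e^{-rt}\,\sigma(dr),
\]
plus a Dirac mass $b\delta$, which must vanish by (A3). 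This $\Psi_\alpha$ is manifestly completely monotone, being a mixture of exponentials.

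In the final write-up, the proof of the theorem will therefore present Steps 1–2 under the stated hypothesis. The complete monotonicity of $\Psi_\alpha$ will be stated under this additional complete-Bernstein, $0<\alpha\le1$ assumption, with the counterexample $\Phi(p)=p$ showing that the assumption cannot simply be dropped.
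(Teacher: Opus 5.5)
Steps 1 and 2 of your proposal follow the same outline as the paper's proof. The paper cites ``classical properties of Bernstein functions'' for the complete monotonicity of $\Phi(p,\Theta)^{-\alpha}$, which your Leibniz induction actually proves, and then invokes Bernstein's theorem. The paper's argument fails at exactly the two points you flag.

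First, the paper asserts that Bernstein's theorem yields a nonnegative \emph{locally integrable function}. It only yields a measure. For $\Phi\equiv 1$, or any bounded Bernstein function, $\Phi^{-\alpha}(p)$ does not tend to $0$ as $p\to\infty$, so the inverse transform has an atom at $0$. Your appeal to the admissibility conditions is the right repair. Since admissibility is not a hypothesis of the theorem, state it explicitly as an added assumption. Also, it is (A3), not (A1)--(A2) as your opening paragraph says, that makes the inverse transform a function.

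Second, the paper passes from ``$\Phi^{-\alpha}$ is completely monotone in $p$'' to ``$\Psi_\alpha$ is completely monotone in $t$'' with the single word ``therefore''. Your counterexample $\Phi(p)=p$, $\alpha>1$, $\Psi_\alpha(t)=t^{\alpha-1}/\Gamma(\alpha)$ is correct. So the last assertion of the theorem is false as stated, and no argument along the paper's lines can close that step.

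Your corrected version is sound: complete Bernstein $\Phi$, $0<\alpha\le 1$, the Stieltjes representation of $\Phi^{-\alpha}$, and $b=0$ from (A3). The strengthened hypothesis is essentially forced, not just sufficient. For $\alpha=1$, complete monotonicity of $\Psi_1$ is equivalent to $1/\Phi$ being a Stieltjes function, i.e.\ to $\Phi$ being complete Bernstein.

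For example, $\Phi(p)=p+1-e^{-p}$ is Bernstein but not complete Bernstein. Expanding $1/\Phi=\sum_{k\ge 0}e^{-kp}(p+1)^{-k-1}$ gives $\Psi_1(t)=e^{-t}$ on $(0,1)$ and $\Psi_1(t)=e^{-t}+(t-1)e^{1-t}$ on $(1,2)$. This kernel is nonnegative and locally integrable but increases just after $t=1$. So Bernstein alone does not suffice even when $\alpha\le 1$.
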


\begin{proof}

Since \(\Phi\) is a Bernstein function, classical properties
of Bernstein functions imply that
\(
\Phi(p,\Theta)^{-\alpha}
\)
is completely monotone for every \(\alpha>0\). By Bernstein's theorem
for Laplace transforms, there exists a nonnegative locally integrable
function
\(
\Psi_\alpha(t;\Theta)
\)
such that
\[
\Phi(p,\Theta)^{-\alpha}
=
\mathcal L
\{
\Psi_\alpha(t;\Theta)
\}(p).
\]
Therefore, the $\mathcal{DMK}$ is completely monotone and nonnegative.

\end{proof}


\begin{remark}
Completely monotone kernels play an important role in the modeling of
hereditary phenomena, anomalous diffusion, relaxation processes, and
nonlocal dynamical systems. The above result therefore establishes an
important connection between the $\mathcal{DMFF}$ and modern theories of generalized
fractional calculus generated by Bernstein and Sonine--type structures.
\end{remark}

\noindent Similar admissibility and complete monotonicity concepts arise in generalized fractional operators generated by Bernstein and Sonine kernels; see \cite{Luchko2021}. The relation between Bernstein functions and completely monotone kernels follows from classical results in \cite{Schilling2012}.


\begin{remark}
If there exists a locally integrable kernel
\(
\widetilde{\Psi}_\alpha
\)
such that
\[
(\widetilde{\Psi}_\alpha * \Psi_\alpha * x)(t)
=
x(t)
\]
for sufficiently smooth functions \(x\), then
\(
(\Psi_\alpha,\widetilde{\Psi}_\alpha)
\)
forms a Sonine pair and generates mutually inverse convolution operators. Consequently, the proposed $\mathcal{DMFF}$  admits inverse relations analogous to those in Sonine kernel fractional calculus.
\end{remark}


\begin{remark}
The asymptotic behavior of the $\mathcal{DMG}$  determines the qualitative nature of the generated $\mathcal{DMK}$. For example, power--law generators typically produce long--memory effects, tempered generators lead to exponentially attenuated memory, while Mittag--Leffler-type generators may induce crossover behavior between distinct temporal regimes. Thus, the  $\mathcal{DMG}$  provides a mechanism for classifying memory structures through the generator itself.
\end{remark}
\subsection{Generalized Dynamic–Memory Operators and Consistency Principles}

The $\mathcal{DMFO}$  introduced previously are naturally
left--sided, since their definitions depend on the past history of the state
variable over the interval \([0,t]\). However, several analytical settings,
including variational formulations, adjoint systems, terminal--value problems,
and integration--by--parts identities, motivate the introduction of corresponding
right--sided operators. Moreover, to ensure analytical coherence and compatibility
with existing theories, it is important to investigate reduction and consistency
principles under suitable choices of the $\mathcal{DMG}$. Therefore, this subsection develops
generalized dynamic--memory operators and establishes consistency relations linking
the resulting framework with several classical fractional formulations.

\medskip

Let \(0\le a<t<b<\infty\). We first define the left--sided $\mathcal{DMFI}$ associated with the $\mathcal{DMG}$.

\begin{definition}
Let \(\alpha>0\). The left--sided $\mathcal{DMFI}$ is defined by
\[
(\mathcal I^{\alpha}_{\Phi,a+}x)(t)
=
\int_{a}^{t}
\Psi_{\alpha}(t-s;\Theta)x(s)\,ds
=
\int_a^t
\left[
\mathcal L^{-1}
\left\{
\Phi(p,\Theta)^{-\alpha}
\right\}
\right](t-s)
x(s)\,ds.
\]
\end{definition}

\begin{definition}
Let \(\alpha>0\). The right--sided $\mathcal{DMFI}$ is defined by
\[
(\mathcal I^{\alpha}_{\Phi,b-}x)(t)
=
\int_{t}^{b}
\Psi_{\alpha}(s-t;\Theta)x(s)\,ds
=
\int_t^b
\left[
\mathcal L^{-1}
\left\{
\Phi(p,\Theta)^{-\alpha}
\right\}
\right](s-t)
x(s)\,ds.
\]
\end{definition}
\noindent 
The left-- and right--sided $\mathcal{DMFI}$ introduced above
generalize the corresponding classical fractional integrals through the $\mathcal{DMK}$
\[
\Psi_{\alpha}(t;\Theta)
=
\mathcal L^{-1}
\{
\Phi(p,\Theta)^{-\alpha}
\}(t).
\]

\begin{definition}
Let \(0<\alpha<1\). The left--sided $\mathcal{RL\;DMFD}$ is defined by
\[
(D^{\alpha}_{\Phi,a+}x)(t)
=
\frac{d}{dt}
\int_{a}^{t}
\Psi_{1-\alpha}(t-s;\Theta)x(s)\,ds.
\]
\end{definition}

\begin{definition}
Let \(0<\alpha<1\). The right--sided $\mathcal{RL\;DMFD}$ is defined by
\[
(\mathcal D^{\alpha}_{\Phi,b-}x)(t)
=
-
\frac{d}{dt}
\int_{t}^{b}
\Psi_{1-\alpha}(s-t;\Theta)x(s)\,ds
=
-
\frac{d}{dt}
\int_t^b
\left[
\mathcal L^{-1}
\left\{
\Phi(p,\Theta)^{-(1-\alpha)}
\right\}
\right](s-t)
x(s)\,ds.
\]
\end{definition}

\begin{definition}
Let \(0<\alpha<1\). The left--sided $\mathcal{C\;DMFD}$ is defined by
\[
({}^{c}\mathcal D^{\alpha}_{\Phi,a+}x)(t)
=
\int_{a}^{t}
\Psi_{1-\alpha}(t-s;\Theta)x'(s)\,ds
=
\int_a^t
\left[
\mathcal L^{-1}
\left\{
\Phi(p,\Theta)^{-(1-\alpha)}
\right\}
\right](t-s)
x'(s)\,ds.
\]
\end{definition}

\begin{definition}
Let \(0<\alpha<1\). The right--sided $\mathcal{C\;DMFD}$ is defined by
\[
({}^{c}\mathcal D^{\alpha}_{\Phi,b-}x)(t)
=
-
\int_{t}^{b}
\Psi_{1-\alpha}(s-t;\Theta)x'(s)\,ds
=
-
\int_t^b
\left[
\mathcal L^{-1}
\left\{
\Phi(p,\Theta)^{-(1-\alpha)}
\right\}
\right](s-t)
x'(s)\,ds.
\]
\end{definition}


\begin{remark}
The left--sided operators represent past memory effects, while the right--sided operators account for future or terminal memory interactions. Therefore, both types of hereditary behavior can be treated within the same framework.
\end{remark}


\begin{remark}
The right--sided operators may be useful in future studies on variational problems, adjoint systems, optimal control, boundary--value problems, and integration--by--parts formulas.
\end{remark}

\begin{theorem}
The proposed $\mathcal{DMFF}$ consistently recovers the classical Riemann--Liouville and Caputo fractional operators when the generator is chosen as
\(
\Phi(p,\Theta)=p.
\)
In particular:
\begin{itemize}
\item the $\mathcal{DMFI}$ reduces to the classical Riemann--Liouville fractional integral;
\item the $\mathcal{RL\;DMFD}$ reduces to the classical Riemann--Liouville fractional derivative;
\item the $\mathcal{C\;DMFD}$ reduces to the classical Caputo fractional derivative.
\end{itemize}
\end{theorem}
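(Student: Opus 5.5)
The argument is a direct substitution: compute the kernel generated by $\Phi(p,\Theta)=p$ and insert it into Definitions \ref{25}, \ref{26} and \ref{27} (and into their left-sided versions with lower terminal $a$).

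\textbf{Step 1: the kernel.} For every $\gamma>0$ the Laplace transform gives
\[
\mathcal L\{t^{\gamma-1}/\Gamma(\gamma)\}(p)=p^{-\gamma},\qquad p>0,
\]
by the substitution $u=pt$ in the Gamma integral. The function $t^{\gamma-1}/\Gamma(\gamma)$ is locally integrable and of exponential order, so Lerch's uniqueness theorem identifies it as the inverse Laplace transform of $p^{-\gamma}$, up to a null set. Hence, by Definition \ref{23},
\[
\Psi_\gamma(t;\Theta)=\frac{t^{\gamma-1}}{\Gamma(\gamma)}.
\]
We need this for $\gamma=\alpha>0$, and for $\gamma=1-\alpha\in(0,1)$ when $0<\alpha<1$.

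\textbf{Step 2: the integral.} Substituting $\Psi_\alpha$ into Definition \ref{25} yields exactly
\[
\frac{1}{\Gamma(\alpha)}\int_0^t (t-s)^{\alpha-1}x(s)\,ds,
\]
which is the Riemann--Liouville fractional integral. The left-sided operator $\mathcal I^\alpha_{\Phi,a+}$ and the right-sided operator $\mathcal I^\alpha_{\Phi,b-}$ likewise become the classical left- and right-sided Riemann--Liouville integrals.

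\textbf{Step 3: the derivatives.} Substituting $\Psi_{1-\alpha}(t-s)=(t-s)^{-\alpha}/\Gamma(1-\alpha)$ into Definition \ref{26} gives
\[
\frac{1}{\Gamma(1-\alpha)}\frac{d}{dt}\int_0^t (t-s)^{-\alpha}x(s)\,ds,
\]
which is the Riemann--Liouville derivative of order $\alpha\in(0,1)$. Substituting the same kernel into Definition \ref{27} gives
\[
\frac{1}{\Gamma(1-\alpha)}\int_0^t (t-s)^{-\alpha}x'(s)\,ds,
\]
the Caputo derivative. The right-sided definitions, with their minus signs, reduce in the same way to the classical right-sided Riemann--Liouville and Caputo derivatives.

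\textbf{Main obstacle.} The only genuinely non-formal point is that $\Psi_\gamma$ is defined through $\mathcal L^{-1}$. To make the recovery rigorous one must justify uniqueness of the inverse transform. Lerch's theorem handles this: two locally integrable functions of exponential order with the same Laplace transform agree almost everywhere, so the convolution integrals are unaffected.

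The well-definedness of the operators themselves is routine. For $x\in L^1(0,\mathcal T)$, the kernel $t^{\gamma-1}$ is in $L^1_{\mathrm{loc}}$, so Young's inequality shows the integral is finite almost everywhere. The derivative in Definition \ref{26} is understood for $x$ such that $\Psi_{1-\alpha}*x$ is absolutely continuous, exactly as in the classical theory, and the Caputo form requires $x$ absolutely continuous (or $x\in\mathcal C^1$, per the paper's standing smoothness assumption).
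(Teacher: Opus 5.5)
Your proposal is correct and takes the same approach as the paper, which computes $\Psi_\alpha(t;\Theta)=\mathcal L^{-1}\{p^{-\alpha}\}(t)=t^{\alpha-1}/\Gamma(\alpha)$ and substitutes it into Definitions~\ref{25}--\ref{27}. Your use of Lerch's theorem to pin down the inverse transform, and your check of the left- and right-sided operators, make explicit points the paper leaves implicit.
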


\begin{proof}
If $\Phi(p,\Theta)=p$, then
\[
\Psi_\alpha(t;\Theta)
=
\mathcal L^{-1}\{p^{-\alpha}\}(t)
=
\frac{t^{\alpha-1}}{\Gamma(\alpha)}.
\]
Substituting this kernel into Definitions \ref{25}--\ref{27} immediately yields the classical Riemann--Liouville and Caputo operators.
\end{proof}


\begin{theorem}
Suppose that the 
\(
\Psi_{1-\alpha}(\cdot;\Theta)
\)
form an approximate identity as
\(
\alpha\to1^-,
\)
namely,

\[
\int_0^\infty
\Psi_{1-\alpha}(s;\Theta)
f(t-s)
\,ds
\longrightarrow
f(t)
\]
for every bounded continuous function
\(
f.
\)
Then, for every
\(
x\in \mathcal{C}^1([0,\mathcal{T})],
\)

\[
\lim_{\alpha\to1^-}
{}^c \mathcal D_\Phi^\alpha x(t)
=
x'(t).
\]

\end{theorem}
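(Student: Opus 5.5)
The plan is to rewrite the Caputo dynamic--memory derivative so that the approximate--identity hypothesis applies directly. Fix $t\in(0,\mathcal T)$ and let $x\in\mathcal C^1([0,\mathcal T])$. Define $f$ on $\mathbb R$ by
\[
f(\tau)=x'(\tau)\ \ (0\le\tau\le\mathcal T),\qquad f(\tau)=0\ \ (\tau<0),
\]
and extend it by a constant for $\tau>\mathcal T$. The substitution $s\mapsto t-s$ in Definition \ref{27} then gives
\[
{}^c\mathcal D_\Phi^\alpha x(t)=\int_0^t\Psi_{1-\alpha}(s;\Theta)\,x'(t-s)\,ds=\int_0^\infty\Psi_{1-\alpha}(s;\Theta)\,f(t-s)\,ds .
\]
The function $f$ is bounded by $\|x'\|_{\mathcal C}$, but it is continuous only when $x'(0)=0$, because it jumps at $\tau=0$. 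The hypothesis cannot therefore be applied to $f$ directly.

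To handle the jump, I will split $f=g+h$. Here $g$ is a bounded continuous function that agrees with $f$ on $[\delta,\infty)$ for some $0<\delta<t$; for instance, let $g$ interpolate linearly to $0$ on $[0,\delta]$ and vanish on $(-\infty,0]$. The remainder $h=f-g$ is supported in $[0,\delta]$, so $|h|\le 2\|x'\|_{\mathcal C}$. The hypothesis gives $\int_0^\infty\Psi_{1-\alpha}(s)g(t-s)\,ds\to g(t)=x'(t)$ as $\alpha\to1^-$. The term involving $h$ equals $\int_{t-\delta}^{t}\Psi_{1-\alpha}(s)h(t-s)\,ds$, which only involves the kernel on the tail region $s\ge t-\delta>0$.

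The main obstacle is showing that this tail term vanishes. Testing the hypothesis with $f\equiv1$ shows $\int_0^\infty\Psi_{1-\alpha}\to1$. Next, take a continuous bump $\varphi$ with $0\le\varphi\le1$ that equals $1$ near $0$ and vanishes on $[t-\delta,\infty)$, shifted so that $f(\tau)=\varphi(t-\tau)$ is an admissible bounded continuous test function. The hypothesis then gives $\int_0^\infty\Psi_{1-\alpha}(s)\varphi(s)\,ds\to\varphi(0)=1$. If the kernels are nonnegative, as for instance under the Bernstein condition of the preceding theorem, subtracting these two limits shows that $\int_{t-\delta}^\infty\Psi_{1-\alpha}(s)\,ds\to0$. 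The $h$--term is then bounded by $2\|x'\|_{\mathcal C}$ times this tail integral and tends to $0$. Without positivity, I would instead read the hypothesis as the standard notion of approximate identity, in which uniform $L^1$ bounds and vanishing mass outside neighbourhoods of $0$ are part of the definition, and I would note this explicitly.

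Combining the two pieces yields ${}^c\mathcal D_\Phi^\alpha x(t)\to x'(t)$ for every $t\in(0,\mathcal T)$. At the endpoint $t=0$ both sides involve $\int_0^0=0$, so the statement is understood in the interior or by one--sided continuity. As a sanity check, for $\Phi(p)=p$ the kernel is $s^{-\alpha}/\Gamma(1-\alpha)$, whose mass on $[0,t]$ equals $t^{1-\alpha}/\Gamma(2-\alpha)\to1$ and concentrates at $0$. This recovers the classical limit of the Caputo derivative and confirms that the argument is consistent.
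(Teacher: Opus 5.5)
Your argument is correct under the assumption you flag, and it takes a different and more careful route than the paper. The paper's proof is one line. It says that, since $x'$ is continuous on $[0,\mathcal{T}]$, the approximate-identity hypothesis gives $\int_0^t\Psi_{1-\alpha}(t-s;\Theta)x'(s)\,ds\to x'(t)$ directly. It never addresses the point you isolate. The truncated integral equals $\int_0^\infty\Psi_{1-\alpha}(s;\Theta)f(t-s)\,ds$ with $f=x'$ extended by zero to $(-\infty,0)$, and this $f$ jumps at $0$ whenever $x'(0)\neq0$, so the hypothesis does not apply to it.

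Your splitting $f=g+h$ supplies exactly the missing step: the hypothesis handles $g$, and the bump-function comparison kills the $h$-term. For nonnegative kernels this is a hands-on instance of the portmanteau theorem. The function $s\mapsto x'(t-s)\mathbf{1}_{[0,t]}(s)$ is bounded and discontinuous only at $s=t$, which is a $\delta_0$-null set when $t>0$.

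You are also right that some extra assumption is needed. Convergence against bounded continuous test functions alone does not control a signed kernel near $s=t$. For example, add the dipole $n\mathbf{1}_{[t-1/n,t]}-n\mathbf{1}_{[t,t+1/n]}$ to a nonnegative approximate identity. This changes every integral against a continuous function by $o(1)$, yet it shifts $\int_0^t\Psi(s)x'(t-s)\,ds$ by about $x'(0)$. So the paper's proof is incomplete precisely where your positivity (or standard approximate-identity) assumption enters.

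Two smaller corrections. First, testing with $f\equiv1$ presupposes $\Psi_{1-\alpha}\in L^1(0,\infty)$. This fails for $\Phi(p,\Theta)=p$, since the kernel $s^{-\alpha}/\Gamma(1-\alpha)$ has infinite mass, so your sanity-check case does not literally satisfy the hypothesis. The fix is to replace $1$ by a continuous compactly supported cutoff $\chi$ with $0\le\chi\le1$ and $\chi=1$ on $[0,t]$. Then $0\le\int_{t-\delta}^{t}\Psi_{1-\alpha}\le\int_0^\infty\Psi_{1-\alpha}(\chi-\varphi)\to0$ in the same way, and this version covers the classical case.

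Second, at $t=0$ the left-hand side is identically $0$ while the right-hand side is $x'(0)$. Your phrase ``both sides involve $\int_0^0=0$'' is inaccurate: the claim is false at $t=0$ unless $x'(0)=0$. It must be restricted to $t\in(0,\mathcal{T}]$ rather than rescued by ``one-sided continuity''. Your argument works unchanged at $t=\mathcal{T}$.
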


\begin{proof}
From the definition of the $\mathcal{C\;DMFD}$,
\[
{}^c \mathcal D_\Phi^\alpha x(t)
=
\int_0^t
\Psi_{1-\alpha}(t-s;\Theta)
x'(s)
\,ds.
\]
Since
\(
x'
\)
is continuous on
\(
[0,\mathcal{T}],
\)
the approximate identity property implies

\[
\lim_{\alpha\to1^-}
\int_0^t
\Psi_{1-\alpha}(t-s;\Theta)
x'(s)
\,ds
=
x'(t).
\]
Therefore,
\(
\lim_{\alpha\to1^-}
{}^c \mathcal D_\Phi^\alpha x(t)
=
x'(t).
\)
\end{proof}

\begin{theorem}
Suppose that the 
\(
\Psi_\alpha(\cdot;\Theta)
\)
form an approximate identity as
\(
\alpha\to0^+,
\)
namely,

\[
\int_0^\infty
\Psi_\alpha(s;\Theta)
f(t-s)
\,ds
\to
f(t)
\]
for every bounded continuous function
\(
f.
\)
Then, for every
\(
x\in \mathcal C([0,\mathcal{T}]),
\)

\[
\lim_{\alpha\to0^+}
\mathcal I_\Phi^\alpha x(t)
=
x(t).
\]

\end{theorem}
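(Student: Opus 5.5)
The plan is to mirror the proof of the preceding theorem for the Caputo case. The only new ingredient is a careful reduction of the finite-interval convolution $\int_0^t$ to the whole-line form $\int_0^\infty$ that appears in the approximate identity hypothesis.

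\textbf{Step 1: extension of $x$.} Fix $t\in[0,\mathcal T]$ and extend $x$ to a bounded function $f$ on $\mathbb R$ by setting
\[
f(s)=x(s)\ \text{for } s\in[0,\mathcal T],\qquad f(s)=0\ \text{for } s<0 .
\]
The values for $s>\mathcal T$ are irrelevant. Substituting $s\mapsto t-s$ in the definition of the $\mathcal{DMFI}$ then gives
\[
(\mathcal I_\Phi^\alpha x)(t)=\int_0^t \Psi_\alpha(s;\Theta)\,x(t-s)\,ds=\int_0^\infty \Psi_\alpha(s;\Theta)\,f(t-s)\,ds .
\]

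\textbf{Step 2: the continuity obstacle.} The zero extension is bounded but not continuous at $0$ unless $x(0)=0$, so the hypothesis cannot be applied verbatim. This is the main obstacle. I would handle it in one of two ways.

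\emph{Option (a): continuous extension.} Use instead the extension $f(s)=x(0)$ for $s<0$ and $f(s)=x(\mathcal T)$ for $s>\mathcal T$, which is bounded and continuous. The hypothesis then yields
\[
\int_0^\infty \Psi_\alpha(s;\Theta)\,f(t-s)\,ds\to f(t)=x(t).
\]
The difference from $(\mathcal I_\Phi^\alpha x)(t)$ is
\[
x(0)\int_t^\infty \Psi_\alpha(s;\Theta)\,ds .
\]
For fixed $t>0$ this tail should tend to $0$ as $\alpha\to0^+$, since an approximate identity concentrates its mass near $s=0$.

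To justify the tail estimate from the stated hypothesis alone, choose a bounded continuous cutoff $g$ with $0\le g\le 1$, $g=0$ on $[-t/2,\infty)$ and $g=1$ on $(-\infty,-t]$. Apply the hypothesis to $g$ at the point $0$:
\[
\int_0^\infty \Psi_\alpha(s;\Theta)\,g(-s)\,ds\to g(0)=0 .
\]
If $\Psi_\alpha\ge0$, as in the Bernstein-generator theorem, this bounds $\int_t^\infty\Psi_\alpha$ and forces it to $0$. Without positivity, I would apply the same argument with $g$ chosen so that $g(-s)$ approximates the indicator of $[t,\infty)$.

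\emph{Option (b): at $t=0$.} Both sides equal $0$ on the left, so one must either restrict to $t\in(0,\mathcal T]$ or assume $x(0)=0$. I would state this restriction explicitly rather than hide it.

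\textbf{Step 3: conclusion.} Combining Steps 1 and 2 gives $\lim_{\alpha\to0^+}\mathcal I_\Phi^\alpha x(t)=x(t)$ for every $t\in(0,\mathcal T]$ and every $x\in\mathcal C([0,\mathcal T])$. This is exactly parallel to how the previous theorem passed from the approximate identity to $x'(t)$. If one reads the paper's hypothesis in the same loose spirit as that proof, in which the hypothesis is invoked directly on the truncated integral, Step 1 alone completes the argument.
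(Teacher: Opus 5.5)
Your criticism of the direct argument is correct, and it applies to the paper's own proof. That proof is exactly the ``loose'' reading you mention at the end: it invokes the approximate-identity hypothesis directly on $\int_0^t\Psi_\alpha(t-s;\Theta)x(s)\,ds$. That integral equals $\int_0^\infty\Psi_\alpha(s;\Theta)f(t-s)\,ds$ only for an extension $f$ vanishing on $(-\infty,0)$, and such an $f$ is discontinuous at $0$ unless $x(0)=0$. Moreover, at $t=0$ one has $\mathcal I_\Phi^\alpha x(0)=0$ for all $\alpha$, so the stated conclusion fails there whenever $x(0)\neq0$. Your Option (a) is a sound repair for $t\in(0,\mathcal T]$ when $\Psi_\alpha\ge0$. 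With the continuous extension, $\mathcal I_\Phi^\alpha x(t)=\int_0^\infty\Psi_\alpha(s;\Theta)f(t-s)\,ds-x(0)\int_t^\infty\Psi_\alpha(s;\Theta)\,ds$, and your cutoff gives $0\le\int_t^\infty\Psi_\alpha(s;\Theta)\,ds\le\int_0^\infty\Psi_\alpha(s;\Theta)g(-s)\,ds\to g(0)=0$. You should drop the closing sentence saying Step 1 alone suffices, since you have just shown that it does not.

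The genuine gap is the signed case, which the statement does not exclude. Replacing the indicator $\chi_{[t,\infty)}$ by a continuous ramp $g(-s)$ costs $\int_{t-\delta}^{t}|\Psi_\alpha(s;\Theta)|\,ds$. The hypothesis only gives convergence of $\int_0^\infty\Psi_\alpha(s;\Theta)h(s)\,ds$ for each fixed continuous $h$, so it controls nothing uniformly in $\alpha$. For instance, with $\chi_E$ the indicator of $E$ and a fixed $t_0\in(0,\mathcal T)$, consider the family
\[
\Psi_\alpha=\frac1\alpha\chi_{[0,\alpha]}+\frac1\alpha\bigl(\chi_{[t_0,t_0+\alpha]}-\chi_{[t_0-\alpha,t_0]}\bigr).
\]
It satisfies $\int_0^\infty\Psi_\alpha(s)f(t-s)\,ds\to f(t)$ for every bounded continuous $f$ and every $t$, because the second term produces two averages of $f$ near $t-t_0$ that cancel in the limit. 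Yet for $\alpha<t_0$ one has $\int_{t_0}^\infty\Psi_\alpha(s)\,ds=1$ and $\int_0^{t_0}\Psi_\alpha(s)\,ds=0$. Thus the convolution of $\Psi_\alpha$ with $x\equiv1$ equals $0\neq x(t_0)$ at $t_0$. This family is not of the form $\mathcal L^{-1}\{\Phi^{-\alpha}\}$, so it does not refute the theorem for generated kernels. It does show that the stated hypothesis alone cannot give your tail estimate. You need an additional assumption. One option is $\Psi_\alpha\ge0$, e.g. from a Bernstein generator as in the paper's complete monotonicity theorem. Another is the standard concentration condition $\int_\delta^\infty|\Psi_\alpha(s;\Theta)|\,ds\to0$ for each $\delta>0$, which makes the tail term $o(1)$ at once. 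Otherwise you would have to extract such control from $\mathcal L\{\Psi_\alpha\}=\Phi(p,\Theta)^{-\alpha}$, which neither your argument nor the paper's does.
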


\begin{proof}
From the definition of the $\mathcal{DMFI}$
and since
\(
x
\)
is continuous on
\(
[0,\mathcal{T}],
\)
the approximate identity property implies
\[
\lim_{\alpha\to0^+}
\int_0^t
\Psi_\alpha(t-s;\Theta)
x(s)
\,ds
=
x(t).
\]
Hence,
\(
\lim_{\alpha\to0^+}
\mathcal I_\Phi^\alpha x(t)
=
x(t).
\)

\end{proof}

\begin{remark}
The previous assumptions require the $\mathcal{DMK}$ to behave as approximate identities as
\(
\alpha\to1^-
\)
or
\(
\alpha\to0^+.
\)
Such behavior occurs for classical choices of the $\mathcal{DMG}$, including
\(
\Phi(p,\Theta)=p,
\)
hence recovering the standard limiting relations of fractional calculus.
\end{remark}


\begin{theorem}
Assume that the 
\(
\Psi_{1-\alpha}(t;\Theta)
\)
is nonnegative for all \(t>0\). Let
\(
x\in \mathcal{C}^{1}([0,\mathcal{T}])
\)
be a nondecreasing function. Then
\[
{}^{c} \mathcal D_{\Phi}^{\alpha}x(t)\ge0,
\;\; t\in[0,\mathcal{T}].
\]
Similarly, if \(x\) is nonincreasing, then
\(
{}^{c} \mathcal D_{\Phi}^{\alpha}x(t)\le 0.
\)
\end{theorem}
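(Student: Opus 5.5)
The plan is to argue directly from the integral representation of the Caputo derivative in Definition \ref{27}, namely
\[
({}^{c}\mathcal D_{\Phi}^{\alpha}x)(t)=\int_0^t \Psi_{1-\alpha}(t-s;\Theta)\,x'(s)\,ds ,
\]
and to show that the integrand has a fixed sign. Since $x\in\mathcal C^1([0,\mathcal T])$ is nondecreasing, its derivative satisfies $x'(s)\ge 0$ for every $s\in[0,\mathcal T]$. To see this, take the limit of the nonnegative difference quotients $(x(s+h)-x(s))/h$ with $h>0$; at the right endpoint, use one-sided quotients with $h<0$, whose numerator and denominator are both nonpositive. By hypothesis, $\Psi_{1-\alpha}(t-s;\Theta)\ge 0$ for $0\le s<t$. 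So the product $\Psi_{1-\alpha}(t-s;\Theta)x'(s)$ is nonnegative almost everywhere on $(0,t)$.

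The only point needing care is that the integral makes sense, so that positivity of the integral follows from positivity of the integrand. We will invoke admissibility condition (A3), which gives $\Psi_{1-\alpha}\in L^1_{\mathrm{loc}}(0,\infty)$. Together with the bound $|x'(s)|\le\|x'\|_{\mathcal C}$, this makes the integrand Lebesgue integrable on $(0,t)$. This also covers a possible singularity of the kernel at $s=t$, as in the power-law case. Monotonicity of the Lebesgue integral then yields ${}^{c}\mathcal D_{\Phi}^{\alpha}x(t)\ge 0$ for $t\in(0,\mathcal T]$. At $t=0$ the integral is over a degenerate interval and equals $0$, so the inequality holds there trivially.

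For the nonincreasing case, we apply the result just obtained to $-x$, which is nondecreasing and belongs to $\mathcal C^1$. By linearity of the integral in $x'$, we get ${}^{c}\mathcal D_{\Phi}^{\alpha}x(t)=-\,{}^{c}\mathcal D_{\Phi}^{\alpha}(-x)(t)\le 0$.

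There is no genuine obstacle in this argument. The only step requiring any attention is the integrability justification above, and it is handled directly by (A3) and the boundedness of $x'$.
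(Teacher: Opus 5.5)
Your proposal is correct and matches the paper's proof. Since $x'\ge 0$ and $\Psi_{1-\alpha}\ge 0$, the integrand $\Psi_{1-\alpha}(t-s;\Theta)x'(s)$ is nonnegative, so the Caputo integral is nonnegative, and the nonincreasing case follows symmetrically. Your extra details (integrability via (A3), which the theorem assumes only implicitly through well-definedness of the operator; the trivial case $t=0$; and passing to $-x$ by linearity) are sound refinements of steps the paper leaves unstated.
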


\begin{proof}
Suppose that \(x\) is nondecreasing. Then
\(
x'(t)\ge0,
\;\; t\in[0,\mathcal{T}].
\)
Since the generated kernel is nonnegative, we obtain
\[
\Psi_{1-\alpha}(t-s;\Theta)x'(s)\ge0,
\qquad 0\le s\le t.
\]
Therefore,
\[
{}^{c} \mathcal D_{\Phi}^{\alpha}x(t)
=
\int_{0}^{t}
\Psi_{1-\alpha}(t-s;\Theta)x'(s)\,ds
\ge0.
\]
The proof for nonincreasing functions follows analogously.
\end{proof}


\begin{remark}
The previous theorem shows that the proposed $\mathcal{DMFD}$ preserves
the qualitative monotonicity behavior of the underlying function whenever the generated memory
kernel is nonnegative. This property is important in the analysis of stability, positivity,
comparison principles, and memory--dependent dynamical systems.
\end{remark}


\subsection{Comparative Analysis and Modeling Advantages}

This subsection compares the proposed $\mathcal{DMFO}$ with several classes of generalized fractional operators, including the Caputo--Fabrizio operator, the Atangana--Baleanu operator, and fractional operators associated with Sonine kernels. These operators correspond to different memory structures, such as nonsingular exponential kernels, Mittag--Leffler kernels, and general convolution formulations.

As shown in Table~3, the proposed $\mathcal{DMFO}$ differs from these approaches because the memory kernels are generated through the $\mathcal{DMG}$
instead of being prescribed in advance. Different choices of the generator may therefore lead to different memory behaviors within the same framework.
The generated operators also retain several operational properties developed earlier, including semigroup relations, inverse formulas, and Laplace representations. This allows different classes of memory effects to be studied using the same analytical setting.

\begin{table}[h!]
\centering
\scriptsize
\renewcommand{\arraystretch}{1.25}
\begin{tabular}{|p{2.7cm}|p{3.2cm}|p{3.6cm}|p{3.6cm}|}
\hline
\textbf{Operator} &
\textbf{Main Structure} &
\textbf{Advantages} &
\textbf{Limitations}
\\
\hline

Caputo--Fabrizio &
Nonsingular exponential kernel &
Simple kernel; useful for short--memory and exponential relaxation effects; avoids singularity at the origin. &
Limited memory flexibility; exponential kernel is fixed; does not naturally recover broad classes of memory laws; semigroup property is not generally available.
\\
\hline

Atangana--Baleanu &
Nonsingular Mittag--Leffler kernel &
Captures crossover memory effects; more flexible than pure exponential kernels; suitable for several physical models. &
Kernel is still prescribed in advance; operational calculus may be more involved; semigroup and inverse relations are not automatic in the general setting.
\\
\hline

Sonine--kernel operators &
General convolution kernels satisfying Sonine--type relations &
Very general and analytically powerful; supports inverse relations and broad convolution structures. &
The kernel pair must usually be assumed or constructed separately; the memory law is not always generated from a symbolic mechanism; practical kernel selection may be difficult.
\\
\hline

 $\mathcal{DMFO}$  &
Generator--based kernel
\begin{align*}
\Psi_\alpha(t;\Theta)
=\\
\mathcal L^{-1}
\{\Phi(p,\Theta)^{-\alpha}\}(t)
\end{align*}
&
Memory is generated systematically through \(\Phi\); unifies singular, nonsingular, tempered, Mittag--Leffler, Sonine--type, and distributed--order structures; preserves semigroup and operational properties under admissibility conditions. &
Requires admissibility conditions on the generator; suitable choices of \(\Phi\) must be selected according to the modeled memory behavior.
\\
\hline
\end{tabular}

\vspace{1mm}

{\footnotesize
Table 3: Comparison between the proposed dynamic--memory fractional framework and \\some common generalized fractional operators.
}
\label{TabComparison}
\end{table}
Beyond the comparisons presented above, the proposed $\mathcal{DMFF}$ allows different memory behaviors to be considered within the same framework. In practical models, memory effects may combine short--time exponential decay, crossover behavior, long--time power--law effects, and multiscale or adaptive interactions rather than being determined by a single fixed kernel.

Such behaviors arise naturally in viscoelastic materials, anomalous diffusion processes, biological memory systems, heat transfer in complex media, and relaxation phenomena in heterogeneous environments.
However, most existing fractional operators are constructed using a prescribed fixed kernel. For instance:
\begin{itemize}[itemsep=2pt,topsep=2pt]

\item
the classical Caputo operator is associated with a singular power--law kernel;

\item
the Caputo--Fabrizio operator employs a nonsingular exponential kernel;

\item
the Atangana--Baleanu operator uses a Mittag--Leffler-type memory kernel.

\end{itemize}
Although these operators are useful in many applications, each of them is typically associated with a single dominant memory structure. Consequently, combining several hereditary mechanisms within a unified operator often becomes difficult.

\medskip

In contrast, the proposed $\mathcal{DMFF}$ generates the memory structure through the $\mathcal{DMG}$
which allows different memory behaviors to coexist simultaneously within the same operator.
To illustrate this advantage, consider the dynamic--memory fractional relaxation equation
\begin{equation}\label{Relax}
{}^{c}\mathcal D_{\Phi}^{\alpha}x(t)
=
-\kappa x(t),
\;\;
x(0)=x_{0},
\end{equation}
where \(\kappa>0\) is a relaxation parameter.
Suppose now that the $\mathcal{DMG}$ is chosen in the form
\begin{equation}\label{GeneratorExample}
\Phi(p,\Theta)
=
(p+\lambda)^{\mu}
+
\eta p^{\nu},
\end{equation}
where
\(
\lambda>0,
\;
\eta>0,
\;
0<\mu,\nu<1.
\)
The generator \eqref{GeneratorExample} combines two different hereditary mechanisms:
\begin{itemize}[itemsep=2pt,topsep=2pt]

\item
the term
\(
(p+\lambda)^{\mu}
\)
produces exponentially tempered memory effects associated with short-memory relaxation;

\item
the term
\(
p^{\nu}
\)
generates long--memory power--law hereditary behavior.

\end{itemize}
Consequently, the $\mathcal{DMK}$
\[
\Psi_{\alpha}(t;\Theta)
=
\mathcal L^{-1}
\left\{
\left(
(p+\lambda)^{\mu}
+
\eta p^{\nu}
\right)^{-\alpha}
\right\}(t)
\]
simultaneously incorporates
tempered memory attenuation,
power--law persistence,
crossover memory behavior and
multiscale hereditary interactions.
This provides an important modeling advantage over many existing fractional operators based on a single prescribed kernel.
In particular:
\begin{itemize}[itemsep=2pt,topsep=2pt]

\item
for small times, the exponentially tempered component dominates, producing rapid initial relaxation;

\item
for large times, the power--law component becomes dominant, generating persistent long-memory effects.

\end{itemize}
Therefore, the $\mathcal{DMFF}$ naturally captures the transition between short--memory and long--memory dynamics within a single symbolic operator.
By contrast, classical fractional operators usually require changing the kernel itself or introducing hybrid coupled models in order to describe such crossover phenomena.

Conversely, many existing generalized fractional operators rely on prescribed memory kernels, thereby restricting the resulting hereditary behavior to a predetermined form. For example, the classical Caputo operator captures power--law memory, the Caputo--Fabrizio operator reflects exponentially decaying memory, while the Atangana--Baleanu operator is governed by a Mittag--Leffler--type kernel.
Therefore, these operators cannot naturally generate the combined symbolic memory structure
\[
\Phi(p,\Theta)
=
(p+\lambda)^{\mu}
+
\eta p^{\nu},
\]
since this generator simultaneously incorporates exponentially tempered and power--law hereditary mechanisms within a single operational model. In particular, the corresponding $\mathcal{DMK}$
\[
\Psi_{\alpha}(t;\Theta)
=
\mathcal L^{-1}
\left\{
\left(
(p+\lambda)^{\mu}
+
\eta p^{\nu}
\right)^{-\alpha}
\right\}(t)
\]
cannot generally be represented through a single fixed exponential kernel, a single Mittag--Leffler kernel, or a pure power--law kernel.
Consequently, standard generalized fractional operators would typically require:
\begin{itemize}[itemsep=2pt,topsep=2pt]

\item
switching between different fractional operators in different temporal regimes;

\item
constructing coupled hybrid fractional models;

\item
introducing additional empirical correction terms

\end{itemize}
in order to reproduce the same crossover memory behavior generated naturally by the proposed  framework.

By contrast, the present dynamic--memory formulation incorporates these heterogeneous hereditary effects directly through the structure of the generator itself. This provides a more unified, flexible, and analytically consistent mechanism for modeling multiscale memory dynamics.

Moreover, different choices of the $\mathcal{DMG}$
 allow the dynamic memory structure to be tuned continuously according to the physical characteristics of the modeled system. Consequently, the proposed framework provides significantly greater flexibility for modeling heterogeneous and adaptive memory processes.

Figure 1 shows the qualitative behavior of several memory kernels corresponding to different choices of the $\mathcal{DMG}$. 
The figure illustrates how different generators produce transitions between exponentially decaying and power--law memory behaviors.

\begin{figure}[H]
\centering
\includegraphics[width=0.96\textwidth]{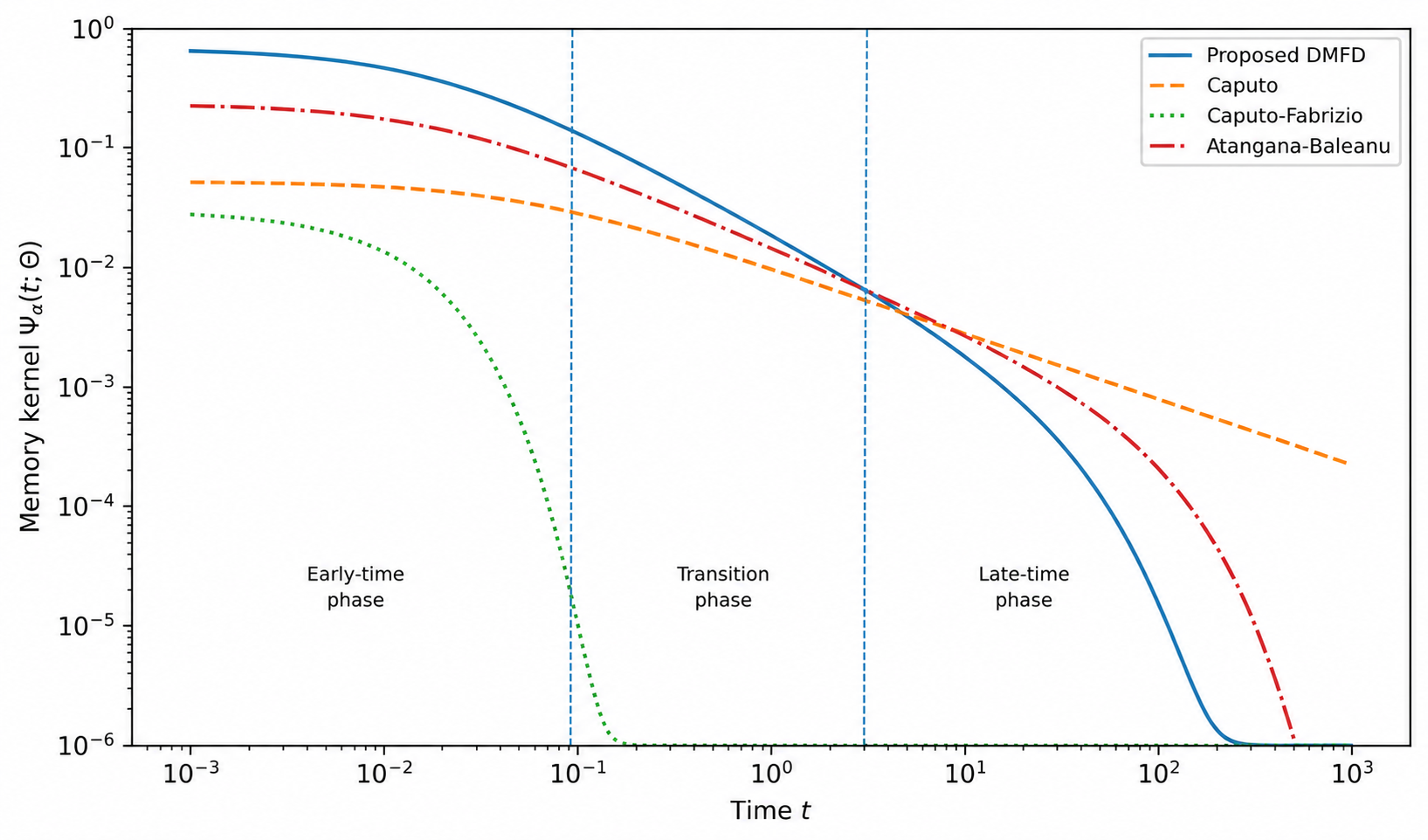}

\vspace{1pt}
\begin{minipage}{0.78\textwidth}
\captionsetup{font=scriptsize}
\caption{Qualitative comparison of generated memory kernels associated with different  choices of the $\mathcal{DMG}$. The proposed framework allows the simultaneous incorporation of exponentially tempered and power--law hereditary behaviors within a unified symbolic structure.}
\label{FigMemory}
\end{minipage}

\end{figure}
\begin{remark}
The previous example shows that suitable choices of the $\mathcal{DMG}$
lead to different combinations of memory effects within the same framework.
As a result, multiscale and heterogeneous memory processes can be modeled
while preserving the underlying operational structure.
\end{remark}


\subsection{Analytical, Differential, and Nonlocal Properties}

This subsection establishes several fundamental analytical and
calculus--type properties of the proposed dynamic--memory fractional
operators, including inverse relations, boundedness, linearity,
integration formulas, and nonlocal effects. These results illustrate
how the developed framework preserves important structures of classical
analysis while introducing new hereditary and nonlocal behaviors.

\begin{theorem}[Fundamental properties of the $\mathcal{DMFI}$ and $\mathcal{DMFD}$]
Suppose that the semigroup property holds and that the involved operators are well defined.
Then the following properties hold:
\begin{enumerate}[label=(\roman*),itemsep=2pt,topsep=2pt]
\item
\(
\mathcal D_\Phi^\alpha \mathcal I_\Phi^\alpha x=x.
\)

\item
For sufficiently smooth functions,
\(
\mathcal I_\Phi^\alpha\,{}^c \mathcal D_\Phi^\alpha x=x-x(0).
\)

\item
The operator \(\mathcal I_\Phi^\alpha\) is linear.

\item
If \(\Psi_\alpha(\cdot;\Theta)\in L^1(0,\mathcal{T})\), then
\(
\mathcal I_\Phi^\alpha:L^\infty(0,\mathcal{T})\to L^\infty(0,\mathcal{T})
\)
is bounded.

\item
Assume that the kernels \(\Psi_{1-\alpha}(\cdot;\Theta)\) form an approximate identity as
\(\alpha\to1^{-}\), that is,
\[
\int_0^t \Psi_{1-\alpha}(t-s;\Theta)f(s)\,ds \longrightarrow f(t)
\]
for every continuous function \(f\) on \([0,\mathcal{T}]\). Then, for every \(x\in \mathcal{C}^1([0,\mathcal{T}])\),
\[
\lim_{\alpha\to1^-}{}^c \mathcal{D}_\Phi^\alpha x(t)=x'(t).
\]
\end{enumerate}
\end{theorem}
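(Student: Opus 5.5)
The plan is to reduce every item to convolution algebra, using the semigroup relation $\Psi_\alpha*\Psi_\beta=\Psi_{\alpha+\beta}$ from part (i) of the first theorem of Section 3.1, together with the elementary fact that $\Psi_1=\mathcal L^{-1}\{\Phi^{-1}\}$. The structural point to watch is the following. Items (i) and (ii) are the classical Riemann--Liouville identities. Their proofs in the classical case use $\Psi_{1-\alpha}*\Psi_\alpha=\Psi_1\equiv 1$, i.e.\ $\Phi(p)=p$. For a general generator, the same argument only works if $\Psi_1$ is the Heaviside function, that is, $\Phi(p,\Theta)^{-1}=p^{-1}$. I would therefore state explicitly that the phrase ``the involved operators are well defined'' is read as including the normalization $\Psi_{1-\alpha}*\Psi_\alpha=1$ (a Sonine-type condition, in the spirit of the Sonine-pair remark above). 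I regard this as the main obstacle: without it, (i) gives $\frac{d}{dt}(\Psi_1*x)$ rather than $x$.

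For (i), write $\mathcal D_\Phi^\alpha\mathcal I_\Phi^\alpha x=\frac{d}{dt}\big(\Psi_{1-\alpha}*(\Psi_\alpha*x)\big)$. By associativity of convolution and the semigroup relation this equals $\frac{d}{dt}(\Psi_1*x)(t)=\frac{d}{dt}\int_0^t x(s)\,ds=x(t)$, where the last step uses the normalization $\Psi_1\equiv1$ and the Lebesgue differentiation theorem (or continuity of $x$).

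For (ii), write ${}^c\mathcal D_\Phi^\alpha x=\Psi_{1-\alpha}*x'$. Then $\mathcal I_\Phi^\alpha\,{}^c\mathcal D_\Phi^\alpha x=\Psi_\alpha*\Psi_{1-\alpha}*x'=\Psi_1*x'=\int_0^t x'(s)\,ds=x(t)-x(0)$, by the fundamental theorem of calculus for $x\in\mathcal C^1$. Item (iii) is immediate from linearity of the integral.

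For (iv), use a Young-type estimate: $|(\mathcal I_\Phi^\alpha x)(t)|\le\int_0^t|\Psi_\alpha(t-s)|\,|x(s)|\,ds\le\|\Psi_\alpha\|_{L^1(0,\mathcal T)}\|x\|_\infty$, which gives operator norm at most $\|\Psi_\alpha\|_{L^1}$. Item (v) repeats the earlier consistency theorem for the Caputo limit: apply the approximate-identity hypothesis with $f=x'$, which is continuous on $[0,\mathcal T]$ because $x\in\mathcal C^1$. The justification of associativity and of exchanging integrals (Fubini, valid since the kernels are in $L^1_{\rm loc}$ by (A3)) would be noted once at the start.
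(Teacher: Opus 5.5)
Your route is the paper's. Items (i)--(ii) reduce via associativity and the semigroup relation to $\Psi_1*x$ and $\Psi_1*x'$, (iii) follows from linearity of the integral, (iv) uses the same estimate $\|\mathcal I_\Phi^\alpha x\|_\infty\le\|\Psi_\alpha\|_{L^1(0,\mathcal T)}\|x\|_\infty$, and (v) applies the approximate-identity hypothesis to $f=x'$.

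The normalization you flag is exactly what the paper uses silently: it writes $\mathcal I_\Phi^1x'=\int_0^t x'(s)\,ds$ in (ii) and gives no actual argument for (i). As you note, taking Laplace transforms of $\Psi_1\equiv1$ forces $\Phi(p,\Theta)=p$. So it should be stated as a genuine extra hypothesis that reduces (i)--(ii) to the classical identities, not as a reading of ``well defined''. For example, with $\Phi=p+\lambda$ one gets $\mathcal D_\Phi^\alpha\mathcal I_\Phi^\alpha x=x-\lambda\,(e^{-\lambda\cdot}*x)\neq x$.
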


\begin{proof}
Property (i) follows from the inverse relation between the $\mathcal{DMFI}$ and the corresponding $\mathcal{RL\;DMFD}$, together
with the semigroup property.

For (ii), by the definition of the $\mathcal{C\;DMFD}$,
\[
{}^c \mathcal D_\Phi^\alpha x=\mathcal I_\Phi^{1-\alpha}x'.
\]
Applying \( \mathcal I_\Phi^\alpha\) and using the semigroup property gives
\[
\mathcal I_\Phi^\alpha\,{}^c \mathcal D_\Phi^\alpha x
=
\mathcal I_\Phi^\alpha \mathcal I_\Phi^{1-\alpha}x'
=
\mathcal I_\Phi^1x'.
\]
Hence,
\[
\mathcal I_\Phi^\alpha\,{}^cD_\Phi^\alpha x
=
\int_0^t x'(s)\,ds
=
x(t)-x(0).
\]
Property (iii) follows directly from the linearity of the convolution integral defining
\(\mathcal I_\Phi^\alpha\).

To prove (iv), let \(x\in L^\infty(0,\mathcal T)\). Then
\[
|(\mathcal I_\Phi^\alpha x)(t)|
\le
\int_0^t |\Psi_\alpha(t-s;\Theta)|\,|x(s)|\,ds
\le
\|x\|_\infty \int_0^t |\Psi_\alpha(t-s;\Theta)|\,ds.
\]
Therefore,
\[
\|\mathcal I_\Phi^\alpha x\|_\infty
\le
\|\Psi_\alpha\|_{L^1(0,\mathcal T)}\|x\|_\infty,
\]
which proves the boundedness.

Finally, for (v), from the definition of the $\mathcal{C\;DMFD}$ we have
\[
{}^c \mathcal D_\Phi^\alpha x(t)
=
\int_0^t \Psi_{1-\alpha}(t-s;\Theta)x'(s)\,ds.
\]
Since \(x\in \mathcal C^1([0,\mathcal T])\), the function \(x'\) is continuous on \([0,\mathcal T]\). Hence, by the
approximate identity assumption applied to \(f=x'\), we obtain
\[
\lim_{\alpha\to1^-}{}^c \mathcal D_\Phi^\alpha x(t)
=
x'(t).
\]
This completes the proof.
\end{proof}

\begin{theorem}[Linearity of the $\mathcal{C\;DMFD}$]
Let $0<\alpha<1$ and let $x,y$ be sufficiently smooth functions. For any constants $a,b\in\mathbb R$, we have
\[
{}^c\mathcal D_\Phi^\alpha(ax+by)(t)
=
a\,{}^c\mathcal D_\Phi^\alpha x(t)
+
b\,{}^c\mathcal D_\Phi^\alpha y(t).
\]
\label{326}
\end{theorem}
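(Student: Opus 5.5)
The plan is to argue directly from Definition~\ref{27}, which writes the $\mathcal{C\;DMFD}$ as the convolution of the fixed kernel $\Psi_{1-\alpha}(\cdot;\Theta)$ with the derivative of the function,
\[
({}^c\mathcal D_\Phi^\alpha z)(t)=\int_0^t\Psi_{1-\alpha}(t-s;\Theta)z'(s)\,ds=(\Psi_{1-\alpha}*z')(t).
\]
Since $\Psi_{1-\alpha}$ depends only on $\Phi$, $\Theta$ and $\alpha$, and not on the function being differentiated, the operator factors as ${}^c\mathcal D_\Phi^\alpha=\mathcal I_\Phi^{1-\alpha}\circ\frac{d}{dt}$. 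It is therefore a composition of two linear maps. Linearity of $\mathcal I_\Phi^{1-\alpha}$ is item (iii) of the preceding theorem on fundamental properties, applied with $1-\alpha$ in place of $\alpha$.

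Concretely, I would proceed in three steps. First, I use linearity of classical differentiation to get $(ax+by)'(s)=a\,x'(s)+b\,y'(s)$ for every $s\in[0,\mathcal T]$; this is where "sufficiently smooth" is needed, for instance $x,y\in\mathcal C^1([0,\mathcal T])$. Second, I substitute this into the defining integral, which gives the integrand $\Psi_{1-\alpha}(t-s;\Theta)\bigl(a\,x'(s)+b\,y'(s)\bigr)$. Third, I split the integral using linearity of the Lebesgue integral and pull out the constants $a$ and $b$. This yields $a\,({}^c\mathcal D_\Phi^\alpha x)(t)+b\,({}^c\mathcal D_\Phi^\alpha y)(t)$.

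The only real point of care is the third step. Splitting the integral requires each of the two integrals $\int_0^t\Psi_{1-\alpha}(t-s;\Theta)x'(s)\,ds$ and $\int_0^t\Psi_{1-\alpha}(t-s;\Theta)y'(s)\,ds$ to exist (be finite) separately, not merely their combination. I would settle this by invoking admissibility condition (A3), which gives $\Psi_{1-\alpha}\in L^1_{\mathrm{loc}}(0,\infty)$, and hence $\Psi_{1-\alpha}\in L^1(0,t)$. Then, since $x'$ and $y'$ are continuous on $[0,\mathcal T]$ and hence bounded, the estimate $\int_0^t|\Psi_{1-\alpha}(t-s;\Theta)|\,|x'(s)|\,ds\le\|x'\|_{\mathcal C}\|\Psi_{1-\alpha}\|_{L^1(0,t)}<\infty$, the same bound used in item (iv) of the preceding theorem, shows that each integral converges absolutely. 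The same holds for $y'$. With this justified, the identity holds for every $t\in[0,\mathcal T]$, and the proof is complete.
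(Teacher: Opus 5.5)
Your proposal is correct and is essentially the paper's proof: substitute $(ax+by)'=a\,x'+b\,y'$ into the defining integral $\int_0^t\Psi_{1-\alpha}(t-s;\Theta)(\cdot)'(s)\,ds$ and split it using linearity of the integral. Your extra check that each integral converges on its own is a harmless refinement that the paper leaves to its standing ``sufficiently smooth / well defined'' assumption. Strictly, (A3) is not a hypothesis of this theorem, but the right-hand side of the identity already presupposes that both ${}^c\mathcal D_\Phi^\alpha x$ and ${}^c\mathcal D_\Phi^\alpha y$ exist.
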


\begin{proof}
Using the definition of the  $\mathcal{C\;DMFD}$  we obtain
\[
{}^c\mathcal D_\Phi^\alpha(ax+by)(t)
=
\int_0^t
\Psi_{1-\alpha}(t-s;\Theta)
(ax'(s)+by'(s))\,ds.
\]
By the linearity of the integral,
\[
{}^c\mathcal D_\Phi^\alpha(ax+by)(t)
=
a\int_0^t
\Psi_{1-\alpha}(t-s;\Theta)x'(s)\,ds
+
b\int_0^t
\Psi_{1-\alpha}(t-s;\Theta)y'(s)\,ds.
\]
Hence,
\(
{}^c\mathcal D_\Phi^\alpha(ax+by)(t)
=
a\,{}^c\mathcal D_\Phi^\alpha x(t)
+
b\,{}^c\mathcal D_\Phi^\alpha y(t).
\)
\end{proof}


\begin{theorem}[Derivative of Constants]\label{327}
Let $C$ be a constant. Then
\[
{}^c\mathcal D_\Phi^\alpha\; C=0.
\]
\end{theorem}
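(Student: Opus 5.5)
The plan is to argue directly from Definition~\ref{27}, with no appeal to Laplace transforms. We regard the constant $C$ as the function $x(t)\equiv C$ on $[0,\mathcal T]$. This function belongs to $\mathcal C^1([0,\mathcal T])$ and satisfies $x'(s)=0$ for every $s\in[0,\mathcal T]$, so the Caputo operator is well defined on it whenever the kernel is.

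Substituting into the definition gives
\[
{}^c\mathcal D_\Phi^\alpha C(t)
=
\int_0^t \Psi_{1-\alpha}(t-s;\Theta)\cdot 0\,ds = 0,
\qquad t\in[0,\mathcal T].
\]
The integrand vanishes identically, so the integral is zero.

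The only point that needs care is well-definedness. We want the integral to be a genuine Lebesgue integral rather than a formal expression. For this it suffices that $\Psi_{1-\alpha}(\cdot;\Theta)$ is measurable, for instance locally integrable as in (A3). Then $s\mapsto \Psi_{1-\alpha}(t-s;\Theta)\cdot 0$ is the zero function on $[0,t]$, and its integral is $0$ for every $t$, including $t=0$, where the interval of integration is degenerate. No semigroup or admissibility structure beyond this is needed.

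As a consistency check, one could alternatively use Theorem~\ref{326} with $a=b$ and $y=x=C$. That argument only yields ${}^c\mathcal D_\Phi^\alpha(0)=0$ and does not by itself handle a general constant, so the direct computation above is the argument I would present.

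We should also remark that the statement concerns the Caputo form only. The Riemann--Liouville operator $\mathcal D_\Phi^\alpha C = C\,\Psi_{1-\alpha}(t;\Theta)$ is generally nonzero. There is no real obstacle; the only subtlety is this distinction and the integrability assumption on the kernel.
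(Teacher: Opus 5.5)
Your argument is correct and is the paper's proof: substitute $x\equiv C$ into the definition of the Caputo operator (Definition~\ref{27}) and observe that $C'=0$ makes the integrand vanish identically. One small slip is in your side remark: to extract ${}^c\mathcal D_\Phi^\alpha(0)=0$ from the linearity theorem you would take $a=-b$, not $a=b$.
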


\begin{proof}
Since $C'=0$, we get
\[
{}^c\mathcal D_\Phi^\alpha \; C
=
\int_0^t
\Psi_{1-\alpha}(t-s;\Theta)\;C'\,ds
=
0.
\]
\end{proof}


\begin{theorem}[Product Rule]
Let $x,y\in \mathcal{C}^1([0,\mathcal{T}])$. Then
\[
{}^c\mathcal D_\Phi^\alpha(xy)(t)
=
\int_0^t
\Psi_{1-\alpha}(t-s;\Theta)
\big[
x'(s)y(s)+x(s)y'(s)
\big]\,ds.
\]
Equivalently,
\[
{}^c\mathcal D_\Phi^\alpha(xy)(t)
=
\mathcal I_\Phi^{1-\alpha}(x'y)(t)
+
\mathcal I_\Phi^{1-\alpha}(xy')(t).
\]
\end{theorem}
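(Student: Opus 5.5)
The plan is to argue directly from the definition of the $\mathcal{C\;DMFD}$ (Definition \ref{27}), together with the classical Leibniz rule for $\mathcal{C}^1$ functions and the linearity already established in Theorem \ref{326}. No Laplace-transform machinery and no semigroup property should be needed; the statement is essentially the observation that the Caputo-type operator is $\mathcal I_\Phi^{1-\alpha}\circ\frac{d}{dt}$, and that $\frac{d}{dt}$ obeys the product rule.

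First, I will note that since $x,y\in\mathcal{C}^1([0,\mathcal{T}])$, the product $xy$ also lies in $\mathcal{C}^1([0,\mathcal{T}])$. Its derivative is $(xy)'=x'y+xy'$, which is continuous on $[0,\mathcal{T}]$. Hence ${}^c\mathcal D_\Phi^\alpha(xy)$ is well defined whenever the kernel $\Psi_{1-\alpha}(\cdot;\Theta)$ is locally integrable, as guaranteed under (A3). Indeed, the integrand is then the product of an $L^1(0,t)$ function with a bounded continuous function. Substituting $(xy)'$ into Definition \ref{27} gives the first displayed identity immediately.

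For the equivalent form, I will split the integral using linearity of the Lebesgue integral, exactly as in the proof of Theorem \ref{326}. The resulting two terms are convolutions of $\Psi_{1-\alpha}$ with the continuous functions $x'y$ and $xy'$. By Definition \ref{25} with order $1-\alpha>0$, these are precisely $\mathcal I_\Phi^{1-\alpha}(x'y)(t)$ and $\mathcal I_\Phi^{1-\alpha}(xy')(t)$.

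The only point requiring any care, and the only conceivable obstacle, is justifying that each of the two split integrals is finite. This is needed so that the splitting is legitimate and no $\infty-\infty$ issue arises. I would handle it by the bound $\bigl|\int_0^t\Psi_{1-\alpha}(t-s;\Theta)x'(s)y(s)\,ds\bigr|\le \|x'\|_{\mathcal C}\|y\|_{\mathcal C}\|\Psi_{1-\alpha}\|_{L^1(0,t)}$, which mirrors the estimate in part (iv) of the fundamental properties theorem. The analogous bound holds for $xy'$. Beyond this, the proof is a direct computation.
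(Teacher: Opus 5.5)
Your proposal is correct and matches the intended argument. The paper gives no separate proof of the product rule, but its proof of the chain rule uses the same one-line reasoning: substitute the classical derivative $(xy)'=x'y+xy'$ into Definition~\ref{27}, split by linearity of the integral, and identify each piece with $\mathcal I_\Phi^{1-\alpha}$ via Definition~\ref{25}. Your added bound $\|x'\|_{\mathcal C}\|y\|_{\mathcal C}\|\Psi_{1-\alpha}\|_{L^1(0,t)}$ under (A3) is a sensible extra justification, which the paper leaves implicit in its blanket ``sufficiently smooth'' assumption.
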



\begin{remark}
In general, the classical product rule does not hold for the proposed
$\mathcal{DMFD}$, namely,
\[
{}^c \mathcal D_\Phi^\alpha(xy)(t)
\neq
x(t)\,{}^c \mathcal D_\Phi^\alpha y(t)
+
y(t)\,{}^c \mathcal D_\Phi^\alpha x(t).
\]
This difference arises from the nonlocal nature of the operator, which
depends on the accumulated history through the generated kernel
\(
\Psi_{1-\alpha}.
\)
Consequently, product dynamics involve coupled hereditary effects and
cannot generally be separated as in classical local calculus.
\end{remark}


\begin{theorem}[Chain Rule]
Let $x\in \mathcal{C}^1([0,\mathcal{T}])$ and let $F\in \mathcal{C}^1(\mathbb R)$. Then
\[
{}^c\mathcal D_\Phi^\alpha(F\circ x)(t)
=
\int_0^t
\Psi_{1-\alpha}(t-s;\Theta)
F'(x(s))x'(s)\,ds.
\]
Equivalently,
\[
{}^c\mathcal D_\Phi^\alpha(F(x))(t)
=
\mathcal I_\Phi^{1-\alpha}
\big(F'(x)x'\big)(t).
\]
\end{theorem}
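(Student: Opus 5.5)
The plan is to argue directly from Definition \ref{27}, in the same way as the Product Rule just proved. The only nontrivial input is the classical chain rule for the composite $F\circ x$. The key steps, in order, are as follows.

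\textbf{Step 1: the composite is $\mathcal C^1$.} Since $x\in\mathcal C^1([0,\mathcal T])$ and $F\in\mathcal C^1(\mathbb R)$, the composite $F\circ x$ belongs to $\mathcal C^1([0,\mathcal T])$. Its derivative is $(F\circ x)'(s)=F'(x(s))\,x'(s)$ for every $s\in[0,\mathcal T]$. This derivative is continuous, being a product of the continuous functions $F'\circ x$ and $x'$.

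\textbf{Step 2: substitute into the definition.} Insert this derivative into the defining integral of the $\mathcal{C\;DMFD}$:
\[
{}^c\mathcal D_\Phi^\alpha(F\circ x)(t)=\int_0^t\Psi_{1-\alpha}(t-s;\Theta)\,(F\circ x)'(s)\,ds=\int_0^t\Psi_{1-\alpha}(t-s;\Theta)\,F'(x(s))x'(s)\,ds .
\]
This is the first asserted identity.

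\textbf{Step 3: the equivalent form.} Recognize the last integral as the $\mathcal{DMFI}$ of order $1-\alpha$ (Definition \ref{25} with $\alpha$ replaced by $1-\alpha$) applied to the function $F'(x)x'$. This yields
\[
{}^c\mathcal D_\Phi^\alpha(F(x))(t)=\mathcal I_\Phi^{1-\alpha}\big(F'(x)x'\big)(t).
\]
This mirrors the identity ${}^c\mathcal D_\Phi^\alpha x=\mathcal I_\Phi^{1-\alpha}x'$ already used in the proof of the fundamental properties theorem.

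\textbf{Main obstacle: well-definedness.} The only point requiring care is that the integral actually exists. The integrand $F'(x(\cdot))x'(\cdot)$ is continuous on $[0,\mathcal T]$, hence bounded by some constant $M$. Under admissibility condition (A3), $\Psi_{1-\alpha}(\cdot;\Theta)\in L^1_{\mathrm{loc}}(0,\infty)$. The integral is therefore absolutely convergent, with
\[
\left|{}^c\mathcal D_\Phi^\alpha(F\circ x)(t)\right|\le M\,\|\Psi_{1-\alpha}\|_{L^1(0,t)}.
\]
This is the same estimate as in part (iv) of the fundamental properties theorem.

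Finally, I will add a remark after the proof, parallel to the one following the Product Rule. In general ${}^c\mathcal D_\Phi^\alpha(F\circ x)\neq F'(x(t))\,{}^c\mathcal D_\Phi^\alpha x(t)$, because $F'(x(s))$ stays inside the nonlocal integral and cannot be factored out.
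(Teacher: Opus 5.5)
Your proposal is correct and matches the paper's proof: the paper also substitutes the classical chain rule $\frac{d}{ds}F(x(s))=F'(x(s))x'(s)$ into Definition~\ref{27} and reads off the $\mathcal I_\Phi^{1-\alpha}$ form. Your extra check that the integral exists relies on (A3), which the theorem does not state, but it is harmless since the paper leaves this implicit in its standing assumption that all integrals are well defined.
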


\begin{proof}
Since
\[
\frac{d}{ds}F(x(s))
=
F'(x(s))x'(s),
\]
the result follows directly from the definition of the $\mathcal{C\;DMFD}$ .
\end{proof}


\begin{remark}
Unlike the classical conformable derivative, the above chain rule remains nonlocal because the memory kernel $\Psi_{1-\alpha}$ acts on the whole history of $F'(x)x'$ over the interval $[0,t]$.
\end{remark}

\begin{theorem}[Integration by Parts Formula]
Let \(0<\alpha<1\), and let \(x,y\in \mathcal{C}^{1}([0,\mathcal{T}])\). Then
\[
\int_{0}^{\mathcal{T}} y(t)\,{}^{c}\mathcal D_{\Phi}^{\alpha}x(t)\,dt
=
\int_{0}^{\mathcal{T}} x'(s)
\left(
\int_{s}^{\mathcal{T}} y(t)\Psi_{1-\alpha}(t-s;\Theta)\,dt
\right)ds.
\]
\end{theorem}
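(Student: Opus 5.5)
The plan is to substitute the definition of the $\mathcal{C\;DMFD}$ (Definition \ref{27}) into the left-hand side and then interchange the order of integration over the triangular region $\{(s,t): 0\le s\le t\le \mathcal T\}$. Concretely, I would write
\[
\int_{0}^{\mathcal{T}} y(t)\,{}^{c}\mathcal D_{\Phi}^{\alpha}x(t)\,dt
=
\int_{0}^{\mathcal{T}}\int_{0}^{t} y(t)\,\Psi_{1-\alpha}(t-s;\Theta)\,x'(s)\,ds\,dt .
\]
The same region can also be described as $0\le s\le\mathcal T$, $s\le t\le \mathcal T$. After swapping the integrals and pulling out $x'(s)$, which does not depend on $t$, we get exactly the right-hand side of the statement.

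The one substantive step is justifying the interchange by Fubini--Tonelli. For this I would use admissibility condition (A3), which gives $\Psi_{1-\alpha}(\cdot;\Theta)\in L^1_{\mathrm{loc}}(0,\infty)$, so in particular $\Psi_{1-\alpha}\in L^1(0,\mathcal T)$. Since $x,y\in\mathcal C^1([0,\mathcal T])$, both $y$ and $x'$ are bounded on $[0,\mathcal T]$. By Tonelli's theorem,
\[
\int_0^{\mathcal T}\!\int_0^t |y(t)|\,|\Psi_{1-\alpha}(t-s;\Theta)|\,|x'(s)|\,ds\,dt
\le \|y\|_{\mathcal C}\,\|x'\|_{\mathcal C}\int_0^{\mathcal T}\!\int_0^{t}|\Psi_{1-\alpha}(u;\Theta)|\,du\,dt
\le \mathcal T\,\|y\|_{\mathcal C}\,\|x'\|_{\mathcal C}\,\|\Psi_{1-\alpha}\|_{L^1(0,\mathcal T)}<\infty,
\]
using the substitution $u=t-s$ in the inner integral. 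The integrand $(s,t)\mapsto y(t)\Psi_{1-\alpha}(t-s;\Theta)x'(s)\chi_{\{s\le t\}}$ is therefore absolutely integrable on $[0,\mathcal T]^2$, and Fubini's theorem permits the swap.

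This integrability check is the only place a hypothesis on the kernel is really needed, so I regard it as the main (mild) obstacle. Local integrability (A3), or the blanket assumption that the involved integrals are well defined, is what makes it go through. The same estimate also shows that the inner integral $\int_s^{\mathcal T} y(t)\Psi_{1-\alpha}(t-s;\Theta)\,dt$ exists for almost every $s$ and defines an integrable function of $s$, so the right-hand side is meaningful. Optionally, I would remark that this inner integral is the right-sided $\mathcal{DMFI}$ $(\mathcal I^{1-\alpha}_{\Phi,\mathcal T-}y)(s)$, which puts the identity in the form $\int_0^{\mathcal T} y\,{}^c\mathcal D^\alpha_\Phi x\,dt=\int_0^{\mathcal T} x'\,\mathcal I^{1-\alpha}_{\Phi,\mathcal T-}y\,ds$.
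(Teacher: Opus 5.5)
Your proposal is correct and follows the paper's proof: substitute Definition~\ref{27} and interchange the order of integration over the triangle $0\le s\le t\le\mathcal{T}$. Your Fubini--Tonelli justification via $\Psi_{1-\alpha}\in L^1(0,\mathcal{T})$ is a worthwhile addition, since the paper's proof leaves this step implicit and relies on its blanket well-definedness assumption.
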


\begin{proof}
Using the definition of the $\mathcal{C\;DMFD}$, we have
\[
\int_{0}^{\mathcal{T}} y(t)\,{}^{c}\mathcal D_{\Phi}^{\alpha}x(t)\,dt
=
\int_{0}^{\mathcal{T}} y(t)
\left(
\int_{0}^{t}\Psi_{1-\alpha}(t-s;\Theta)x'(s)\,ds
\right)dt.
\]
By changing the order of integration over the triangular region
\(
0\leq s\leq t\leq \mathcal{T},
\)
we obtain
\[
\int_{0}^{\mathcal{T}} y(t)\,{}^{c}\mathcal D_{\Phi}^{\alpha}x(t)\,dt
=
\int_{0}^{\mathcal{T}} x'(s)
\left(
\int_{s}^{\mathcal{T}} y(t)\Psi_{1-\alpha}(t-s;\Theta)\,dt
\right)ds.
\]
\end{proof}

\begin{remark}
The inner integral in the above formula represents a right--sided $\mathcal{DMFI}$ acting on \(y\). Thus, the integration by parts formula connects the left--sided $\mathcal{C\;DMFD}$ of \(x\) with a right--sided memory action on \(y\).
\end{remark}

\begin{theorem}[Dynamic--Memory Taylor--Type Formula]
Let \(0<\alpha<1\), and let \(x\) be sufficiently smooth on \([0,\mathcal{T}]\). Then
\[
x(t)
=
x(0)
+
\mathcal I_{\Phi}^{\alpha}
{}^{c}\mathcal D_{\Phi}^{\alpha}x(t),
\qquad t\in[0,\mathcal{T}].
\]
Equivalently,
\[
x(t)
=
x(0)
+
\int_{0}^{t}
\Psi_{\alpha}(t-s;\Theta)
{}^{c}\mathcal D_{\Phi}^{\alpha}x(s)\,ds.
\]
\end{theorem}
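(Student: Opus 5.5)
The plan is to reduce the Taylor-type formula to property (ii) of the Fundamental properties theorem (the inverse relation $\mathcal I_\Phi^\alpha\,{}^c\mathcal D_\Phi^\alpha x=x-x(0)$), re-deriving it explicitly so that the hypotheses are visible.

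\emph{Step 1: rewrite the Caputo derivative as a fractional integral of $x'$.} By the definition of the $\mathcal{C\;DMFD}$, ${}^c\mathcal D_\Phi^\alpha x=\Psi_{1-\alpha}*x'=\mathcal I_\Phi^{1-\alpha}x'$. Here ``sufficiently smooth'' is read as $x\in\mathcal C^1([0,\mathcal T])$, so that $x'$ is continuous and bounded. Admissibility (A3), $\Psi_{1-\alpha}\in L^1_{\mathrm{loc}}$, then makes this convolution a well-defined locally integrable function.

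\emph{Step 2: compose with $\mathcal I_\Phi^\alpha$ and use the semigroup law.} Apply $\mathcal I_\Phi^\alpha$ and invoke part (ii) of the semigroup theorem, i.e. $\Psi_\alpha*\Psi_{1-\alpha}=\Psi_1$ from (A4). This gives
\[
\mathcal I_\Phi^\alpha\,{}^c\mathcal D_\Phi^\alpha x=\Psi_\alpha*(\Psi_{1-\alpha}*x')=(\Psi_\alpha*\Psi_{1-\alpha})*x'=\Psi_1*x'.
\]
Associativity of the convolution requires Fubini on the simplex $0\le r\le s\le t$, and this is justified because $|\Psi_\alpha|*|\Psi_{1-\alpha}|*|x'|$ is finite for locally integrable kernels and bounded $x'$.

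\emph{Step 3: identify $\Psi_1*x'$ with $x(t)-x(0)$.} This is the main obstacle. The semigroup law yields $\Psi_1=\mathcal L^{-1}\{\Phi(p,\Theta)^{-1}\}$, and $\Psi_1*x'=\int_0^t x'$ holds only when $\Psi_1\equiv1$, i.e. $\Phi(p,\Theta)=p$. The earlier proof of property (ii) tacitly identifies $\mathcal I_\Phi^1$ with ordinary integration.

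I would therefore make this normalization explicit. Either assume $\Phi(p,\Theta)^{-1}=p^{-1}$, which is the consistency case treated in the reduction theorem, or, more generally, assume $\Psi_1$ is the Heaviside function. Under that assumption, $\Psi_1*x'(t)=\int_0^t x'(s)\,ds=x(t)-x(0)$ by the fundamental theorem of calculus, and rearranging gives the first form of the formula. The second displayed form then follows by writing $\mathcal I_\Phi^\alpha$ out as the convolution integral from its definition.

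If one wants to avoid this normalization, the honest general statement is $x(t)=x(0)+\bigl(\mathcal I_\Phi^\alpha\,{}^c\mathcal D_\Phi^\alpha x\bigr)(t)+\bigl((\mathbf 1-\Psi_1)*x'\bigr)(t)$. I would remark that the correction term vanishes exactly under the standing hypothesis that $\mathcal I_\Phi^1$ is the primitive operator, which is the same hypothesis under which Fundamental property (ii) was stated.
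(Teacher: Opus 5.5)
Your Steps 1--2 are exactly the paper's argument. The paper's proof only cites the Caputo-type inverse formula (property (ii) of the fundamental-properties theorem). That proof reads $\mathcal I_\Phi^\alpha\,{}^c\mathcal D_\Phi^\alpha x=\mathcal I_\Phi^\alpha\mathcal I_\Phi^{1-\alpha}x'=\mathcal I_\Phi^1x'$ and then silently sets $\mathcal I_\Phi^1x'=\int_0^t x'(s)\,ds$.

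Your diagnosis of Step 3 is correct, and the defect is in the statement itself, not in your argument. Take the paper's own tempered generator $\Phi(p,\Theta)=p+\lambda$, so that $\Psi_1(t)=e^{-\lambda t}$. For $x(t)=t$ the right-hand side equals $x(0)+(\Psi_1*1)(t)=(1-e^{-\lambda t})/\lambda$, which is strictly less than $t$ for every $t>0$. So no proof can reach the displayed formula without an extra hypothesis. Your qualified version, with the correction term $\bigl((\mathbf 1-\Psi_1)*x'\bigr)(t)$ in general, is the right thing to prove, and your Fubini justification in Step 2 is fine.

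Two points in your repair need correcting.

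First, your alternatives ``assume $\Phi(p,\Theta)^{-1}=p^{-1}$'' and ``more generally, assume $\Psi_1$ is the Heaviside function'' are the same condition. One is not a generalization of the other. All kernels are generated by the single symbol $\Phi$, so $\Psi_1=\mathcal L^{-1}\{\Phi(p,\Theta)^{-1}\}$. Hence $\Psi_1=1$ a.e. on $(0,\infty)$ holds if and only if $\Phi(p,\Theta)=p$.

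The condition is also necessary, not merely sufficient. Inserting $x(t)=t$ into the claimed formula gives $\int_0^t\Psi_1(s)\,ds=t$ on $[0,\mathcal T]$, so $\Psi_1=1$ a.e. on $(0,\mathcal T)$. If this is to hold for every $\mathcal T>0$, then $\Phi(p,\Theta)=p$. As a statement about all smooth $x$, the Taylor-type formula is therefore exactly the classical Caputo result and has no genuinely dynamic-memory content.

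Second, your closing sentence overstates what the paper assumes. Property (ii) is stated only under the semigroup property and well-definedness of the operators. The normalization $\mathcal I_\Phi^1x'=\int_0^t x'(s)\,ds$ never appears as a hypothesis; it is used only tacitly in the proof, as you yourself observed in Step 3.
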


\begin{proof}
The result follows directly from the Caputo--type inverse formula
\[
\mathcal I_{\Phi}^{\alpha}
{}^{c}\mathcal D_{\Phi}^{\alpha}x
=
x-x(0).
\]
Therefore,
\[
x(t)
=
x(0)
+
\mathcal I_{\Phi}^{\alpha}
{}^{C}\mathcal D_{\Phi}^{\alpha}x(t),
\]
which gives the stated representation.
\end{proof}

\begin{remark}
The above representation can be regarded as a first--order dynamic--memory fractional Taylor formula. In contrast to the classical local expansion, the correction term incorporates accumulated hereditary effects through the  $\mathcal{DMK}$, thus preserving the nonlocal nature of the underlying dynamics.
\end{remark}

\begin{remark}[Dynamic--memory interpretation of Rolle--type behavior]
Let
\(
x\in \mathcal{C}^{1}([a,b])
\)
satisfy
\(
x(a)=x(b).
\)
Then, by the classical Rolle's theorem, there exists
\(
\xi\in(a,b)
\)
such that
\(
x'(\xi)=0.
\)
However, the $\mathcal{C\;DMFD}$
depends on the accumulated hereditary contribution of $x'$ over the interval $[0,t]$ through the generated kernel $\Psi_{1-\alpha}$. Consequently, stationary behavior at an isolated point does not necessarily determine the value of the $\mathcal{DMFD}$. Instead, the operator reflects a history--dependent balance of past dynamics, providing a generalized interpretation of Rolle--type behavior within the proposed $\mathcal{DMFF}$.
\end{remark}

\begin{remark}[Dynamic--memory mean value formula]
Let \(x\in C^1([a,b])\), and assume that
\[
\Psi_{1-\alpha}(t-s;\Theta)\ge 0,
\qquad a\le s\le t\le b.
\]
Then, for each fixed \(t\in(a,b]\), there exists
\(\xi_t\in(a,t)\) such that
\[
{}^c \mathcal D_{\Phi,a+}^{\alpha}x(t)
=
x'(\xi_t)
\int_a^t \Psi_{1-\alpha}(t-s;\Theta)\,ds.
\]
Equivalently,
\[
{}^c \mathcal D_{\Phi,a+}^{\alpha}x(t)
=
x'(\xi_t)\,
\mathcal I_{\Phi,a+}^{1-\alpha}1(t)
\]
provided
\(
\Psi_{1-\alpha}
\)
is continuous and nonnegative.

Thus, the $\mathcal{C\;DMFD}$ can be interpreted as
a memory--weighted mean value of the classical derivative over \([a,t]\).
Unlike the classical mean value theorem, the weighting is determined by the
generated kernel \(\Psi_{1-\alpha}\), and therefore depends on the selected
dynamic memory generator \(\Phi\).

In particular, if we choose
\(
\Phi(p,\Theta)=p,
\)
then
\[
\Psi_{1-\alpha}(t-s;\Theta)
=
\frac{(t-s)^{-\alpha}}{\Gamma(1-\alpha)}.
\]
Thus, the above formula becomes
\[
{}^c \mathcal D_{a+}^{\alpha}x(t)
=
x'(\xi_t)
\frac{(t-a)^{1-\alpha}}{\Gamma(2-\alpha)}.
\]
Passing to the limit as \(\alpha\to1^{-}\), we obtain
\[
\lim_{\alpha\to1^-}{}^c \mathcal D_{a+}^{\alpha}x(t)
=
x'(\xi_t),
\]
which is consistent with the classical mean--value interpretation. In this sense,
the dynamic--memory formula recovers the classical local behavior when the
generator is reduced to the standard choice \(\Phi(p,\Theta)=p\) and
\(\alpha\to1^{-}\).
\end{remark}

\subsection{Dynamic--Memory Fractional Derivatives of Power and Polynomial Functions}

Power functions play a fundamental role in fractional calculus and numerical approximation,
since they frequently appear in consistency analysis, interpolation procedures, spectral
methods, predictor--corrector schemes, and benchmark problems. Therefore, it is important
to characterize the action of the proposed $\mathcal{DMFO}$ on such functions.

\begin{theorem}[$\mathcal{DMFD}$ of power functions]\label{328}
Let
\[
x(t)
=
(t-a)^b,
\qquad
b>0,
\]
and suppose that the 
\(
\Psi_{1-\alpha}(\cdot;\Theta)
\)
belongs to
\(
L^1_{\mathrm{loc}}(0,\infty).
\)
Then, the $\mathcal{C\;DMFD}$ satisfies
\[
{}^c \mathcal D_\Phi^\alpha
(t-a)^b
=
b
\int_a^t
\Psi_{1-\alpha}(t-s;\Theta)
(s-a)^{b-1}
\,ds.
\]
Equivalently,

\[
{}^c \mathcal D_\Phi^\alpha
(t-a)^b
=
b
\Big(
\Psi_{1-\alpha}
*
(\cdot-a)^{b-1}
\Big)(t).
\]

\end{theorem}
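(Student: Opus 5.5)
The plan is to read the statement as an identity for the left--sided Caputo operator ${}^c\mathcal D_{\Phi,a+}^\alpha$ with base point $a$, and to substitute $x(t)=(t-a)^b$ directly into its definition. Because $x$ may fail to be $\mathcal C^1$ at $s=a$ when $0<b<1$, I will not use the $\mathcal C^1$ hypothesis. Instead I will use the fact that $x$ is absolutely continuous on $[a,t]$, with
\[
x'(s)=b(s-a)^{b-1}\in L^1(a,t).
\]
This is the only regularity the Caputo integral actually needs.

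First I compute the derivative, $x'(s)=b(s-a)^{b-1}$ for $s\in(a,t]$. Substituting into the definition gives
\[
{}^c\mathcal D_{\Phi,a+}^\alpha (t-a)^b=\int_a^t \Psi_{1-\alpha}(t-s;\Theta)\,b(s-a)^{b-1}\,ds .
\]
By linearity of the integral (as in Theorem~\ref{326}), the constant $b$ can be pulled out, which yields the first asserted formula.

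Next I justify that this integral is finite for almost every $t$, and hence that the operator is well defined. Both $\Psi_{1-\alpha}(\cdot;\Theta)$ and $(\cdot-a)^{b-1}$ lie in $L^1(0,t-a)$ after the shift $s\mapsto s-a$. The first is in $L^1_{\mathrm{loc}}$ by hypothesis, and the second is integrable because $b-1>-1$. By Young's inequality (equivalently, Fubini--Tonelli applied to $|\Psi_{1-\alpha}|*|\cdot|^{b-1}$), their convolution is locally integrable and finite almost everywhere. If $\Psi_{1-\alpha}$ is also bounded near the diagonal, the convolution is in fact finite for every $t$.

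Finally, for the convolution form, I set $u=s-a$ and extend $(\cdot-a)^{b-1}$ by zero for arguments below $a$. The integral then becomes
\[
\int_0^{t-a}\Psi_{1-\alpha}(t-a-u;\Theta)\,u^{b-1}\,du ,
\]
which is exactly $\bigl(\Psi_{1-\alpha}*(\cdot-a)^{b-1}\bigr)(t)$ under the paper's convolution convention, with the shifted kernel.

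The main obstacle is the regularity point at $s=a$ when $b<1$. The argument handles it by interpreting the Caputo derivative for absolutely continuous functions rather than $\mathcal C^1$ functions, together with the local integrability argument for the convolution; the rest of the proof is routine.
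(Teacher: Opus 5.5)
Your proposal is correct and is essentially the paper's proof. The paper simply substitutes $x'(s)=b(s-a)^{b-1}$ into the Caputo definition, pulls out $b$, and reads off the convolution form; your absolute-continuity reading for $0<b<1$ and the $L^1*L^1$ finiteness argument are extra care the paper does not supply.

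One slip in your aside: finiteness for every $t>a$ requires $\Psi_{1-\alpha}$ to be bounded on compact subsets of $(0,\infty)$, i.e.\ away from $s=t$, not near it. For $0<b<1$ the dangerous point is $s=a$, where $(s-a)^{b-1}$ blows up while $\Psi_{1-\alpha}$ is evaluated near $t-a$.
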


\begin{proof}
For
\(
x(t)
=
(t-a)^b 
\) and by
Definition \ref{27}, we have

\[
{}^c \mathcal D_\Phi^\alpha
(t-a)^b
=
b
\int_a^t
\Psi_{1-\alpha}(t-s;\Theta)
(s-a)^{b-1}
\,ds.
\]
The convolution form follows directly from the definition of convolution.
\end{proof}

\begin{remark}
The previous result recovers several known formulas through suitable choices of the $\mathcal{DMG}$.
If
\(
\Phi(p,\Theta)
=
p,
\)
then
\[
\Psi_{1-\alpha}(t)
=
\frac{t^{-\alpha}}
{\Gamma(1-\alpha)},
\]
and consequently
\[
{}^c \mathcal D^\alpha
(t-a)^b
=
\frac{\Gamma(b+1)}
{\Gamma(b+1-\alpha)}
(t-a)^{b-\alpha},
\]
which is precisely the classical Caputo fractional derivative of power functions.

Similarly, alternative selections of
\(
\Phi
\)
generate tempered, Mittag--Leffler, nonsingular,
or other generalized dynamic--memory power laws.
Therefore, the proposed $\mathcal{DMFF}$ extends the classical power--law formulas through
the $\mathcal{DMK}$.
\end{remark}

\begin{remark}
Unlike the classical fractional derivative, where the derivative of
\(
(t-a)^b
\)
remains a pure power function, the proposed $\mathcal{DMFD}$ depends on the generated kernel
\(
\Psi_{1-\alpha}.
\)
Consequently, the resulting derivative reflects the interaction between the local growth
of the polynomial term and the hereditary structure induced by the $\mathcal{DMG}$. Different
choices of the generator therefore lead to different memory--modulated power laws.
\end{remark}


The previous result naturally extends to polynomial functions.

\begin{theorem}[$\mathcal{DMFD}$ of polynomial functions]
Let

\[
P_n(t)
=
\sum_{k=0}^{n}
a_k
(t-a)^k,
\]
where
\(
a_k\in\mathbb R,
\;
k=0,1,\ldots,n.
\)
Then
\[
{}^c \mathcal D_\Phi^\alpha
P_n(t)
=
\sum_{k=1}^{n}
k a_k
\int_a^t
\Psi_{1-\alpha}(t-s;\Theta)
(s-a)^{k-1}
\,ds.
\]
Equivalently, we get
\[
{}^c \mathcal D_\Phi^\alpha
P_n(t)
=
\sum_{k=1}^{n}
k a_k
\Big(
\Psi_{1-\alpha}
*
(\cdot-a)^{k-1}
\Big)(t).
\]

\end{theorem}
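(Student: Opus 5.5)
The plan is to reduce the statement to the two results immediately preceding it: linearity of the $\mathcal{C\;DMFD}$ (Theorem~\ref{326}) and its action on power functions (Theorem~\ref{328}). As in Theorem~\ref{328}, the operator is understood in its left-sided form based at $a$, i.e.\ ${}^c\mathcal D^\alpha_{\Phi,a+}$, so that the lower limit of integration is $a$. We also assume throughout, as in Theorem~\ref{328}, that $\Psi_{1-\alpha}(\cdot;\Theta)\in L^1_{\mathrm{loc}}(0,\infty)$. Since $P_n$ is a polynomial, it lies in $\mathcal C^1([a,\mathcal T])$ and is in particular sufficiently smooth for Theorem~\ref{326} to apply.

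First I will apply Theorem~\ref{326} repeatedly, which amounts to a finite induction on the number of terms. This gives
\[
{}^c\mathcal D_\Phi^\alpha P_n(t)=\sum_{k=0}^{n}a_k\,{}^c\mathcal D_\Phi^\alpha (t-a)^k .
\]

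Next I split off the term $k=0$. Here $(t-a)^0\equiv1$ is a constant, so Theorem~\ref{327} gives ${}^c\mathcal D_\Phi^\alpha a_0=0$. This is why the final sum starts at $k=1$.

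For each $k\ge1$ I will invoke Theorem~\ref{328} with $b=k>0$. It yields
\[
{}^c\mathcal D_\Phi^\alpha (t-a)^k=k\int_a^t\Psi_{1-\alpha}(t-s;\Theta)(s-a)^{k-1}\,ds .
\]
Each of these integrals is finite: $(s-a)^{k-1}$ is bounded on $[a,t]$ and $\Psi_{1-\alpha}$ is locally integrable. Hence every summand is well defined, and summing against the coefficients $a_k$ gives the first formula. The convolution form then follows term by term, exactly as in Theorem~\ref{328}, by reading $\int_a^t\Psi_{1-\alpha}(t-s;\Theta)(s-a)^{k-1}\,ds$ as $\bigl(\Psi_{1-\alpha}*(\cdot-a)^{k-1}\bigr)(t)$ with the lower limit shifted to $a$.

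There is no genuine obstacle in this argument. The only point needing care is bookkeeping. The operator in Definition~\ref{27} is based at $0$, while the polynomial is centered at $a$, so one must consistently use the $a$-based left-sided operator. Alternatively, one can take $a=0$, in which case both formulations coincide.
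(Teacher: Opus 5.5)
Your proposal is correct and follows the paper's proof essentially step for step: linearity (Theorem~\ref{326}) splits the sum, Theorem~\ref{327} eliminates the constant term $a_0$, and Theorem~\ref{328} with $b=k$ handles each $k\ge1$. Your remark about using the $a$-based left-sided operator (or taking $a=0$) is a worthwhile clarification. The paper leaves this point implicit, even though Definition~\ref{27} integrates from $0$.
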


\begin{proof}
Using the linearity property established in Theorem \ref{326} together with Theorem \ref{328} applied term--by--term, we obtain
\[
{}^c \mathcal D_\Phi^\alpha
P_n(t)
=
\sum_{k=0}^{n}
a_k
{}^c \mathcal D_\Phi^\alpha
(t-a)^k.
\]
Since \(a_0\) is constant, Theorem \ref{327} implies that
\(
{}^c \mathcal D_\Phi^\alpha(a_0)=0.
\)
Hence,
\[
{}^c \mathcal D_\Phi^\alpha
P_n(t)
=
\sum_{k=1}^{n}
k a_k
\int_a^t
\Psi_{1-\alpha}(t-s;\Theta)
(s-a)^{k-1}
\,ds.
\]
The proof is complete.
\end{proof}

\begin{remark}For
\(
\Phi(p,\Theta)=p,
\)
the previous theorem reduces to

\[
{}^c \mathcal D^\alpha
P_n(t)
=
\sum_{k=1}^{n}
a_k
\frac{\Gamma(k+1)}
{\Gamma(k+1-\alpha)}
(t-a)^{k-\alpha},
\]
which coincides with the classical Caputo derivative of polynomial functions.
\end{remark}

\begin{remark}
The obtained formulas for power and polynomial functions are useful in the development
and analysis of numerical methods for the proposed framework. They may be applied in
consistency and convergence analysis, predictor--corrector schemes, spectral methods,
collocation techniques, interpolation procedures, and benchmark tests for validating
numerical approximations \cite{Diethelm2002,ZayernouriKarniadakis2013,LiZeng2015}.
\end{remark}

\begin{remark}
The previous results suggest that orthogonal polynomial families, including Legendre,
Chebyshev, Bernstein, and Jacobi polynomials, may be generalized within the present
dynamic--memory framework. Such extensions could provide efficient basis functions for
developing spectral methods adapted to heterogeneous and multiscale memory effects.
\end{remark}

\subsection{Laplace Operational Calculus}

For simplicity, throughout this subsection we write
\[
X(p)=\mathcal L\{x(t)\}(p).
\]
Since
\[
\mathcal L\{\Psi_\alpha(t;\Theta)\}(p)
=
\Phi(p,\Theta)^{-\alpha},
\]
the $\mathcal{DMFI}$ satisfies the following Laplace representation.


\begin{theorem}[Laplace Transform of the $\mathcal{DMFI}$]
The Laplace transform of the $\mathcal{DMFI}$ is
\[
\mathcal L
\{
\mathcal I_\Phi^\alpha x
\}(p)
=
\Phi(p,\Theta)^{-\alpha}X(p).
\]
\end{theorem}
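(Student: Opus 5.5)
The proof rests on the convolution theorem for the Laplace transform. By Definition \ref{25}, the $\mathcal{DMFI}$ is the finite convolution
\[
(\mathcal I_\Phi^\alpha x)(t)=(\Psi_\alpha*x)(t)=\int_0^t\Psi_\alpha(t-s;\Theta)x(s)\,ds .
\]
So the plan is to show that $\mathcal L\{\Psi_\alpha*x\}=\mathcal L\{\Psi_\alpha\}\cdot\mathcal L\{x\}$. Then I substitute $\mathcal L\{\Psi_\alpha\}(p)=\Phi(p,\Theta)^{-\alpha}$, which holds because $\Psi_\alpha$ is defined in Definition \ref{23} as the inverse Laplace transform of $\Phi(p,\Theta)^{-\alpha}$.

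\textbf{Standing assumptions.} I work under the admissibility conditions (A2)--(A3), so $\Psi_\alpha\in L^1_{\mathrm{loc}}(0,\infty)$ and its Laplace transform exists. I also assume $x$ is of exponential order, as the paper stipulates whenever $X(p)$ is used. Fix $p$ large enough that both $\int_0^\infty e^{-pt}|\Psi_\alpha(t;\Theta)|\,dt$ and $\int_0^\infty e^{-pt}|x(t)|\,dt$ are finite.

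\textbf{Computation.} Write
\[
\mathcal L\{\mathcal I_\Phi^\alpha x\}(p)=\int_0^\infty e^{-pt}\int_0^t\Psi_\alpha(t-s;\Theta)x(s)\,ds\,dt .
\]
Interchange the order of integration over the region $0\le s\le t<\infty$, the same triangular-region interchange used in the integration-by-parts theorem. Then substitute $u=t-s$ in the inner integral and split $e^{-pt}=e^{-pu}e^{-ps}$. This factors the double integral as
\[
\int_0^\infty e^{-pu}\Psi_\alpha(u;\Theta)\,du\cdot\int_0^\infty e^{-ps}x(s)\,ds=\Phi(p,\Theta)^{-\alpha}X(p).
\]

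\textbf{Main obstacle.} The only nontrivial step is justifying the interchange. I will apply Tonelli's theorem to the absolute integrand $e^{-pt}|\Psi_\alpha(t-s;\Theta)|\,|x(s)|$. After the same substitution, this double integral equals the product of the two absolute Laplace integrals, which are finite for the chosen $p$. Fubini's theorem then licenses the interchange in the signed computation.

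This gives the identity for all sufficiently large real $p$. If needed, it extends to the common half-plane of absolute convergence by analyticity of both sides.
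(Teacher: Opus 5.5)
Your proposal is correct and follows the same route as the paper, which gives no separate proof: it notes that $\mathcal L\{\Psi_\alpha\}(p)=\Phi(p,\Theta)^{-\alpha}$ and applies the convolution theorem to $\mathcal I_\Phi^\alpha x=\Psi_\alpha*x$. Your Tonelli--Fubini argument for the interchange of integration supplies the justification the paper takes for granted, and it already works for every $p$ in the common region of absolute convergence, so the closing appeal to analyticity is unnecessary.
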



\begin{theorem}[Laplace Transform of the $\mathcal{C\;DMFD}$]
Let $x$ be of exponential order. Then
\[
\mathcal L
\{
{}^c\mathcal D_\Phi^\alpha x
\}(p)
=
\Phi(p,\Theta)^{\alpha-1}
\big(
pX(p)-x(0)
\big).
\]
\end{theorem}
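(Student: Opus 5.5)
The plan is to recognise the Caputo-type operator as a convolution and apply the convolution theorem for the Laplace transform. By Definition \ref{27},
\[
{}^c\mathcal D_\Phi^\alpha x=\Psi_{1-\alpha}*x'=\mathcal I_\Phi^{1-\alpha}x'.
\]
The preceding theorem on the Laplace transform of the $\mathcal{DMFI}$, applied with order $1-\alpha$ and the function $x'$, then gives
\[
\mathcal L\{{}^c\mathcal D_\Phi^\alpha x\}(p)=\Phi(p,\Theta)^{-(1-\alpha)}\,\mathcal L\{x'\}(p).
\]
The factor $\Phi(p,\Theta)^{-(1-\alpha)}=\mathcal L\{\Psi_{1-\alpha}\}(p)$ comes directly from Definition \ref{23}.

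The second step is the classical derivative rule
\[
\mathcal L\{x'\}(p)=pX(p)-x(0).
\]
This follows by integrating by parts in $\int_0^\infty e^{-pt}x'(t)\,dt$. The boundary term $e^{-pt}x(t)$ vanishes as $t\to\infty$ for $p$ larger than the exponential order of $x$, and this is exactly where the hypothesis that $x$ is of exponential order is used. We also assume, in line with the paper's standing convention, that $x$ is continuously differentiable on $[0,\infty)$ with $x'$ locally integrable. Substituting this rule into the first identity gives $\Phi(p,\Theta)^{\alpha-1}\bigl(pX(p)-x(0)\bigr)$, which is the claim.

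The main obstacle is justifying the convolution theorem $\mathcal L\{\Psi_{1-\alpha}*x'\}=\mathcal L\{\Psi_{1-\alpha}\}\,\mathcal L\{x'\}$ in this generality. Two things are needed. First, $x'$ must itself be of exponential order, or at least Laplace transformable; we take this as part of the smoothness convention. Second, $\Psi_{1-\alpha}$ must be locally integrable with an absolutely convergent Laplace integral for large $p$; this is guaranteed under admissibility (A2)--(A3). With both in place, Fubini's theorem on the region $0\le s\le t<\infty$ with weight $e^{-pt}=e^{-p(t-s)}e^{-ps}$ yields the factorisation for all $p$ in the common half-plane of absolute convergence. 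The identity therefore holds for $p$ sufficiently large. Since $\Phi$ is only given on $(0,\infty)$, we should state the result for real $p$ exceeding the abscissae of convergence of both $x$ and $\Psi_{1-\alpha}$.
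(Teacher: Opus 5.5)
Your proposal is correct and follows the paper's proof: write ${}^c\mathcal D_\Phi^\alpha x=\Psi_{1-\alpha}*x'$, apply the convolution theorem with $\mathcal L\{\Psi_{1-\alpha}\}=\Phi(p,\Theta)^{\alpha-1}$, and substitute $\mathcal L\{x'\}=pX(p)-x(0)$. The paper states these steps without justification, so your discussion of half-planes of convergence and Fubini adds rigor rather than changing the route; one small correction is that (A2)--(A3) alone do not force absolute convergence of $\mathcal L\{\Psi_{1-\alpha}\}$, but you do not need it, since absolute convergence of $\mathcal L\{x'\}$ together with mere convergence of $\mathcal L\{\Psi_{1-\alpha}\}$ already gives the convolution theorem.
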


\begin{proof}
From the definition,
\[
{}^c\mathcal D_\Phi^\alpha x
=
\Psi_{1-\alpha}*x'.
\]
Taking the Laplace transform yields
\[
\mathcal L
\{
{}^c\mathcal D_\Phi^\alpha x
\}(p)
=
\mathcal L
\{
\Psi_{1-\alpha}
\}(p)
\mathcal L
\{
x'
\}(p).
\]
Since
\[
\mathcal L
\{
\Psi_{1-\alpha}
\}(p)
=
\Phi(p,\Theta)^{\alpha-1},
\]
and
\[
\mathcal L\{x'\}(p)
=
pX(p)-x(0),
\]
we obtain
\[
\mathcal L
\{
{}^c\mathcal D_\Phi^\alpha x
\}(p)
=
\Phi(p,\Theta)^{\alpha-1}
\big(
pX(p)-x(0)
\big).
\]
\end{proof}


\begin{remark}
If the $\mathcal{C\;DMFD}$ is defined by
\[
\mathcal L
\{
{}^c\mathcal D_\Phi^\alpha x
\}(p)
=
\Phi(p,\Theta)^\alpha X(p)
-
\Phi(p,\Theta)^{\alpha-1}x(0),
\]
then the classical Laplace variable \(p\), associated with ordinary differentiation,
is replaced by the dynamic memory generator \(\Phi\). Consequently,
different choices of \(\Phi\) produce different memory behaviors within the same
operational framework.
\end{remark}

\subsection{Dynamic--Memory Mittag--Leffler Functions}

Mittag--Leffler functions play a fundamental role in fractional calculus and memory--dependent dynamical systems, similarly to the role of the exponential function in classical differential equations. In the proposed $\mathcal{DMFF}$, the generalized Mittag--Leffler functions arise naturally through the $\mathcal{DMG}$ $\Phi$ and provide a unified representation of generalized relaxation and hereditary phenomena.

\begin{definition}
The generalized $\mathcal{DMMLF}$
associated with the $\mathcal{DMG}$ is defined by
\[
E_{\Phi,\alpha}(\lambda,t)
=
\mathcal L^{-1}
\left\{
\frac{
\Phi(p,\Theta)^{\alpha-1}
}{
\Phi(p,\Theta)^\alpha-\lambda
}
\right\}(t).
\]
\end{definition}
\noindent Mittag--Leffler functions play a central role in fractional dynamics and relaxation phenomena; see \cite{Haubold2011,Gorenflo2014}.

\begin{theorem}
The function $E_{\Phi,\alpha}(\lambda,t)$ satisfies
\[
{}^c\mathcal D_\Phi^\alpha
E_{\Phi,\alpha}(\lambda,t)
=
\lambda
E_{\Phi,\alpha}(\lambda,t).
\]
\end{theorem}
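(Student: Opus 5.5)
The plan is to work entirely in the Laplace domain and then invert. Write $x(t)=E_{\Phi,\alpha}(\lambda,t)$, so that by definition
\[
X(p)=\frac{\Phi(p,\Theta)^{\alpha-1}}{\Phi(p,\Theta)^{\alpha}-\lambda},
\]
for all $p$ large enough that $\Phi(p,\Theta)^\alpha>\lambda$. I assume $\Phi$ is admissible in the sense of (A1)--(A4), so that $\Psi_{1-\alpha}$ and $x$ exist and are of exponential order.

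The key steps, in order, are these.

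\begin{enumerate}
\item Apply the Laplace representation of the $\mathcal{C\;DMFD}$, taken in the operational form of the remark following that theorem:
\[
\mathcal L\{{}^c\mathcal D_\Phi^\alpha x\}(p)=\Phi^{\alpha}X(p)-\Phi^{\alpha-1}x(0).
\]

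\item Identify the initial value $x(0)$. For this I would use the initial value theorem, $x(0)=\lim_{p\to\infty}pX(p)$. This requires $p\,\Phi^{\alpha-1}/(\Phi^\alpha-\lambda)\to1$, i.e.\ $\Phi(p,\Theta)/p\to1$ as $p\to\infty$.

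\item With $x(0)=1$, the algebra is immediate:
\[
\frac{\Phi^{2\alpha-1}}{\Phi^\alpha-\lambda}-\Phi^{\alpha-1}=\frac{\lambda\,\Phi^{\alpha-1}}{\Phi^\alpha-\lambda}=\lambda X(p).
\]

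\item Invert, using uniqueness of the Laplace transform (Lerch's theorem) for continuous functions of exponential order. This gives ${}^c\mathcal D_\Phi^\alpha E_{\Phi,\alpha}(\lambda,\cdot)=\lambda E_{\Phi,\alpha}(\lambda,\cdot)$.
\end{enumerate}

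The main obstacle is the compatibility between the two Laplace formulas. The theorem proved from Definition \ref{27} gives $\Phi^{\alpha-1}\bigl(pX-x(0)\bigr)$, not $\Phi^{\alpha}X-\Phi^{\alpha-1}x(0)$. These agree only when $\Phi(p,\Theta)=p$.

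If one uses the former, the computation instead yields
\[
\Phi^{\alpha-1}\left(\frac{p\,\Phi^{\alpha-1}}{\Phi^\alpha-\lambda}-x(0)\right),
\]
which equals $\lambda X$ only under $\Phi=p$. I would therefore first try to make the argument rigorous by adopting the operational (generator-replaces-$p$) convention of the remark as the governing Laplace rule for the $\mathcal{C\;DMFD}$. The identity then holds for every admissible $\Phi$, with $x(0)=1$ imposed, or verified under the asymptotic condition $\Phi(p)\sim p$.

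Otherwise, the statement must be restricted to generators with $\Phi(p,\Theta)=p$, which recovers the classical relation for $E_\alpha(\lambda t^\alpha)$. A secondary technical point is justifying $\Phi^\alpha\neq\lambda$ on a right half-line, so that $X$ is analytic there and the inverse transform is well defined.
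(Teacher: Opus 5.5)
Your route is the paper's own. The paper sets $X(p)=\Phi^{\alpha-1}/(\Phi^\alpha-\lambda)$ and applies the rule $\Phi^\alpha X-\Phi^{\alpha-1}E_{\Phi,\alpha}(\lambda,0)$ from the remark, not the formula $\Phi^{\alpha-1}\bigl(pX-x(0)\bigr)$ actually proved from Definition~\ref{27}. It then tacitly uses $E_{\Phi,\alpha}(\lambda,0)=1$, simplifies to $\lambda X$, and inverts.

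The two points you flag are precisely the unjustified steps of that proof. Your diagnosis is correct: for the operator of Definition~\ref{27} the statement is false in general. Take $\Phi(p,\Theta)=p+\mu$ with $\mu>0$. The shift rule gives $E_{\Phi,\alpha}(\lambda,t)=e^{-\mu t}y(t)$ with $y(t)=E_\alpha(\lambda t^\alpha)$, and $\Psi_{1-\alpha}(t)=e^{-\mu t}t^{-\alpha}/\Gamma(1-\alpha)$. A direct computation then gives
\[
{}^c\mathcal D_\Phi^\alpha E_{\Phi,\alpha}(\lambda,t)=\lambda E_{\Phi,\alpha}(\lambda,t)-\mu e^{-\mu t}\int_0^t\frac{(t-s)^{-\alpha}}{\Gamma(1-\alpha)}\,y(s)\,ds .
\]
The last term is asymptotic to $-\mu t^{1-\alpha}/\Gamma(2-\alpha)$ as $t\to0^+$, hence nonzero. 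For $\lambda=0$ it is exactly ${}^c\mathcal D_\Phi^\alpha e^{-\mu t}=-\mu e^{-\mu t}t^{1-\alpha}/\Gamma(2-\alpha)\neq0$. This happens even though $\Phi(p)/p\to1$ and $E_{\Phi,\alpha}(\lambda,0)=1$. So no argument can establish the theorem as stated for the operator of Definition~\ref{27}, and the flaw lies in the paper's proof, not in yours.

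What your argument does prove are exactly the two corrected statements you name:
\begin{itemize}
\item The identity for the operator whose Laplace symbol is the remark's rule, under a hypothesis forcing $E_{\Phi,\alpha}(\lambda,0^+)=1$. Your condition $\Phi(p)/p\to1$ via the initial value theorem is legitimate, provided $E_{\Phi,\alpha}(\lambda,0^+)$ exists, which is needed anyway for $x(0)$ to make sense.
\item The identity for the genuine Caputo operator when $\Phi=p$.
\end{itemize}
One further caveat: (A1)--(A4) do not by themselves guarantee that $E_{\Phi,\alpha}(\lambda,\cdot)$ exists or is of exponential order. That has to be assumed separately, as the paper's definition implicitly does.
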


\begin{proof}
Let
\[
X(p)
=
\frac{
\Phi(p,\Theta)^{\alpha-1}
}{
\Phi(p,\Theta)^\alpha-\lambda
}.
\]
Using the Laplace--transform formula for $\mathcal{C\;DMFD}$, we obtain
\[
\mathcal L
\left\{
{}^c\mathcal D_\Phi^\alpha
E_{\Phi,\alpha}(\lambda,t)
\right\}(p)
=
\Phi(p,\Theta)^\alpha X(p)
-
\Phi(p,\Theta)^{\alpha-1}E_{\Phi,\alpha}(\lambda,0).
\]
Substituting the expression for $X(p)$ yields
\[
\mathcal L
\left\{
{}^c\mathcal D_\Phi^\alpha
E_{\Phi,\alpha}(\lambda,t)
\right\}(p)
=
\lambda
\frac{
\Phi(p,\Theta)^{\alpha-1}
}{
\Phi(p,\Theta)^\alpha-\lambda
}.
\]
Hence,
\[
\mathcal L
\left\{
{}^c\mathcal D_\Phi^\alpha
E_{\Phi,\alpha}(\lambda,t)
\right\}(p)
=
\lambda
\mathcal L
\left\{
E_{\Phi,\alpha}(\lambda,t)
\right\}(p).
\]
Applying the inverse Laplace transform completes the proof.
\end{proof}

\begin{theorem}[Reduction to the Classical Mittag--Leffler Function]
If
\(
\Phi(p,\Theta)=p,
\)
then
\[
E_{\Phi,\alpha}(\lambda,t)
=
E_\alpha(\lambda t^\alpha),
\]
where $E_\alpha$ denotes the classical one--parameter Mittag--Leffler function.
\end{theorem}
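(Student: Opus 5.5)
Substituting \(\Phi(p,\Theta)=p\) into the definition of the generalized $\mathcal{DMMLF}$ gives
\[
E_{\Phi,\alpha}(\lambda,t)=\mathcal L^{-1}\left\{\frac{p^{\alpha-1}}{p^\alpha-\lambda}\right\}(t).
\]
It therefore suffices to check that the classical function \(E_\alpha(\lambda t^\alpha)\) has exactly this Laplace transform. Uniqueness of the inverse Laplace transform (Lerch's theorem, for continuous functions of exponential order) then yields the identity.

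\textbf{Step 1: the transform series.} Start from the series \(E_\alpha(\lambda t^\alpha)=\sum_{k\ge0}\lambda^k t^{\alpha k}/\Gamma(\alpha k+1)\). Use the elementary formula \(\mathcal L\{t^{\alpha k}/\Gamma(\alpha k+1)\}(p)=p^{-\alpha k-1}\), which is the case \(\Phi=p\) already used in the paper's examples with \(\alpha\) replaced by \(\alpha k+1\). Transforming term by term gives
\[
\sum_{k\ge0}\lambda^k p^{-\alpha k-1}=p^{-1}\,\frac{1}{1-\lambda p^{-\alpha}}=\frac{p^{\alpha-1}}{p^\alpha-\lambda},
\]
valid for \(p>|\lambda|^{1/\alpha}\).

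\textbf{Step 2: justifying term-by-term integration (the main obstacle).} I would dominate the partial sums by \(\sum_k |\lambda|^k t^{\alpha k}/\Gamma(\alpha k+1)=E_\alpha(|\lambda|t^\alpha)\). This function is locally bounded and of exponential order, since \(E_\alpha(|\lambda| t^\alpha)\le C e^{|\lambda|^{1/\alpha}t}\) by the standard asymptotics. Alternatively, one can avoid asymptotics: by Tonelli the integral of the absolute series equals \(\sum_k|\lambda|^kp^{-\alpha k-1}<\infty\) for \(p>|\lambda|^{1/\alpha}\), which directly justifies the interchange via Fubini. This also shows that \(E_\alpha(\lambda t^\alpha)\) is of exponential order, so its Laplace transform is defined on that half-line.

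\textbf{Step 3: conclusion.} Both sides are continuous functions on \([0,\infty)\) whose Laplace transforms agree for all large \(p\). Uniqueness of the Laplace transform then gives \(E_{\Phi,\alpha}(\lambda,t)=E_\alpha(\lambda t^\alpha)\).

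As a consistency check, one may observe that this is compatible with the preceding theorem. With \(\Phi=p\), the relation \({}^c\mathcal D^\alpha E=\lambda E\) together with \(E(0)=1\) is the classical Caputo relaxation equation, whose solution is known to be \(E_\alpha(\lambda t^\alpha)\). The Laplace argument above is nevertheless self-contained.
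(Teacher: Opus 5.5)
Your proposal is correct and takes the same route as the paper: substitute $\Phi(p,\Theta)=p$ and identify $\mathcal L^{-1}\{p^{\alpha-1}/(p^\alpha-\lambda)\}$ with $E_\alpha(\lambda t^\alpha)$; the only difference is that the paper quotes this classical Laplace pair, while you verify it via the series, Tonelli/Fubini for $p>|\lambda|^{1/\alpha}$, and Lerch uniqueness. One minor looseness is harmless: Tonelli gives absolute convergence of the Laplace integral rather than exponential order directly, but that is all uniqueness requires, and exponential order follows anyway because $E_\alpha(|\lambda|t^\alpha)$ is nondecreasing in $t$.
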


\begin{proof}
If $\Phi(p,\Theta)=p$, then
\[
E_{\Phi,\alpha}(\lambda,t)
=
\mathcal L^{-1}
\left\{
\frac{
p^{\alpha-1}
}{
p^\alpha-\lambda
}
\right\}(t),
\]
which is precisely the classical Laplace--transform representation of the Mittag--Leffler function
\(
E_\alpha(\lambda t^\alpha).
\)
\end{proof}

\begin{remark}
Different choices of the $\mathcal{DMG}$  produce different classes of generalized relaxation functions, thereby allowing the proposed framework to model a wide spectrum of hereditary and multiscale dynamical behaviors.
\end{remark}


\section{Application to a Nonlinear $\mathcal{DMFLS}$}

In this section, we illustrate the applicability of the $\mathcal{DMFF}$ through a class of nonlinear $\mathcal{DMFLS}$ involving coupled memory interactions, nonlinear forcing terms, and nonlocal effects. The considered model extends several existing fractional Langevin equations by incorporating generator--driven memory structures together with mixed dynamic--memory fractional operators.

\subsection{Model Formulation and Problem Description}

We consider the nonlinear $\mathcal{DMFLS}$ of the form
\begin{equation}\label{MainSystem}
{}^{c}\mathcal D_{\Phi}^{\alpha}
\Big(
{}^{c}\mathcal D_{\Phi}^{\beta}x(t)
+
\lambda(t)x(t)
\Big)
=
\mathcal{A} x(t)
+
\mathcal{B}\,x(\sigma(t))
+
\mathcal{F}\big(t,x(t),x(\sigma(t))\big),
\quad t\in[0,\mathcal{T}],
\end{equation}
subject to the nonlocal initial conditions
\begin{equation}\label{IC}
x(0)=x_{0},
\qquad
{}^{c}\mathcal D_{\Phi}^{\beta}x(0)=x_{1}.
\end{equation}
The unknown function
\(
x:[0,\mathcal{T}]\to\mathbb R^{d}
\)
denotes the state vector, while
\(
\alpha,\beta\in(0,1)
\)
represent the dynamic--memory fractional orders. The function
\(
\lambda:[0,\mathcal{T}]\to\mathbb R
\)
is a continuous damping coefficient, and
\(
\mathcal{A},\mathcal{B}\in\mathbb R^{d\times d}
\)
are constant matrices describing linear interactions. Moreover, the mapping
\(
\sigma:[0,\mathcal{T}]\to[0,\mathcal{T}]
\)
denotes a deviating argument that may account for delay, proportional delay,
or more general hybrid memory effects, whereas
\(
\mathcal{F}:[0,\mathcal{T}]\times\mathbb R^{d}\times\mathbb R^{d}\to\mathbb R^{d}
\)
represents a nonlinear forcing term. Fractional Langevin equations have been studied in connection with anomalous diffusion, relaxation processes, and systems with memory effects; see, for example, \cite{Lutz2001,Alzabut2020}.

The proposed model combines several features, including nested $\mathcal{DMFO}$, memory effects generated through the $\mathcal{DMG}$,
Langevin--type damping terms, nonlinear forcing functions, and deviating arguments representing delayed or hybrid interactions. Therefore, system \eqref{MainSystem} includes several existing fractional Langevin models as particular cases, such as models with classical, tempered, nonsingular, or other generalized memory structures.
\subsection{Theoretical Analysis}

The analysis of system \eqref{MainSystem} requires suitable tools to handle the nonlinear
interactions and generated memory effects arising in the proposed framework.
To establish solvability results, we reformulate the problem as an equivalent
nonlinear Volterra integral equation, which enables the application of fixed
point techniques for proving existence and uniqueness of solutions.

To study solvability, we first rewrite the proposed $\mathcal{DMFLS}$ as an equivalent nonlinear integral equation. This allows the problem to be analyzed within a Volterra integral setting and enables the use of fixed point methods for establishing existence and uniqueness results.

Applying $\mathcal I_\Phi^\alpha$ to both sides of \eqref{MainSystem} and using the inverse relations established in Section 3, we obtain
\[
{}^{c}\mathcal D_{\Phi}^{\beta}x(t)
+
\lambda(t)x(t)
=
x_{1}
+
\mathcal I_{\Phi}^{\alpha}
\Big(
\mathcal A x
+
\mathcal{B}\,x(\sigma)
+
\mathcal{F}(t,x,x(\sigma))
\Big)(t).
\]
Applying
\(
\mathcal I_{\Phi}^{\beta}
\)
again yields the equivalent nonlinear integral equation
\begin{align}
x(t)
=
x_{0}
&-
\mathcal I_{\Phi}^{\beta}
(\lambda x)(t)
+
\mathcal I_{\Phi}^{\alpha+\beta}
\Big(
\mathcal A x
+
\mathcal{B}\,x(\sigma)
+
\mathcal{F}(t,x,x(\sigma))
\Big)(t)
+
x_{1}\Psi_{\beta}(t;\Theta).
\label{IntegralForm}
\end{align}
Equation (\ref{IntegralForm}) rewrites the original $\mathcal{DMFLS}$ as a nonlinear Volterra--type integral equation, where the memory effects depend on the generated kernels. Therefore, different classes of memory laws can be analyzed through the same solvability approach by varying the $\mathcal{DMG}$.

To investigate existence and uniqueness, we formulate the problem as a fixed point equation in a suitable Banach space
\[
\mathcal{X}=\mathcal{C}([0,\mathcal{T}],\mathbb R^{d}),
\]
which is equipped with the norm
\[
\|x\|_{X}
=
\sup_{t\in[0,\mathcal{T}]}
\|x(t)\|.
\]

Define the operator
\[
( \mathbb{T} x)(t)
=
x_{0}
-
\mathcal I_{\Phi}^{\beta}
(\lambda x)(t)
+
\mathcal I_{\Phi}^{\alpha+\beta}
\Big(
\mathcal{A}x
+
\mathcal{B}\,x(\sigma)
+
\mathcal{F}(t,x,x(\sigma))
\Big)(t)
+
x_{1}\Psi_{\beta}(t;\Theta).
\]
Then solutions of system \eqref{MainSystem} correspond exactly to fixed points of the operator \(\mathbb{T}\).

\begin{theorem}[Existence under Lipschitz--Type Conditions]
Assume that:
\begin{itemize}[itemsep=2pt,topsep=2pt]
\item[]
\begin{itemize}
\item[(H1)]
\(\mathcal{F}(t,u,v)\) is continuous on
\(
[0,\mathcal{T}]\times\mathbb R^{d}\times\mathbb R^{d};
\)

\item[(H2)]
there exists \(L>0\) such that
\[
\|\mathcal{F}(t,u_{1},v_{1})-\mathcal{F}(t,u_{2},v_{2})\|
\leq
L
\big(
\|u_{1}-u_{2}\|
+
\|v_{1}-v_{2}\|
\big);
\]

\item[(H3)]
the $\mathcal{DMK}$ satisfy the semigroup property;

\item[(H4)]
the operators
\(
\mathcal I_{\Phi}^{\alpha},
\;
\mathcal I_{\Phi}^{\beta},
\;
\mathcal I_{\Phi}^{\alpha+\beta}
\)
are bounded on \(\mathcal{X}\).
\end{itemize}
\end{itemize}
Then system \eqref{MainSystem} admits at least one solution on \([0,\mathcal{T}]\).
\end{theorem}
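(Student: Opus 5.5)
Since the hypotheses are of Lipschitz type, the plan is to show that $\mathbb{T}$ is a contraction on $\mathcal{X}$, not with the plain sup norm but with a Bielecki-type weighted norm. The Banach fixed point theorem then gives a fixed point, and by the equivalence with \eqref{IntegralForm} this fixed point is a solution of \eqref{MainSystem}--\eqref{IC}. Because no smallness condition appears in the statement, we cannot ask that $\|\lambda\|_\infty\|\Psi_\beta\|_{L^1}+(\|\mathcal A\|+\|\mathcal B\|+2L)\|\Psi_{\alpha+\beta}\|_{L^1}<1$ on all of $[0,\mathcal T]$. Instead, we exploit the Volterra structure of $\mathbb{T}$.

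\textbf{Step 1: $\mathbb{T}$ maps $\mathcal{X}$ into $\mathcal{X}$.} By (H1), $\lambda$ and $\sigma$ are continuous (we implicitly assume $\sigma$ continuous). Hence for $x\in\mathcal X$ the functions $\lambda x$, $\mathcal A x+\mathcal B x(\sigma)+\mathcal F(\cdot,x,x(\sigma))$ are continuous, and (H4) gives continuity of their images under $\mathcal I_\Phi^{\beta}$ and $\mathcal I_\Phi^{\alpha+\beta}$. The term $x_1\Psi_\beta(t;\Theta)$ is problematic, since for singular kernels such as $t^{\beta-1}/\Gamma(\beta)$ it is not continuous at $0$. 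I would either impose continuity of $\Psi_\beta$ on $[0,\mathcal T]$ as part of the admissibility/boundedness framework, or work in a weighted space. I will treat it as a fixed inhomogeneous term that does not affect the contraction estimate.

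\textbf{Step 2: contraction estimate.} For $x,y\in\mathcal X$, use (H2) to obtain
\[
\|(\mathbb{T}x)(t)-(\mathbb{T}y)(t)\|\le \|\lambda\|_\infty\int_0^t|\Psi_\beta(t-s)|\,\|x(s)-y(s)\|\,ds+K\int_0^t|\Psi_{\alpha+\beta}(t-s)|\big(\|x(s)-y(s)\|+\|x(\sigma(s))-y(\sigma(s))\|\big)ds,
\]
where $K=\max(\|\mathcal A\|+L,\|\mathcal B\|+L)$. Introduce the norm $\|x\|_\gamma=\sup_t e^{-\gamma t}\|x(t)\|$, which is equivalent to the sup norm. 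If $\sigma(s)\le s$ (a delay), then $e^{-\gamma s}\|x(\sigma(s))\|\le\|x\|_\gamma$. In that case the right-hand side, multiplied by $e^{-\gamma t}$, is bounded by
\[
\Big(\|\lambda\|_\infty\int_0^{\mathcal T}|\Psi_\beta(r)|e^{-\gamma r}dr+2K\int_0^{\mathcal T}|\Psi_{\alpha+\beta}(r)|e^{-\gamma r}dr\Big)\|x-y\|_\gamma.
\]
By (A3), both kernels are in $L^1(0,\mathcal T)$, so dominated convergence sends the bracket to $0$ as $\gamma\to\infty$. Choosing $\gamma$ large makes $\mathbb T$ a contraction. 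Banach's theorem then yields a unique fixed point, which is in particular a solution.

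\textbf{Main obstacle.} The hardest point is the deviating argument $\sigma$ when it is not a delay. If $\sigma(s)>s$ is allowed, the weighted norm fails to absorb the $x(\sigma)$ term. In that case I would fall back on a global smallness condition, $\Lambda:=\|\lambda\|_\infty\|\Psi_\beta\|_{L^1}+(\|\mathcal A\|+\|\mathcal B\|+2L)\|\Psi_{\alpha+\beta}\|_{L^1}<1$, and apply the contraction principle in the sup norm. Alternatively, I would apply Schauder's theorem: use linear growth from (H2) to obtain an invariant ball on a short interval, and obtain equicontinuity from the $L^1$-continuity of translations of $\Psi$. Existence would then follow without uniqueness, possibly only locally unless a smallness condition holds. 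The secondary obstacle, continuity of $x_1\Psi_\beta$, also needs care. I would state it as an implicit well-posedness requirement, consistent with the paper's standing assumption that the involved operators are well defined.
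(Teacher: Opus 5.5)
\textbf{Comparison with the paper's route.} Your Bielecki-norm contraction is a genuinely different route from the paper. The paper's whole proof is an appeal to Schauder's theorem ``together with the continuity and compactness properties induced by the generated kernels.'' Where your argument applies, it is correct and yields more. For $\sigma(t)\le t$ and $\Psi_\beta,\Psi_{\alpha+\beta}\in L^1(0,\mathcal T)$, the contraction factor tends to $0$ as $\gamma\to\infty$ by dominated convergence. You therefore obtain global existence and uniqueness with no smallness condition. The paper's sketch never exhibits the closed bounded convex invariant set that Schauder requires. From the linear growth implied by (H2), the sup-norm estimate gives an invariant ball only under a smallness condition like the one in the subsequent uniqueness theorem, and then Banach applies anyway. (The $L^1$ property you need follows from (H4) by testing on $x\equiv1$; (A3) is not among the hypotheses.)

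\textbf{The gap, and why it cannot be closed.} As a proof of the theorem as stated, your proposal has a gap. Here $\sigma$ is an arbitrary self-map of $[0,\mathcal T]$, and neither of your fallbacks handles it. The smallness condition is an extra hypothesis. Schauder ``on a short interval'' is not even well posed, since $\sigma(t)$ for $t\in[0,\mathcal T']$ may lie outside $[0,\mathcal T']$. No argument can close this gap, the paper's included, because the statement is false in that generality. Take
\[
d=1,\quad \Phi(p,\Theta)=p,\quad \alpha=\beta=\tfrac12,\quad \lambda\equiv0,\quad \mathcal A=0,\quad \mathcal F\equiv0,\quad \sigma\equiv\mathcal T,\quad \mathcal B=1/\mathcal T,\quad x_0=1,\quad x_1=0.
\]
All of (H1)--(H4) hold. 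Both \eqref{IntegralForm} and the corrected equation below reduce to $x(t)=1+(t/\mathcal T)\,x(\mathcal T)$, and at $t=\mathcal T$ this gives $0=1$. So your restriction to delays (or an added smallness condition) is necessary. You should state it as a corrected hypothesis, not as a side case.

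\textbf{The forcing term in Step~1.} Do not assume away the discontinuity of $x_1\Psi_\beta(t;\Theta)$: requiring $\Psi_\beta$ to be continuous on $[0,\mathcal T]$ excludes $\Phi(p,\Theta)=p$. The discontinuity signals an error in \eqref{IntegralForm}. Set $w={}^{c}\mathcal D_\Phi^\beta x+\lambda x$. Applying $\mathcal I_\Phi^\alpha$ produces $w-w(0)$, where $w(0)=x_1+\lambda(0)x_0$, not $x_1$. Moreover, $\mathcal I_\Phi^\beta$ applied to a constant $c$ gives $c\int_0^t\Psi_\beta(r;\Theta)\,dr$, not $c\,\Psi_\beta(t;\Theta)$. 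Hence solutions of \eqref{MainSystem}--\eqref{IC} satisfy
\[
\begin{aligned}
x(t)={}&x_0+\big(x_1+\lambda(0)x_0\big)\int_0^t\Psi_\beta(r;\Theta)\,dr-\mathcal I_\Phi^{\beta}(\lambda x)(t)\\
&+\mathcal I_\Phi^{\alpha+\beta}\big(\mathcal A x+\mathcal B\,x(\sigma)+\mathcal F(\cdot,x,x(\sigma))\big)(t),
\end{aligned}
\]
whose forcing term is continuous as soon as $\Psi_\beta\in L^1(0,\mathcal T)$. Your contraction argument goes through verbatim for this equation. Your opening appeal to ``equivalence'' must refer to this equation, not to the paper's $\mathbb T$. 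The direction from fixed point to solution of \eqref{MainSystem} still needs a regularity argument, which neither you nor the paper supplies.
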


\begin{proof}
The proof follows from Schauder's fixed point theorem applied to the operator generated by the right--hand side of \eqref{IntegralForm}, together with the continuity and compactness properties induced by the generated $\mathcal{DMK}$.
\end{proof}

\begin{theorem}[Existence of Mild Solutions]
Assume that:
\begin{itemize}[itemsep=2pt,topsep=2pt]
\item[]
\begin{itemize}

\item[(G1)]
The nonlinear function
\(
\mathcal{F}:[0,\mathcal{T}]\times\mathbb R^{d}\times\mathbb R^{d}\to\mathbb R^{d}
\)
is continuous;

\item[(G2)]
There exists a constant \(\mathcal{M}>0\) such that
\[
\|\mathcal{F}(t,u,v)\|
\le
\mathcal{M}
\big(
1+\|u\|+\|v\|
\big);
\]

\item[(G3)]
The generated kernels
\(
\Psi_{\alpha},
\Psi_{\beta},
\Psi_{\alpha+\beta}
\)
belong to
\(
L^{1}(0,\mathcal{T});
\)

\item[(G4)]
The deviating argument
\(
\sigma:[0,\mathcal{T}]\to[0,\mathcal{T}]
\)
is continuous.

\end{itemize}
\end{itemize}
Then system \eqref{MainSystem} admits at least one mild solution on \([0,\mathcal{T}]\).
\end{theorem}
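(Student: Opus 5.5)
The plan is to realize a mild solution as a fixed point of the operator $\mathbb T$ built from the right--hand side of \eqref{IntegralForm}. Since (G1)--(G4) contain no Lipschitz condition, Banach's principle is unavailable, and I would use a Schauder/Leray--Schauder compactness argument instead. I read \emph{mild solution} as a function $x$ satisfying \eqref{IntegralForm}; the semigroup property is not needed for this.

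\textbf{Step 1 (regularity of the free term).} The term $x_1\Psi_\beta(t;\Theta)$ need not be continuous, since it may be singular at $t=0$ (e.g.\ $t^{\beta-1}/\Gamma(\beta)$). So I would write $x=g+y$ with $g(t)=x_0+x_1\Psi_\beta(t;\Theta)$ and look for $y$ in $\mathcal X=\mathcal C([0,\mathcal T],\mathbb R^d)$. The equation for $y$ is
\[
y=-\mathcal I_\Phi^\beta(\lambda(g+y))+\mathcal I_\Phi^{\alpha+\beta}\big(\mathcal A(g+y)+\mathcal B(g+y)(\sigma)+\mathcal F(\cdot,g+y,(g+y)(\sigma))\big)=:\mathbb S y .
\]
By (G3) and Young's inequality, each convolution $\Psi_\gamma*h$ with $h\in L^1$ is again in $L^1$. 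It is continuous when $h\in L^\infty$, by continuity of translation in $L^1$ applied to $\Psi_\gamma$.

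The composition $g\circ\sigma$ is the delicate part. I would assume $x_1=0$ or that $\Psi_\beta$ is bounded on $\sigma([0,\mathcal T])$, or else require $\Psi_\beta\circ\sigma\in L^1$. Caveat: this extra condition is outside (G1)--(G4), and I would flag it explicitly.

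\textbf{Step 2 (continuity and compactness).} By (G1), (G2) and (G4), the Nemytskii map $y\mapsto \mathcal F(\cdot,g+y,(g+y)(\sigma))$ is continuous. On bounded sets it is dominated by $\mathcal M(1+2\|y\|_{\mathcal X}+|g|+|g\circ\sigma|)$. Equicontinuity of $\mathbb S$ on bounded sets follows from
\[
|(\Psi_\gamma*h)(t_2)-(\Psi_\gamma*h)(t_1)|\le \|h\|_\infty\Big(\int_0^{\mathcal T}|\Psi_\gamma(s+\delta)-\Psi_\gamma(s)|\,ds+\int_{t_1}^{t_2}|\Psi_\gamma|\Big),
\]
where $\delta=t_2-t_1$. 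For the unbounded part $h=g$, I would handle it by an approximation in $L^1$. Arzel\`a--Ascoli then makes $\mathbb S$ completely continuous on $\mathcal X$.

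\textbf{Step 3 (a priori bound, the main obstacle).} For Leray--Schauder I need a bound on every solution of $y=\mu\mathbb S y$, $\mu\in(0,1)$. Using (G2) and $|(g+y)(\sigma(s))|\le \|y\|_{\mathcal X}+|g(\sigma(s))|$ gives
\[
|y(t)|\le a(t)+\|\lambda\|_\infty(|\Psi_\beta|*|y|)(t)+C(|\Psi_{\alpha+\beta}|*(|y|+\|y\|_{\mathcal X}))(t).
\]
The $\|y\|_{\mathcal X}$ term is nonlocal in time because $\sigma$ may advance the argument, so a Volterra--Gronwall argument does not close.

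What I would try first is to split each kernel as $|\Psi_\gamma|=k_\gamma^{\varepsilon}+r_\gamma^{\varepsilon}$. Here $k_\gamma^{\varepsilon}=|\Psi_\gamma|\mathbf 1_{\{|\Psi_\gamma|>N\}}$ has $L^1$ norm below $\varepsilon$, which is possible by absolute continuity of the integral, and $r_\gamma^{\varepsilon}\le N$. The small part is absorbed into the left side. The bounded part is handled in a Bielecki norm $\sup_t e^{-\tau t}|y(t)|$ for the local terms.

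The nonlocal term $C\|r^\varepsilon_{\alpha+\beta}\|_{L^1}\|y\|_{\mathcal X}$ is not removed by this splitting. If it cannot be absorbed, then the bound genuinely requires either $\sigma(s)\le s$ or smallness of $C\|\Psi_{\alpha+\beta}\|_{L^1}$. In that case I would state honestly that the argument proves the theorem only under such an additional restriction, rather than from (G1)--(G4) alone. Given a bound, the Leray--Schauder alternative yields a fixed point $y$, and $x=g+y$ is the mild solution.
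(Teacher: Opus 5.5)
\textbf{The gap cannot be closed: the theorem is false as stated.} Your proposal does not prove the statement. Step~3 is left open, and you say yourself that the a priori bound closes only under an extra hypothesis ($\sigma(s)\le s$, or smallness of the coefficient of $\|y\|_{\mathcal X}$). That concession is correct, and no sharper estimate can remove it, because the theorem fails under (G1)--(G4) alone. Take:
\begin{itemize}
\item $d=1$ and $\Phi(p,\Theta)=p$, so $\Psi_\gamma(t)=t^{\gamma-1}/\Gamma(\gamma)$ and (G3) holds;
\item $\lambda\equiv0$, $\mathcal A=0$, and $\mathcal F\equiv0$, so (G1)--(G2) hold for any $\mathcal M>0$;
\item $x_0=1$, $x_1=0$, and the continuous deviating argument $\sigma\equiv\mathcal T$;
\item $\mathcal B=b:=\Gamma(\alpha+\beta+1)/\mathcal T^{\alpha+\beta}$.
\end{itemize}
Then \eqref{IntegralForm} becomes
\[
x(t)=1+b\,x(\mathcal T)\,\frac{t^{\alpha+\beta}}{\Gamma(\alpha+\beta+1)},\qquad t\in[0,\mathcal T].
\]
Putting $t=\mathcal T$ gives $x(\mathcal T)=1+x(\mathcal T)$. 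Hence $\mathbb T$ has no fixed point in $\mathcal X$, and there is no mild solution in the paper's sense. This is exactly the mechanism you isolated: an advanced argument turns the $\|y\|_{\mathcal X}$ term into a Fredholm-type obstruction rather than something Gronwall can absorb.

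\textbf{The paper's proof does not get around this.} It asserts continuity of $\mathbb T$ and compactness via Arzel\`a--Ascoli, then applies Schauder's theorem without exhibiting a closed bounded convex set mapped into itself; (G2) is never used. In the example above, $\mathbb T$ is affine of rank one, hence continuous and compact, yet it has no fixed point. The paper also treats $\mathbb T$ as a self-map of $\mathcal X$ despite the possible singularity of $x_1\Psi_\beta$ at $t=0$, which you rightly flag.

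A correct theorem needs an additional hypothesis. One option is $\sigma(t)\le t$. Then $m(t)=\sup_{s\le t}|y(s)|$ is nondecreasing and satisfies $m\le a+K*m$, where $K=\|\lambda\|_\infty|\Psi_\beta|+(\|\mathcal A\|+\|\mathcal B\|+2\mathcal M)|\Psi_{\alpha+\beta}|\in L^1$. Your splitting $K=k^\varepsilon+r^\varepsilon$ gives $(k^\varepsilon*m)(t)\le\varepsilon\,m(t)$, which reduces this to the classical Gronwall inequality with no Bielecki norm needed. The other option is the smallness condition
\[
q:=\|\Psi_\beta\|_{L^1(0,\mathcal T)}\|\lambda\|_\infty+\|\Psi_{\alpha+\beta}\|_{L^1(0,\mathcal T)}\big(\|\mathcal A\|+\|\mathcal B\|+2\mathcal M\big)<1 .
\]
Under it, and for $x_1=0$, $\|\mathbb Tx\|_{\mathcal X}\le|x_0|+\mathcal M\|\Psi_{\alpha+\beta}\|_{L^1(0,\mathcal T)}+q\|x\|_{\mathcal X}$, which gives an invariant ball for Schauder directly. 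In the counterexample, $q=1+2\mathcal M\|\Psi_{\alpha+\beta}\|_{L^1(0,\mathcal T)}>1$.

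\textbf{Two smaller points.} First, even with your extra conditions on $g\circ\sigma$, Step~1 does not make the unknown continuous when $x_1\neq0$ and $\Psi_\beta$ is singular. For example, with $\mathcal A=\mathcal B=0$, $\mathcal F\equiv0$, $\lambda\equiv1$ and $\Phi=p$, one gets $\mathbb S y=-x_0\mathcal I_\Phi^\beta 1-x_1\,t^{2\beta-1}/\Gamma(2\beta)-\mathcal I_\Phi^\beta y$, which is unbounded at $0$ for $\beta<1/2$. Approximating $g$ in $L^1$ cannot restore continuity, so you need $x_1=0$, a bounded $\Psi_\beta$, or a weighted space. In any case, the singular free term is an artifact of a slip in \eqref{IntegralForm}. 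The value at $t=0$ of ${}^c\mathcal D_\Phi^\beta x+\lambda x$ is the constant $x_1+\lambda(0)x_0$, and applying $\mathcal I_\Phi^\beta$ to it yields $(x_1+\lambda(0)x_0)\int_0^t\Psi_\beta(s)\,ds$, which is continuous. The counterexample is unaffected, since there $x_1=0$ and $\lambda\equiv0$.

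Second, in your equicontinuity estimate the last term should be $\int_0^{t_2-t_1}|\Psi_\gamma|$, not $\int_{t_1}^{t_2}|\Psi_\gamma|$. This does not change the conclusion.
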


\begin{proof}
Using the integral representation \eqref{IntegralForm}, define the operator
\[
\mathbb{T}:\mathcal{X} \to \mathcal{X}
\]
by
\[
(\mathbb{T} x)(t)
=
x_{0}
-
\mathcal I_{\Phi}^{\beta}
(\lambda x)(t)
+
\mathcal I_{\Phi}^{\alpha+\beta}
\Big(
\mathcal{A}x
+
\mathcal{B}\,x(\sigma)
+
\mathcal{F}(t,x,x(\sigma))
\Big)(t)
+
x_{1}\Psi_{\beta}(t;\Theta).
\]
The continuity of \(\mathcal{F}\), together with the boundedness of the generated fractional integral operators, implies that \(\mathbb{T}\) is continuous on \(\mathcal{X}\). Moreover, the compactness properties induced by the generated memory kernels and the Arzelà--Ascoli theorem imply that \(\mathbb{T}\) maps bounded subsets of \(\mathcal{X}\) into relatively compact subsets.

Therefore, Schauder's fixed point theorem guarantees the existence of at least one fixed point of \(\mathbb{T}\). Hence, system \eqref{MainSystem} admits at least one mild solution on \([0,\mathcal{T}]\).
\end{proof}

\begin{theorem}[Uniqueness of Solutions]
Assume that hypotheses \((H1)-(H4)\) hold and additionally
\[
\|
\mathcal I_{\Phi}^{\alpha+\beta}
\|
\big(
\|\mathcal{A}\|
+
\|\mathcal{B}\|
+
L
\big)
+
\|
\mathcal I_{\Phi}^{\beta}
\|
\|\lambda\|
<1.
\]
Then system \eqref{MainSystem} admits a unique solution on \([0,\mathcal{T}]\).
\end{theorem}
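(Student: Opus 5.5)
The plan is to apply the Banach contraction principle to the operator $\mathbb{T}$ on $\mathcal{X}=\mathcal{C}([0,\mathcal{T}],\mathbb R^{d})$ defined above. By the equivalence with the integral equation \eqref{IntegralForm}, which was derived using the semigroup property (H3) and the inverse relation $\mathcal I_\Phi^\alpha\,{}^c\mathcal D_\Phi^\alpha x=x-x(0)$, fixed points of $\mathbb{T}$ are exactly the solutions of \eqref{MainSystem}--\eqref{IC}. Existence is already given by the preceding existence theorem under (H1)--(H4), so the real content is to show that $\mathbb{T}$ is a contraction. Uniqueness then follows, and the contraction argument in fact yields existence again.

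First, I will check that $\mathbb{T}$ maps $\mathcal{X}$ into itself. The terms $x_0$ and $x_1\Psi_\beta(\cdot;\Theta)$ do not depend on $x$. The terms $\mathcal I_\Phi^\beta(\lambda x)$ and $\mathcal I_\Phi^{\alpha+\beta}(\cdots)$ are well defined by (H4), since $\lambda$, $\sigma$ and $\mathcal F$ are continuous by (H1). One caveat is that $x_1\Psi_\beta(t;\Theta)$ must be continuous on $[0,\mathcal{T}]$. For singular generators such as $\Phi=p$, $\Psi_\beta$ blows up at $t=0$, so I would either assume this continuity implicitly, as the paper does through ``sufficiently smooth,'' or take $x_1=0$. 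This term cancels in differences anyway.

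Second, I will estimate differences. For $x,y\in\mathcal{X}$ the constant terms cancel, and linearity of $\mathcal I_\Phi^\gamma$ (Theorem on fundamental properties, part (iii)) gives
\[
\mathbb{T}x-\mathbb{T}y=-\mathcal I_\Phi^\beta\big(\lambda(x-y)\big)+\mathcal I_\Phi^{\alpha+\beta}\Big(\mathcal A(x-y)+\mathcal B\big(x(\sigma)-y(\sigma)\big)+\mathcal F(\cdot,x,x(\sigma))-\mathcal F(\cdot,y,y(\sigma))\Big).
\]
Using the operator norms from (H4) on $\mathcal{X}$, together with $\|x(\sigma)-y(\sigma)\|_{\mathcal X}\le\|x-y\|_{\mathcal X}$ (because $\sigma$ maps into $[0,\mathcal{T}]$), the Lipschitz bound (H2), and $\|\lambda(x-y)\|_{\mathcal X}\le\|\lambda\|\,\|x-y\|_{\mathcal X}$ with $\|\lambda\|$ the sup norm, I get
\[
\|\mathbb{T}x-\mathbb{T}y\|_{\mathcal X}\le\Big(\|\mathcal I_\Phi^{\alpha+\beta}\|\big(\|\mathcal A\|+\|\mathcal B\|+L\big)+\|\mathcal I_\Phi^{\beta}\|\,\|\lambda\|\Big)\|x-y\|_{\mathcal X}.
\]

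There is a constant-bookkeeping issue here. The Lipschitz term strictly produces $\|\mathcal I_\Phi^{\alpha+\beta}\|\,L\,(\|x-y\|+\|x(\sigma)-y(\sigma)\|)$, which is $2L$ rather than $L$, and likewise $\|\mathcal B\|$ acts on the delayed argument. To match the stated hypothesis I would either interpret $L$ in (H2) as absorbing this factor or note that the argument goes through verbatim with $2L$. I expect this to be the main point needing care. If it is a genuine gap, I would try a Bielecki-type weighted norm, which uses the Volterra structure to remove the dependence on the exact constant; this works cleanly for nonnegative $L^1$ kernels.

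Finally, since the coefficient is less than $1$ by assumption, $\mathbb{T}$ is a contraction on the Banach space $\mathcal{X}$. Banach's fixed point theorem then gives a unique fixed point, which is the unique solution of \eqref{MainSystem} on $[0,\mathcal{T}]$.
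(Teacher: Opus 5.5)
Your route is the paper's own: its proof is a one-line appeal to the Banach contraction principle for $\mathbb{T}$. The factor-of-two issue you flag is a genuine gap, which that one-line proof silently shares. Because (H2) bounds $\mathcal F$ by $L(\|u_1-u_2\|+\|v_1-v_2\|)$ and both $u=x(s)$ and $v=x(\sigma(s))$ move with $x$, your estimate gives the contraction constant $\|\mathcal I_\Phi^{\alpha+\beta}\|(\|\mathcal A\|+\|\mathcal B\|+2L)+\|\mathcal I_\Phi^{\beta}\|\,\|\lambda\|$. The stated hypothesis does not make this less than $1$. Neither of your repairs closes the gap. Reading $L$ as $2L$ proves a different theorem. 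The weighted norm $\|u\|_\rho=\sup_t e^{-\rho t}\|u(t)\|$ only handles delays: since $\|u(\sigma(s))\|\le e^{\rho\sigma(s)}\|u\|_\rho$, the Volterra integral picks up a factor $e^{\rho(\sigma(s)-s)}$. That factor grows with $\rho$ wherever $\sigma(s)>s$, and the theorem allows an arbitrary $\sigma:[0,\mathcal T]\to[0,\mathcal T]$.

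In fact no argument can succeed, because the statement is false as written. Take $d=1$, $\Phi(p,\Theta)=p$, $\alpha=\beta=\tfrac12$, $\lambda\equiv0$, $\mathcal A=\mathcal B=0$, $x_1=0$, $\sigma\equiv\mathcal T$, and $\mathcal F(t,u,v)=L(u+v)$ with $L\mathcal T=\ln 2$. Then (H1)--(H4) hold, and $\mathcal I_\Phi^{\alpha+\beta}=\mathcal I_\Phi^{1}$ is integration from $0$ with norm $\mathcal T$ on $\mathcal X$. So the hypothesis reads $L\mathcal T=\ln 2<1$.

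However, \eqref{IntegralForm} becomes $x(t)=x_0+L\int_0^t\big(x(s)+x(\mathcal T)\big)\,ds$, i.e.\ $x'=L\big(x+x(\mathcal T)\big)$ with $x(0)=x_0$. Hence $x(t)+x(\mathcal T)=\big(x_0+x(\mathcal T)\big)e^{Lt}$, and evaluating at $t=\mathcal T$, where $e^{L\mathcal T}=2$, forces $x_0=0$. So there is no solution when $x_0\neq0$. When $x_0=0$, every $x(t)=c\,(e^{Lt}-1)$, $c\in\mathbb R$, is a solution. Since ${}^c\mathcal D_\Phi^{1/2}\,{}^c\mathcal D_\Phi^{1/2}x=x'$ for smooth $x$ when $\Phi=p$, these functions also solve \eqref{MainSystem}--\eqref{IC} directly. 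The same example refutes the preceding existence theorem you lean on.

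What your estimate does establish:
\begin{itemize}
\item The theorem with $2L$ in place of $L$; consistently, here $2L\mathcal T=2\ln 2>1$.
\item When $\sigma(t)\le t$, existence and uniqueness via the weighted norm, with no smallness condition at all.
\end{itemize}

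Finally, your continuity worry about $x_1\Psi_\beta(t;\Theta)$ should be resolved by correcting \eqref{IntegralForm}, not by setting $x_1=0$. The value of ${}^c\mathcal D_\Phi^\beta x+\lambda x$ at $t=0$ is the constant $x_1+\lambda(0)x_0$. Applying $\mathcal I_\Phi^\beta$ to it gives $(x_1+\lambda(0)x_0)\int_0^t\Psi_\beta(s;\Theta)\,ds$, which is continuous whenever $\Psi_\beta\in L^1(0,\mathcal T)$.
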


\begin{proof}
The proof follows from the Banach contraction principle applied to the nonlinear operator associated with \eqref{IntegralForm}.
\end{proof}

\subsection{Numerical Approximation Scheme}

In this subsection, we develop a predictor--corrector numerical scheme for approximating solutions of the $\mathcal{DMFLS}$ \eqref{MainSystem}. The proposed approach adapts the classical Adams--Bashforth--Moulton methodology to the generator--based dynamic--memory setting. While existing predictor--corrector methods for fractional differential equations \cite{Diethelm2002,Garrappa2015} are mainly designed for prescribed memory kernels, the present formulation accommodates memory effects generated through the $\mathcal{DMG}$.

The numerical approximation of dynamic--memory fractional systems is generally
more challenging than that of classical fractional equations, since the generated
kernels
\[
\Psi_\alpha(t;\Theta)
=
\mathcal L^{-1}
\{
\Phi(p,\Theta)^{-\alpha}
\}(t)
\]
may exhibit singular, nonsingular, tempered, oscillatory, or multiscale memory
behaviors. Consequently, numerical schemes designed for fixed power--law kernels
cannot always be applied directly.

To address this issue, we develop a predictor--corrector approach based on the
generated $\mathcal{DMK}$ rather than on a prescribed memory law. The proposed
scheme extends the classical Adams--Bashforth--Moulton framework to generator--based
$\mathcal{DMFO}$ , allowing different classes of memory effects to be treated
within a unified computational procedure. In contrast to many existing methods,
which often require separate formulations for different kernels, the present approach
naturally accommodates singular, nonsingular, tempered, Mittag--Leffler, hybrid, and
multiscale memory structures while preserving the convolution and semigroup properties
of the continuous model.

Let
\[
0=t_{0}<t_{1}<\cdots<t_{N}=\mathcal{T},
\]
where
\(
t_{n}=nh,
\;
h=\frac{\mathcal{T}}{N}.
\)
Using the equivalent integral representation \eqref{IntegralForm}, we write
\begin{align}
x(t_{n+1})
=
x_{0}
&-
\int_{0}^{t_{n+1}}
\Psi_{\beta}(t_{n+1}-s;\Theta)
\lambda(s)x(s)\,ds
\nonumber\\
&+
\int_{0}^{t_{n+1}}
\Psi_{\alpha+\beta}(t_{n+1}-s;\Theta)
G(s,x(s))\,ds
\nonumber\\
&+
x_{1}\Psi_{\beta}(t_{n+1};\Theta),
\label{NumericalIntegral}
\end{align}
where
\(
G(s,x(s))
=
\mathcal{A}x(s)
+
\mathcal{B}\,x(\sigma(s))
+
\mathcal{F}(s,x(s),x(\sigma(s))).
\)
The predictor approximation is given by
\begin{align}
x_{n+1}^{P}
=
x_{0}
&-
h
\sum_{j=0}^{n}
\Psi_{\beta}(t_{n+1}-t_{j};\Theta)
\lambda(t_{j})x_{j}
\nonumber\\
&+
h
\sum_{j=0}^{n}
\Psi_{\alpha+\beta}(t_{n+1}-t_{j};\Theta)
G(t_{j},x_{j})
\nonumber\\
&+
x_{1}\Psi_{\beta}(t_{n+1};\Theta).
\end{align}
The corrected approximation is computed by
\begin{align}
x_{n+1}
=
x_{0}
&-
\frac{h}{2}
\sum_{j=0}^{n+1}
a_{j,n+1}^{(\beta)}
\Psi_{\beta}(t_{n+1}-t_{j};\Theta)
\lambda(t_{j})x_{j}
\nonumber\\
&+
\frac{h}{2}
\sum_{j=0}^{n+1}
a_{j,n+1}^{(\alpha+\beta)}
\Psi_{\alpha+\beta}(t_{n+1}-t_{j};\Theta)
G(t_{j},x_{j})
\nonumber\\
&+
x_{1}\Psi_{\beta}(t_{n+1};\Theta),
\end{align}
where
\(
a_{j,n+1}^{(\gamma)}
\)
denote the Adams--Moulton correction coefficients corresponding to the generated memory kernels.

The numerical approximation is computed iteratively. At each step, the predictor value
\(
x_{n+1}^{P}
\)
is obtained from previously computed states, while the corrected value incorporates updated information through the weighted Adams--Moulton coefficients. Since the coefficients
\(
a_{j,n+1}^{(\gamma)}
\)
depend on the generated kernels, they reflect the memory behavior determined by the $\mathcal{DMG}$.

Therefore, the numerical scheme adapts to the selected memory generator and can be applied to systems involving heterogeneous, adaptive, or crossover memory effects.


\begin{remark}
Unlike many existing predictor--corrector methods designed for prescribed memory
kernels, the proposed approach is driven by the generated memory structure induced
by the $\mathcal{DMG}$. As a result, the same numerical framework can treat diverse
memory behaviors within a unified computational setting, without requiring
kernel--specific reformulations.
\end{remark}
\begin{remark}
A rigorous analysis of the convergence and stability of the proposed predictor--corrector scheme depends on the admissibility properties of the generated kernels and the regularity assumptions imposed on the $\mathcal{DMG}$. The investigation of these properties is left for future work.
\end{remark}

\subsection{Recovery and Generation of Existing Fractional Langevin Models}
The proposed $\mathcal{DMFF}$ can recover several existing fractional Langevin equations as special cases through appropriate choices of the $\mathcal{DMG}$ $\Phi$. It also allows the construction of more general memory--dependent Langevin systems, including models involving classical, tempered, nonsingular, Mittag--Leffler, and hybrid memory effects, within a common framework.

To illustrate this mechanism, we consider several representative choices of the $\mathcal{DMG}$ and examine the corresponding reductions of the proposed  $\mathcal{DMFLS}$. These examples demonstrate how different memory generators induce distinct hereditary laws and therefore produce different classes of Langevin dynamics while preserving a common analytical structure.


\begin{example}[Recovery of the classical fractional Langevin model]

Consider the $\mathcal{DMG}$

\[
\Phi(p,\Theta)=p.
\]
Then the generated $\mathcal{DMK}$ becomes

\[
\Psi_\alpha(t;\Theta)
=
\mathcal L^{-1}
\{
p^{-\alpha}
\}(t)
=
\frac{t^{\alpha-1}}
{\Gamma(\alpha)}.
\]
Hence, the $\mathcal{DMFI}$ reduces to the classical
Riemann--Liouville fractional integral

\[
(\mathcal I_\Phi^\alpha x)(t)
=
\frac1{\Gamma(\alpha)}
\int_0^t
(t-s)^{\alpha-1}
x(s)\,ds,
\]
while the associated $\mathcal{DMFD}$ recovers the standard
Riemann--Liouville and Caputo fractional derivatives.
Consequently, the proposed $\mathcal{DMFLS}$ recovers classical fractional Langevin dynamics with power--law hereditary effects; see \cite{Kou2008}.

\end{example}


\begin{example}[Recovery of exponentially tempered Langevin models]

Let

\[
\Phi(p,\Theta)
=
p+\lambda,
\;\;
\lambda>0.
\]
Then
\[
\Psi_\alpha(t;\Theta)
=
\mathcal L^{-1}
\{
(p+\lambda)^{-\alpha}
\}(t)
=
\frac{
e^{-\lambda t}
t^{\alpha-1}
}{
\Gamma(\alpha)
}.
\]
Therefore,
\[
(\mathcal I_\Phi^\alpha x)(t)
=
\frac1{\Gamma(\alpha)}
\int_0^t
e^{-\lambda(t-s)}
(t-s)^{\alpha-1}
x(s)\,ds.
\]
Thus, the resulting Langevin model exhibits exponentially attenuated hereditary effects, similar to those arising in generalized and tempered fractional Langevin dynamics; see \cite{Deng2009}.

\end{example}


\begin{example}[Recovery of generalized Mittag--Leffler memory]

Suppose that the $\mathcal{DMG}$ generates kernels satisfying

\[
\Psi_\alpha(t;\Theta)
=
t^{\alpha-1}
E_{\mu,\alpha}
(-\lambda t^\mu),
\]
where
\(
E_{\mu,\alpha}
\)
denotes the two--parameter Mittag--Leffler function.
Then the corresponding dynamic--memory operators generate
nonlocal interactions involving crossover and multiscale
memory effects.

Consequently, the resulting $\mathcal{DMFLS}$
extends fractional Langevin models governed by Mittag--Leffler hereditary structures; see \cite{Wang2018}.

\end{example}


The above examples demonstrate that different selections of the   $\mathcal{DMG}$ lead naturally to distinct classes of fractional Langevin models,
including classical power--law memory, exponentially tempered memory,
and generalized crossover hereditary effects. Therefore, the proposed
$\mathcal{DMFF}$ provides a unified mechanism for constructing and analyzing broad
families of fractional Langevin systems through the structure of the
generated memory kernel itself.



\section{Conclusion}
This work developed a generator--based dynamic--memory framework for fractional calculus in which hereditary effects are generated through a dynamic memory generator rather than prescribed by fixed kernels. Unlike many existing generalized fractional formulations, the proposed approach provides a unified mechanism for constructing broad classes of memory--dependent operators while preserving a consistent analytical structure.

Within this setting, generalized dynamic--memory fractional integrals together with Riemann--Liouville and $\mathcal{C\;DMFD}$ were introduced through inverse Laplace representations. Several analytical and operational properties were established, including admissibility conditions, semigroup structures, inverse relations, composition formulas, boundedness results, and Laplace operational representations. The framework also led naturally to generalized dynamic--memory Mittag--Leffler functions associated with the underlying generators.

Explicit formulas for the dynamic--memory fractional derivatives of power and polynomial functions were further derived. These results extend classical fractional power laws through generated memory kernels and provide useful tools for consistency analysis, approximation techniques, predictor--corrector schemes, and the development of numerical methods adapted to heterogeneous memory structures.

Another important feature of the developed theory is its ability to recover numerous classical and modern fractional operators through suitable choices of the dynamic memory generator. Consequently, singular, nonsingular, tempered, Mittag--Leffler, Sonine--type, and other hereditary behaviors can be described within the same analytical framework.

The applicability of the proposed approach was illustrated through a nonlinear dynamic--memory fractional Langevin system involving coupled memory effects, nonlinear interactions, and generalized hereditary behavior. The obtained formulation shows that different memory mechanisms, including crossover and multiscale effects, may be incorporated within a single symbolic structure without requiring separate kernel constructions.

Several directions remain open for future investigation. These include stochastic dynamic--memory models, variable--order and adaptive generators, numerical methods for generator--dependent operators, fractional optimal control, and partial differential equations involving dynamic memory effects. Further studies may also address spectral analysis, inverse problems, admissibility theory, and applications to anomalous diffusion, viscoelasticity, and biological systems.

\section*{Declarations}

\textbf{Funding}
The author received no specific funding for this work.\\
\textbf{Conflict of interest}
The author declares that he has no conflict of interest.\\
\textbf{Data availability}
No datasets were generated or analyzed during the current study.\\
\textbf{Ethics approval}
This article does not contain any studies involving human participants or animals performed by any of the authors.\\
\textbf{Use of artificial intelligence tools}
The author declares that no artificial intelligence tools were used in the development of the mathematical results, proofs, or scientific conclusions presented in this manuscript. 
\\
\textbf{Acknowledgment} The author would like to thank Prince Sultan University and Ostim Technical University for their help and support.


\end{document}